\documentclass[reqno,11pt,letterpaper]{amsart}
\usepackage[mathscr]{eucal}
\usepackage{amsmath}	
\usepackage{amssymb}	
\usepackage{amsthm}
\usepackage{booktabs}
\usepackage{amsfonts}
\usepackage{latexsym}
\usepackage{mathrsfs}
\usepackage{tabularx}
\usepackage{pict2e}
\usepackage{nicematrix}
\usepackage{float}   
\usepackage{graphicx}
\usepackage[all]{xy}
\usepackage{tikz-cd}
\usepackage{tikz}
\usetikzlibrary{matrix,arrows}
\usepackage[colorlinks=true, citecolor=blue, urlcolor=blue, linkcolor=blue]{hyperref}
            
\theoremstyle{plain} 
\newtheorem{thm}{Theorem}[section]
\newtheorem{lem}[thm]{Lemma} 
\newtheorem{prop}[thm]{Proposition} 
\newtheorem{cor}[thm]{Corollary}

\newtheorem*{mainconj}{Conjecture A}
\newtheorem{quest}[thm]{Question}
\newtheorem*{mainquest}{Question B}
\newtheorem*{mainquest2}{Question C}
\theoremstyle{definition} 
\newtheorem{defn}[thm]{Definition}
\newtheorem{setup}[thm]{Set-up}
\newtheorem{rem}[thm]{Remark} 
\newtheorem{ex}[thm]{Example}

\makeatletter
\@namedef{subjclassname@2020}{%
  \textup{2020} Mathematics Subject Classification}
\makeatother

\makeatletter
\def\@tocline#1#2#3#4#5#6#7{\relax
  \ifnum #1>\c@tocdepth % then omit
  \else
    \par \addpenalty\@secpenalty\addvspace{#2}%
    \begingroup \hyphenpenalty\@M
    \@ifempty{#4}{%
      \@tempdima\csname r@tocindent\number#1\endcsname\relax
    }{%
      \@tempdima#4\relax
    }%
    \parindent\z@ \leftskip#3\relax \advance\leftskip\@tempdima\relax
    \rightskip\@pnumwidth plus4em \parfillskip-\@pnumwidth
    #5\leavevmode\hskip-\@tempdima
      \ifcase #1
       \or\or \hskip 1em \or \hskip 2em \else \hskip 3em \fi%
      #6\nobreak\relax
    \dotfill\hbox to\@pnumwidth{\@tocpagenum{#7}}\par
    \nobreak
    \endgroup
  \fi}
\makeatother
\newcommand{\NN}{\mathbb{N}}
\newcommand{\ZN}{\mathbb{Z}}

\newcommand{\RN}{\mathbb{R}}
\newcommand{\CN}{\mathbb{C}}
\newcommand{\PB}{\mathbb{P}}
\newcommand{\CO}{\mathcal{O}}
\newcommand{\CA}{\mathcal{A}}
\newcommand{\CB}{\mathcal{B}}
\newcommand{\RA}{\mathscr{A}}
\newcommand{\RB}{\mathscr{B}}
\newcommand{\RT}{\mathscr{T}}
\newcommand{\DC}{\mathrm{D}^{{\rm b}}}
\newcommand{\Hom}{\mathrm{Hom}}
\newcommand{\Ext}{\mathrm{Ext}}
\newcommand{\ext}{\mathrm{ext}}

\newcommand{\RHom}{\mathrm{RHom}}
\newcommand{\RCH}{R\mathcal{H}om}
\newcommand{\CExt}{\mathcal{E}xt}
\newcommand{\HH}{\mathrm{HH}}
\newcommand{\NHH}{\mathrm{NHH}}
\newcommand{\ph}{\mathrm{ph}}
\newcommand{\phac}{\mathrm{ph_{ac}}}
\newcommand{\Pic}{\mathrm{Pic}}

\allowdisplaybreaks
\numberwithin{equation}{section}
\begin{document}

\title{Echoes of phantoms on rational surfaces}

\author{Shihao Ma}
\address{Center for Applied Mathematics and KL-AAGDM, Tianjin University, Weijin Road 92, Tianjin 300072, P. R. China}%
\email{shma@tju.edu.cn}%

\author{Yirui Xiong}
\address{School of Sciences, Southwest Petroleum University, Chengdu 610500, P. R. China}
\email{yiruimee@gmail.com}%

\author{Song Yang}
\address{Center for Applied Mathematics and KL-AAGDM, Tianjin University, Weijin Road 92, Tianjin 300072, P. R. China}%
\email{syangmath@tju.edu.cn}%

\begin{abstract}
We construct the first countably infinite family of new universal phantom categories on a smooth rational surface, namely the blow-up of the complex projective plane at ten points in general position. 
Moreover, these phantom categories are pairwise non-equivalent, are not equivalent to any previously known phantom category on a smooth rational surface, and arise as the orthogonal complements of non-full exceptional collections of line bundles of maximal length.
As an application, we show that all of these phantom categories admit bounded $t$-structures.
\end{abstract}

\date{\today}

\subjclass[2020]{Primary  14F08; Secondary 14J26, 18G80}
\keywords{Derived category of coherent sheaves, Semi-orthogonal decomposition, exceptional collection, phantom category}

\maketitle

\setcounter{tocdepth}{1}
\tableofcontents

%=====================================================================

\section{Introduction}
Let $X$ be a smooth complex projective variety. 
We denote by $\DC(X)$ the bounded derived category of coherent sheaves on $X$.
A non-trivial admissible subcategory $\mathscr{A}$ of $\DC(X)$ is called a {\it quasi-phantom} if the Hochschild homology of $\mathscr{A}$ vanishes and its Grothendieck group is finite; moreover, if its Grothendieck group vanishes, it is called a {\it phantom}.
The examples of quasi-phantom subcategories were first constructed on some surfaces of general type by B\"{o}hning--Graf von Bothmer--Sosna \cite{BGvBS13} on the classical Godeaux surface, by Alexeev--Orlov \cite{AO13} on Burniat surfaces, by  Galkin--Shinder \cite{GS13} on Beauville surface, and by Galkin--Katzarkov--Mellit--Shinder \cite{GKMS15} on Keum's fake projective planes.
The first examples of phantom categories were constructed on products of surfaces of general type with quasi-phantom subcategories by Gorchinskiy--Orlov \cite{GO13} and on determinantal Barlow surfaces by B\"{o}hning--Graf von Bothmer--Katzarkov--Sosna \cite{BGvBKS15}. 
Later, phantom categories exist on some Dolgachev surfaces constructed by Cho--Lee \cite{CL18} and Karzhemanov--Katzarkov \cite{KK23}. 
For the existence of phantom categories on smooth rational surfaces,
it was proved by Pirozhkov \cite{Pir23} that every del Pezzo surface has no phantom categories. 
The first example of phantom categories on smooth rational surfaces was constructed by Krah \cite{Kra24}. 
In \cite{BK25}, Borisov--Kemboi proved that the blow-ups of the complex projective plane $\PB^{2}$ at points lying in general position on a smooth cubic curve do not contain phantom categories.
Recently, some new phantom categories were constructed on the blow-up of $\PB^{2}$ at $11$ points in general position \cite{KKL+26,MXY25} and on the blow-up of the second Hirzebruch surface $\mathbf{F}_{2}$ at $9$ points in general position \cite{KKL+26}.
In \cite{MXY25}, the authors also proved that every smooth projective surface with an effective smooth anti-canonical divisor has no phantoms, e.g. weak del Pezzo surfaces.

At present, all known examples of phantom categories on smooth rational surfaces arise on the blow-up of $\PB^{2}$ at $10$ and $11$ points in general position and on the blow-up of $\mathbf{F}_{2}$ at $9$ points in general position.
It remains an open problem whether one can construct further phantom categories on smooth rational surfaces beyond the currently known examples.
As a matter of fact, it was predicted by Kemboi et al. in \cite[Conjecture 4.11]{KKL+26} that the blow-up of the Hirzebruch surface $\mathbf{F}_{n}$ at $6+\max\{3,n\}$ points in general position has a phantom subcategory $\mathcal{C}_{n}$ orthogonal to an exceptional collection of line bundles of maximal length and $\mathcal{C}_{n} \ncong \mathcal{C}_{m}$ ($n\neq m$). 
If true, this would yield countably infinitely many pairwise non-equivalent phantom categories on smooth rational surfaces. 
In loc. cit., this conjecture has been proved for $n=2$. Since ${\rm Bl}_{2\, {\rm pts}}\, \PB^{2} \cong {\rm Bl}_{1\, {\rm pt}}\, \mathbf{F}_{0}$ and ${\rm Bl}_{1\,{\rm pt}}\, \PB^{2} \cong \mathbf{F}_{1}$, Krah's phantom yields the existence part of this conjecture for $n=0,1$.
However, the case $\mathbf{F}_{n}$ ($n\geq 3$) remains widely open; moreover, even for the case of $\mathbf{F}_{0}$, $\mathbf{F}_{1}$ and $\mathbf{F}_{2}$, the possible existence of new phantoms is still open.
This suggests the following natural conjectural expectation:

\begin{mainconj}\label{conjmain}
There exists a smooth rational surface containing countably infinitely many pairwise non-equivalent phantom categories.
\end{mainconj}

The purpose of this paper is to confirm this expectation by constructing countably infinitely many new phantoms on the blow-up of $\PB^{2}$ at ten points in general position. 
The main idea is to combine mutations with Krah's approach, providing a new perspective on the construction of phantom categories.
Let $X$ be the blow-up of $\PB^{2}$ at $10$ points $p_{1},\cdots,p_{10}$ in general position.
Let $H$ be the pullback to $X$ of the hyperplane class on $\mathbb{P}^{2}$, $E_{i}$ be the exceptional divisors over the point $p_{i}$, $1\leq i\leq 10$, and $K_{X}=-3H+\sum_{i=1}^{10} E_{i}$ be the canonical divisor. 
Consider the involution of the Picard group of $X$,
\begin{equation}\label{involution-Pic}
\begin{array}{cccl}
\iota:& \mathrm{Pic}(X) & \longrightarrow  & \mathrm{Pic}(X)\\
&D&\longmapsto &
-D-2 (D\ldotp K_{X}) K_{X}.
\end{array}
\end{equation}
Then, the involution \eqref{involution-Pic} is linear, $\iota(K_{X})=K_{X}$ and $\chi(D)=\chi(\iota(D))$.
In \cite{Kra24}, Krah applied the involution  \eqref{involution-Pic} to the standard full exceptional collection of line bundles
\begin{equation}\label{standard-FEC-10pts}
\{\CO_{X}, \CO_{X}(E_{1}), \cdots, \CO_{X}(E_{10}), \CO_{X}(H), \CO_{X}(2H)\}
\end{equation}
and obtained a non-full exceptional collection, providing the first example of a phantom category on a smooth rational surface. 
Mutations are a fundamental tool for constructing new full exceptional collections. 
Among them, one of the most basic mutations is the mutation by Serre functor.  
Starting from \eqref{standard-FEC-10pts}, taking mutation by Serre functor and then applying the involution \eqref{involution-Pic}, the resulting phantom category is equivalent to Krah's phantom category.
This motivates the following:

\begin{mainquest}\label{main-motivation}
Does every full exceptional collection of line bundles that is mutation-equivalent to \eqref{standard-FEC-10pts} necessarily give rise to a phantom category?
If so, are the phantom categories orthogonal to the resulting non-full exceptional collections mutually equivalent?
\end{mainquest}

For every integer $a\geq 0$, we take the full exceptional collection
\begin{align}\label{FEC-10points}
 &\{
\CO_{X},\CO_{X}(H-E_{1}-E_{2}),\CO_{X}(E_{3}),\cdots,\CO_{X}(E_{10}), \CO_{X}(H-E_{1}), \nonumber \\ 
 & \;\;\; \CO_{X}((a+1)H-aE_{1}-E_{2}),\CO_{X}((a+2)H-(a+1)E_{1}-E_{2})
\}.     
\end{align}
We note that, by Remark \ref{Ori-FEC-mut-equ-standard}, the full exceptional collection \eqref{FEC-10points} is mutation-equivalent to the full exceptional collection \eqref{standard-FEC-10pts}.
Then, for each integer $a\geq 0$, applying the involution \eqref{involution-Pic} to \eqref{FEC-10points}, 
we obtain the following divisors:
$$
\begin{array}{r@{\hspace{3pt}}lr@{\hspace{3pt}}lcc}
 A:=&2K_{X}-H+E_{1}+E_{2}, & B_{j}:=&2K_{X}-E_{j},  \\
G:=&4K_{X}-H+E_{1},
& 
F_{a}:=&4(a+1)K_{X}-(a+1)H+aE_{1}+E_{2},
\end{array}
$$
where $3\leq j\leq 10$.

The main result of the present paper is stated as follows:

\begin{thm}\label{mainthm} 
Let $X$ be the blow-up of $\PB^{2}$ at $10$ points in general position.
Then, for every integer $a \geq 0$, the sequence 
$$
\{\CO_{X},\CO_{X}(A),\CO_{X}(B_{3}),\cdots,\CO_{X}(B_{10}),\CO_{X}(G),\CO_{X}(F_{a}),\CO_{X}(F_{a+1})\}
$$
is a non-full exceptional collection of line bundles of maximal length. 
In particular, its right orthogonal complement $\RA_{X}^{(a)}$ is a universal phantom category.
\end{thm}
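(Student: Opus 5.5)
We reduce exceptionality to cohomology vanishing for differences of divisors, prove the vanishing via Riemann--Roch together with effectivity arguments on a general blow-up, and then obtain non-fullness and the phantom property from invariants of $\DC(X)$.

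\textbf{Step 1: reduction to Euler characteristics.} For line bundles, the sequence is exceptional if and only if $H^{k}(X,\CO_{X}(D_{i}-D_{j}))=0$ for all $k$ whenever $i>j$. Write $L_{1},\dots,L_{13}$ for the divisors of \eqref{FEC-10points}, so that $\iota(L_{i})=D_{i}$. Since \eqref{FEC-10points} is a full exceptional collection, $\chi(L_{i}-L_{j})=0$ for $i>j$. As $\iota$ is linear and preserves $\chi$, we get $\chi(D_{i}-D_{j})=0$ as well.

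\textbf{Step 2: reduction to $h^{0}$ and $h^{2}$.} Because $\chi(D_{i}-D_{j})=0$, it suffices to show $h^{0}(D_{i}-D_{j})=0$ and $h^{2}(D_{i}-D_{j})=h^{0}(K_{X}-D_{i}+D_{j})=0$. The vanishing of $h^{1}$ then follows from $\chi=0$.

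\textbf{Step 3: the vanishing computations.} For each pair we write $D=D_{i}-D_{j}$ explicitly in the form $mK_{X}+(\text{small class})$. Using $K_{X}^{2}=-1$, these differences have the shapes
\[
2K_{X}-H+E_{1}+E_{2},\quad 2K_{X}-E_{j},\quad E_{j}-E_{k},\quad 2K_{X}+E_{j}-H+E_{1}+E_{2},
\]
and so on, together with the $a$-dependent families
\[
F_{a+1}-F_{a}=4K_{X}-H+E_{1},\qquad F_{a}-G=4aK_{X}-aH+(a-1)E_{1}+E_{2},
\]
and similar. For the non-effectivity of $D$ and $K_{X}-D$ we use three tools.
\begin{itemize}
\item Intersection with the nef class $-K_{X}+\epsilon$-type classes. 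Since $X$ is a blow-up of $10$ general points, $-K_{X}$ is not effective, but $H$, $H-E_{i}$ and $-2K_{X}+\dots$ are useful test classes. Pairing $D$ with an effective nef class such as $H$ or $3H-\sum_{i\neq k}E_{i}$ (the pencil through $9$ points, which is nef) gives a negative number, so $D$ is not effective.
\item Specialization and semicontinuity for the remaining classes. We place the $10$ points on a cubic when convenient, or use the known fact that for $10$ general points the classes $mK_{X}+\text{small}$ with $m\ge 1$ are not effective. Non-effectivity is an open condition, so one good specialization suffices.
\item For $K_{X}-D$, which typically has the form $-(m-1)K_{X}+\dots$ with $m\ge 2$, a Cremona-type or degree argument: apply the $W(E_{10})$ action and check that the class does not reduce to an effective one, as in Krah's argument.
\end{itemize}

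\textbf{Step 4: non-fullness and the phantom property.} The collection has $13=\operatorname{rk}K_{0}(X)$ members, so its orthogonal complement $\RA_{X}^{(a)}$ has $K_{0}=0$ and $\HH_{\bullet}=0$ by additivity. To see that $\RA_{X}^{(a)}\neq 0$, we use Krah's criterion: the collection is not full because its image under $\iota$ cannot be obtained from a full collection. Concretely, we check that the height, or the pseudo-height, of the collection is at least $2$. Alternatively, we argue that a full exceptional collection would generate, so $\CO_{X}(K_{X})$ would lie in its span, and compare with the Hochschild cohomology obstruction (Kuznetsov's height argument) as in \cite{Kra24}. Universality follows because $\RA_{X}^{(a)}\boxtimes \DC(Y)$ is the complement of a collection of maximal length in $\DC(X\times Y)$.

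\textbf{Main obstacle.} I expect the main obstacle to be the uniform-in-$a$ vanishing of $h^{0}$ for the family $F_{a}-G$ and $F_{a}-A$, $F_{a}-B_{j}$, and of their Serre duals. To handle it, I would first try writing each class as $4aK_{X}+(\text{fixed class})$ and pairing with $-K_{X}$ restricted to the smooth elliptic curve $C$ through $9$ of the points. On $C$, $\CO_{X}(-K_{X})|_{C}$ is a non-torsion degree-one bundle for general points, which gives $h^{0}=0$ by induction on $a$ via the restriction sequence $0\to \CO_{X}(D-C)\to \CO_{X}(D)\to \CO_{X}(D)|_{C}\to 0$.
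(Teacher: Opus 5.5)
Steps 1, 2 and 4 follow the paper's overall structure. Step 3, however, is the actual content of the theorem, and it has a genuine gap.

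\textbf{A sign slip first.} Exceptionality requires $H^{\bullet}(D_j-D_i)=0$ for $i>j$, and it is $\chi(L_j-L_i)$ that vanishes for $i>j$, not $\chi(L_i-L_j)$. The sign has real consequences. As written, your Step 2 would require $h^0(K_X-A)=0$. But $K_X-A=4H-2E_1-2E_2-\sum_{i\geq 3}E_i$ is effective: $h^0=1$, and it is one of the nonzero $\Ext^2$'s. Once the sign is fixed, all $h^2$-vanishings are trivial by negative $H$-degree. What must actually be proved is $h^0=0$ for
\[
-A,\ -B_j,\ -G,\ A-G,\ B_j-G,\ -F_a,\ A-F_a,\ B_j-F_a,\ G-F_a.
\]
These are classes $-mK_X+(\text{small})$ with $m$ growing linearly in $a$, for example $-F_a=(13a+13)H-(5a+4)E_1-(4a+5)E_2-(4a+4)\sum_{i\geq 3}E_i$. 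The ``known fact'' you quote concerns $mK_X+(\text{small})$ with $m\geq 1$; that is trivial and is not what is needed.

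\textbf{None of your tools proves these vanishings.}

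(i) Nef test classes cannot work. We have $(-G)^2=0$ and $(-F_a)^2=2a$, with positive $H$-degree. So these classes lie in the closure of the positive cone and are pseudo-effective; for $a\geq 1$ they are big, as are $A-F_a$ and $B_j-F_a$. Hence every nef class meets them non-negatively.

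(ii) Putting the points on a cubic goes the wrong way. There $-K_X$ is effective, so $-F_a=4(a+1)(-K_X)+a(H-E_1)+(H-E_2)$ is effective. This is consistent with \cite{BK25}. ``One good specialization suffices'' is true, but producing that specialization is the whole problem.

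(iii) The Weyl group action does nothing here. Both $-G=13H-5E_1-4\sum_{i\geq 2}E_i$ and $-F_a$ are already Cremona-reduced ($m_1+m_2+m_3=d$). Deciding their emptiness is exactly the SHGH conjecture. Krah and \cite{KKL+26} can invoke it only for bounded multiplicities \cite{CM11,DJ07}, which gives nothing uniform in $a$.

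(iv) The elliptic-curve induction fails. First, $K_X\cdot C_k=0$, so $-K_X|_{C_k}$ has degree $0$, not $1$. More importantly, the relevant classes have positive degree on every $C_k$: for instance $-F_a\cdot C_k\geq 2a+2$, and $(G-F_a)\cdot C_k=2a$ for $k\geq 3$. So $H^0(\CO_{C_k}(D))\neq 0$, and the restriction sequence only gives $h^0(D)\leq h^0(D-C_k)+D\cdot C_k$, never a vanishing.

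\textbf{What is missing} is an unconditional emptiness result valid for all $a$. The paper proves that $\mathcal{L}(13a+4,5a+3,(4a+1)^9)$ and $\mathcal{L}(13a+6,5a+1,(4a+2)^9)$ are empty. It does this with Dumnicki's diagram-cutting method, using an explicit ten-piece affine partition of $\Delta_d$ in which each piece is covered by lines $\ell_{i,k}$ with $\#(\mathrm{D}_i\cap\ell_{i,k})\leq k$. It then reaches every needed class by Cremona transformations and permutations. For example, $-F_a$ is the second family of Corollary~\ref{10pts-NoSect-case1-cor} with $a$ replaced by $a+1$. The curves $C_k$ appear in the paper only later, to compute the nonzero $\Ext^2$'s. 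Even there, the needed input $h^0(D-C_k)=0$ comes from diagram-cutting.

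\textbf{The rest is fine in outline.} Non-fullness via pseudoheight is what the paper does. You should state its input, namely the elementary forward vanishing $\Hom(L_i,L_j)=0$ for $i<j$. This gives $e(L_i,L_j)\geq 1$, hence pseudoheight at least $2$.

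For universality, ``maximal length'' alone does not force $K_0(\RA_X^{(a)}\boxtimes\DC(Y))=0$ when $K_0(Y)$ is infinitely generated. You need that the $[L_i]$ form a $\ZN$-basis of $K_0(X)$, together with K\"{u}nneth for $K_0(X\times Y)$, or else the Gorchinskiy--Orlov criterion the paper uses.
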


The phantom categories in Theorem \ref{mainthm} corresponding to different integers $a$ are pairwise non-equivalent and are not equivalent to any previously known phantom categories on smooth rational surfaces.
More precisely, we have:

\begin{thm}\label{distinct-phantoms-thm}
For any distinct integers $a,b\geq 0$, the phantom categories $\RA_{X}^{(a)}$ and $\RA_{X}^{(b)}$ are 
neither equivalent to each other nor to any of the phantom categories constructed
in \cite{Kra24,KKL+26,MXY25}.
\end{thm}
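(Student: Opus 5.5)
We need an invariant of phantom categories that separates the $\RA_{X}^{(a)}$ from each other and from the known phantoms. The natural choice is the pseudo-height, or more precisely the normalized Hochschild cohomology $\NHH^{\bullet}$ of the gluing data, following Kuznetsov's theory of heights. The phantoms are all admissible subcategories of various surfaces $X'$. An equivalence $\RA\simeq\RA'$ of such subcategories (enhanced, as all are, being admissible in $\DC$ of smooth projective varieties) preserves every intrinsic invariant, in particular the Hochschild cohomology $\HH^{\bullet}(\RA)$. Moreover, for a surface with $\RA^{\perp}$ generated by an exceptional collection $E_{1},\dots,E_{n}$, Kuznetsov's spectral sequence computes $\HH^{\bullet}(\RA)$ from $\HH^{\bullet}(X)$ and the $\Ext$ groups between the $E_{i}$ and their Serre twists. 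So my plan is as follows.

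\textbf{Step 1.} Show $\HH^{\bullet}(\RA_{X}^{(a)})\cong\HH^{\bullet}(X)$ in low degrees, or at least obtain a description of it. I expect this is too coarse, since all these surfaces have the same $\HH^{\bullet}$. I will therefore instead use a finer invariant that depends on the category itself.

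\textbf{Step 2 (what I would actually use).} Use the pseudo-height $\ph(\RA)$, or the set of heights of the collection, and more decisively the dimension of $\HH^{2}(\RA)$, i.e.\ the deformation space. Kuznetsov showed that if the height $h$ of the exceptional collection exceeds some bound, the restriction $\HH^{k}(X)\to\HH^{k}(\RA)$ is an isomorphism for $k<h-1$ and injective for $k=h-1$. So the distinguishing data must come from degrees near the height.

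Concretely, I would compute $\NHH^{\bullet}$ of the collection using the formula
\[
\NHH^{\bullet}(E_{a_0},\dots,E_{a_p})=\Ext^{\bullet}(E_{a_1},E_{a_0})\otimes\cdots\otimes\Ext^{\bullet}(E_{a_0},S^{-1}E_{a_p})[-p].
\]
All objects are line bundles $\CO(D)$, so each factor is $H^{\bullet}(\CO(D'-D))$ or $H^{\bullet}(\CO(D'-D-K_X))$. These cohomologies are controlled by $\chi$, which equals $\chi$ of the involuted classes, together with vanishing statements already established in proving Theorem~\ref{mainthm}. The $a$-dependence enters through $F_{a}$ and $F_{a+1}$: for example, $F_{a}-K_X$ relative to $\CO_X$ has $\chi$ growing quadratically in $a$. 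This should give the first degree where $\HH^{\bullet}(\RA^{(a)})$ differs from $\HH^{\bullet}(X)$, with a dimension depending on $a$. I would record it as a dimension $d(a)$ of $\HH^{k}(\RA_{X}^{(a)})$ for a specific $k$, and show that $d$ is strictly monotone.

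\textbf{Step 3.} Compute the same invariant for Krah's phantom and for the phantoms of \cite{KKL+26,MXY25}. The latter live on different surfaces ($\mathrm{Bl}_{11}\PB^{2}$, $\mathrm{Bl}_{9}\mathbf{F}_{2}$), and their $\HH^{\bullet}(X')$ differ, e.g.\ $\HH^{2}$ contains $H^{1}(T_{X'})$ of dimension $2\cdot 11-8=14$ versus $12$ here. Low-degree $\HH^{\bullet}$ of the phantom agrees with that of $X'$ by Kuznetsov's comparison, so these are distinguished immediately, provided the heights are large enough that the comparison holds in degree 2. The $10$-point phantoms of \cite{Kra24,KKL+26} are separated from $\RA^{(a)}$ by comparing $d(a)$ with the corresponding numbers for those collections.

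The main obstacle is Step 2: computing the relevant cohomology of the differences $F_{a}-G-K_X$ and the like exactly. It requires showing $h^{0}$ or $h^{2}$ vanishing for classes in general position, i.e.\ a genericity argument, plausibly via Cremona transformations reducing to known $(-1)$-curve or Harbourne--Hirschowitz cases. It also requires controlling the differentials in the spectral sequence. If the spectral sequence degenerates for degree reasons (the first nonzero $\NHH$ term appears in a single isolated degree), the dimension is read off directly, and that is what I would aim to prove first.
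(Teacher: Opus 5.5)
Your overall strategy (separate the categories by $\dim\HH^{k}$, computed from the triangle $\NHH^{\bullet}(\langle\mathbb{A}_{a}\rangle,X)\to\HH^{\bullet}(X)\to\HH^{\bullet}(\RA_{X}^{(a)})$ and Kuznetsov's spectral sequence) is the one the paper uses. Two small corrections first. Height and pseudoheight are invariants of the collection inside $\DC(X)$, not of the abstract category, so only $\HH^{\bullet}(\RA)$ is usable. Also, the $E_{1}$-term is $\Ext(E_{a_0},E_{a_1})\otimes\cdots\otimes\Ext(E_{a_p},\mathcal{S}^{-1}E_{a_0})$, not the reversed product you wrote.

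The real gap is that what you defer as ``the main obstacle'' is essentially the whole proof, and the route you suggest for it does not work. One needs that for $i<j$ the groups $\Ext^{\bullet}(L_i,L_j)$ are concentrated in degree $2$ and $\Ext^{\bullet}(L_j,L_i\otimes\omega_X^{-1})$ in degree $1$, together with $H^{\bullet}(-K_X)=0$ (Propositions \ref{10pts-ext2-main}, \ref{10pts-ext1-main}, Remark \ref{no-section-anticanonical}). Then $E_1^{-p,q}\neq0$ only for $q=2p+3$, the spectral sequence degenerates at $E_1$, and $\HH^{3}(\RA_X^{(a)})\cong\NHH^{4}\cong E_1^{-1,5}$. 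With $\chi_{ij}:=\chi(L_j\otimes L_i^{-1})$, its dimension is $\sum_{i<j}\chi_{ij}(\chi_{ij}+1)=88a^2+220a+206$.

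The concentration itself needs two inputs you do not supply:
\begin{enumerate}
\item $h^{1}=0$ for forward classes such as $K_X-F_a$. The paper gets this by restricting to the elliptic curves $C_k=3H-\sum_{i\neq k}E_i$ (Lemma \ref{cubic-criterion}), once the relevant residual class minus $C_k$ is shown to have no sections.
\item Non-effectivity of classes such as $A-K_X-F_a=(13a+9)H-(5a+2)E_1-\sum_{i=2}^{10}(4a+3)E_i$.
\end{enumerate}
Cremona reduction to $(-1)$-curves or known Harbourne--Hirschowitz cases cannot give these. The systems are, or reduce to, standard ones ($m_1+m_2+m_3\le d$) whose multiplicities grow with $a$, which is exactly where SHGH is open. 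This is why the paper needs Dumnicki's diagram-cutting (Propositions \ref{10pts-NoSect-case1}, \ref{10pts-NoSect-case2}). Without some such input you control neither the differentials nor $d(a)$.

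Your Step 3 also rests on a false claim. $\mathrm{Bl}_9\mathbf{F}_2$ has $K^2=-1$ and Picard rank $11$, just like $X$. Since $h^0(T)=h^2(T)=0$ for general points, $h^{1}(T)=-\chi(T)=10-2K^{2}=12$, and $h^{0}(-K)=0$. Hence $\dim\HH^{2}(\mathrm{Bl}_9\mathbf{F}_2)=12$, not $14$, and only the eleven-point phantoms of \cite{KKL+26,MXY25} are separated by $\HH^{2}$.

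The $\mathbf{F}_2$ phantom must therefore be compared in degree $3$. The value the paper quotes for it, $998$, equals $88\cdot4+220\cdot2+206=\dim\HH^{3}(\RA_X^{(2)})$. So comparing $d(a)$ with the known numbers does not separate $\RA_X^{(2)}$ from that phantom, and a further invariant would be needed. The paper's deduction from Proposition \ref{HH-A-phantom} does not address this coincidence either.

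Finally, the numbers for the other phantoms (e.g.\ $446$ for Krah's) come from the literature and from computer calculations. ``Computing the same invariant'' for them yourself would require the analogous concentration results for those collections.
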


On the one hand, Theorem \ref{distinct-phantoms-thm} provides a negative answer to the final part of Question {\bf B}.
Combining with the results in \cite{Kra24} and \cite{KKL+26}, Theorem \ref{distinct-phantoms-thm} also implies that \cite[Conjecture 4.11]{KKL+26} holds for $n\leq 2$. 
On the other hand, a natural question arises as follows, which we hope to address in \cite{MXY26}.

\begin{mainquest2}
Is there a blow-up of $\PB^{2}$ at $10$ points in a special configuration that admits countably infinitely many pairwise non-equivalent phantom categories?  
\end{mainquest2}

Moreover, since the discovery of the first examples of phantom categories, it has remained a well-known open problem whether a phantom category can admit a bounded $t$-structure (\cite[\S 5, Question 2]{Sos20}). More generally, given a smooth projective variety $X$ and an admissible subcategory $\mathscr{C}\subset \DC(X)$, one may ask whether $\mathscr{C}$ admits a bounded $t$-structure (\cite[Question 1.1]{KLP26}).
Recently, Kuznetsov--Liu--Perry \cite{KLP26} introduced a method for inducing $t$-structures of a triangulated category with a $t$-structure on its semi-orthogonal components.
Using this method, they showed in \cite[Theorem 6.12]{KLP26} that almost all previously known examples of phantoms and quasi-phantoms admit bounded $t$-structures. 
They also mentioned that perhaps the most interesting application of their results is to phantom categories.
As an application, we establish the following.

\begin{thm}\label{phantom-bounded-t-structure}
Let $\mathscr{A}_{X}^{(a)}$ be a phantom category in Theorem \ref{mainthm}, where $a\geq 0$.
Then $\mathscr{A}_{X}^{(a)}$ has a bounded $t$-structure. 
\end{thm}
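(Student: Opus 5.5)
The plan is to use the Kuznetsov--Liu--Perry criterion for inducing bounded $t$-structures on semi-orthogonal components \cite{KLP26}, in the form they apply to phantoms in \cite[Theorem 6.12]{KLP26}. The rough mechanism is as follows. Suppose $\DC(X)=\langle \RA, E_1,\dots,E_n\rangle$ with the $E_i$ exceptional. Suppose further that there is a bounded $t$-structure on $\DC(X)$, for instance the standard one or a tilt/perverse version of it, whose heart contains the objects $E_i$. Suppose finally that the projection functor to $\RA$ (or to ${}^\perp\langle E_i\rangle$) is suitably right $t$-exact, meaning the relevant $\Ext$-vanishing in negative degrees between the exceptional objects and the heart holds. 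Then the $t$-structure restricts, or is induced, on $\RA$. For phantoms on surfaces, \cite{KLP26} reduces this to checking that the exceptional line bundles in the collection can be placed in a single heart, together with vanishing conditions of the form $\Ext^{<0}$ and $\Ext^{2}$ (or $\Hom$) vanishing in specific directions.

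Concretely, I will first reorganize the collection of Theorem \ref{mainthm} so that it sits in a suitable position. Using the Serre functor $-\otimes\CO_X(K_X)[2]$ and mutations of the exceptional collection that preserve its orthogonal (up to equivalence), I will move everything to the left or to the right of $\RA_X^{(a)}$, whichever is convenient. Since all members are line bundles, they lie in the standard heart $\mathrm{Coh}(X)$. The criterion then asks, as I recall, that for the objects $E_i$ and all sheaves $F$ one has $\Ext^{k}(E_i,F)=0$ for $k$ outside a range of length at most $1$. Equivalently, the heart should be glued compatibly, and the condition reduces to the $t$-amplitude of the projection functor being bounded. A clean sufficient condition from \cite{KLP26} uses a generating object: if $\langle E_1,\dots,E_n\rangle$ is generated by an object $T$ with $\RHom(T,-)$ of bounded amplitude, which is automatic here, then the induced $t$-structure exists whenever $\RA$ is admissible in a category of finite homological dimension with a bounded $t$-structure on the complement satisfying a "tilting" compatibility.

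Key steps in order. (1) Recall the precise statement from \cite{KLP26} giving bounded $t$-structures on $\RA$ when $\DC(X)=\langle \RA,\RB\rangle$ and $\RB$ is generated by an exceptional collection whose objects have Ext-groups concentrated in degrees $0$ and $1$ relative to a chosen heart. (2) Verify this Ext-concentration for the collection in Theorem \ref{mainthm}. The pairwise groups $\Ext^{\bullet}(\CO_X(D),\CO_X(D'))=H^{\bullet}(X,\CO_X(D'-D))$ must be checked. The exceptional-collection computations already performed in the proof of Theorem \ref{mainthm} give the vanishing of the wrong-direction groups. For the forward direction, I need $H^2(X,\CO_X(D'-D))=0$, which by Serre duality means $h^0(K_X-D'+D)=0$. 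Since the differences are multiples of $K_X$ plus small corrections, and $X$ is the blow-up at $10$ general points (so $-K_X$ is not effective and $|mK_X+\dots|$ is empty for the relevant classes), this should follow from the same nefness/effectivity arguments used earlier. (3) Conclude via \cite[Theorem 6.12]{KLP26}-type reasoning that the induced heart on $\RA_X^{(a)}$ is bounded. Boundedness holds because $X$ is smooth, so $\DC(X)$ has finite homological dimension and the projection functors have finite cohomological amplitude.

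The main obstacle is step (2): the $H^2$-vanishing in the forward direction for the pairs involving $F_a$ and $F_{a+1}$. There the differences, such as $F_{a+1}-G$, grow with $a$, and effectivity of $K_X-(F_{a+1}-G)$ must be excluded uniformly in $a$. My first attempt is to intersect with a nef class, such as $-K_X+\epsilon H$ or the class of the unique-type curves used earlier in the paper, to show negative degree. If that fails, I will instead reorder the collection by mutation so that only pairs with manifestly non-effective differences appear.
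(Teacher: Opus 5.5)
\textbf{There is a genuine gap.} You never pin down the actual hypothesis of the Kuznetsov--Liu--Perry criterion, and the condition you set out to verify in step (2) is false for this collection.

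The criterion the paper uses is Theorem \ref{KLP-thm} (\cite[Theorem 1.7]{KLP26}). It requires only three things:
\begin{enumerate}
\item a decomposition $\DC(X)=\langle L_1,\dots,L_{13},\mathscr{C}\rangle$;
\item the $L_i$ lying in the heart of a noetherian bounded $t$-structure;
\item $\Hom(L_i,L_j)=0$ for $i<j$.
\end{enumerate}
So the constraint is on forward $\Ext^{0}$; forward $\Ext^{1}$ and $\Ext^{2}$ are unrestricted.

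You instead propose to prove $H^{2}(X,\CO_X(D'-D))=0$, i.e.\ $h^{0}(K_X-D'+D)=0$, for forward pairs. That cannot hold. Already $\chi(A)=1$ while $h^{0}(A)=0$ (since $A\cdot H=-7$), so $h^{2}(A)\geq 1$. Indeed Proposition \ref{10pts-ext2-main} gives
\[
\ext^{2}(\CO_X,\CO_X(A))=1,\quad \ext^{2}(\CO_X,\CO_X(G))=2,\quad \ext^{2}(\CO_X,\CO_X(F_a))=2a+2.
\]
More conceptually, the involution is linear and preserves $\chi$, so the forward Euler characteristics equal those of the original full collection \eqref{Original-FEC}, many of which are positive. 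Once forward $\Hom$ vanishes, as it must for the collection to behave like a phantom-producing one, the forward $\Ext^{2}$ is forced to be nonzero.

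So the ``main obstacle'' you flag is not something a cleverer nef class could overcome; it is an impossibility. It also fails already for the pair $(\CO_X,\CO_X(A))$, not only for pairs involving $F_a$. Your other recollections are likewise not the statement: $\Ext$ from the $E_i$ to all sheaves concentrated in a window of length one, or a ``tilting compatibility'' with a generator. A line bundle on a surface would not satisfy the first of these anyway.

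\textbf{The missing step.} Set $\mathscr{C}_X:={}^{\perp}\langle L_1,\dots,L_{13}\rangle$, so that $\DC(X)=\langle L_1,\dots,L_{13},\mathscr{C}_X\rangle$. The Serre functor gives an equivalence $-\otimes\omega_X\colon\mathscr{C}_X\to\RA_X^{(a)}$. Now apply Theorem \ref{KLP-thm} to the standard $t$-structure on $\DC(X)$, which is bounded and noetherian with heart $\mathrm{Coh}(X)$ containing every $L_i$.

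The only input needed is the forward $\Hom$-vanishing of Lemma \ref{10pts-forward-hom-zero-main}. That follows from $(L_j\otimes L_i^{-1})\cdot H<0$, apart from three elementary cases. Boundedness of the induced $t$-structure is then part of the conclusion of the theorem, and the nonzero forward $\Ext^{2}$ are harmless.
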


Finally, in Example \ref{10pts-object-in-heart}, we also construct some explicit objects in the heart of bounded 
$t$-structures on each phantom category $\mathscr{A}_{X}^{(a)}$, as in \cite{KLP26}.

\subsection*{Strategy of the proof}
The basic idea of the proof is elegantly simple, and it has been adopted in \cite{Kra24,KKL+26,MXY25}.
Let $X$ be the blow-up of $\PB^{2}$ at $10$ points in general position. 
The first step is to construct a full exceptional collection of line bundles
\begin{equation}\label{ori-FEC-line}
\DC(X)= \langle \CO_{X}(D_{1}), \cdots, \CO_{X}(D_{13}) \rangle. 
\end{equation}
The {\it central challenge}, then, is to identify a suitable such full exceptional collection.
For instance, we note that $X$ can be viewed as the blow-up of $\PB^{1}\times \PB^{1}$ at $9$ points. 
Based on the standard full exceptional collections of line bundles on $\PB^{1}\times \PB^{1}$, by Orlov's blow-up formula \cite{Orl93}, we obtain the desired full exceptional collection of line bundles on $X$.

Next, the {\it key technique} of Krah \cite{Kra24} is to apply an involution $\iota: \Pic(X)\to \Pic(X)$ to a promising \eqref{ori-FEC-line} that preserves the intersection product and the canonical divisor $K_{X}$.
The resulting sequence of line bundles 
\begin{equation}\label{isometry-ori-FEC-line}
\{\CO_{X}(\iota(D_{1})), \cdots, \CO_{X}(\iota(D_{13}))\}  
\end{equation}
is a numerically exceptional collection, i.e. $\chi(\CO_{X}(D_{i}-D_{j}))=0$ for any $1\leq i<j\leq 13$. 
One of the {\it main difficulties} is to establish the non-effectivity of certain divisors.
Previous works \cite{Kra24,KKL+26,MXY25} rely heavily on the Segre--Harbourne--Gimigliano--Hirschowitz (SHGH) conjecture for divisors with small multiplicities, as established in \cite{CM11,DJ07}. In contrast, the multiplicities arising in our setting may be arbitrarily large, a regime where the SHGH conjecture remains widely open. We therefore adopt Dumnicki's diagram-cutting method \cite{Dum07}  to identify divisors with no global sections (see Proposition \ref{10pts-NoSect-case1} and Proposition \ref{10pts-NoSect-case2}).

Finally, to study the orthogonal complement of the constructed exceptional collection, 
we mainly use Kuznetsov's notions of height and normal Hochschild cohomology \cite{Kuz15}.  

\subsection*{Organization}
In Section \ref{Prelim-sect}, we briefly review  semi-orthogonal decompositions, phantom categories, Hochschild cohomology and normal Hochschild cohomology of admissible subcategories, height and pseudoheight of exceptional collections, and Dumnicki's diagram-cutting method.
In Section \ref{desired-FEC}, we recall the construction of the full exceptional collections of line bundles on the blow-up of the complex projective plane $\PB^{2}$ at points in general position that will be used throughout the paper.
Section \ref{10pts-pf-mianthm1} is devoted to the proof of Theorem \ref{mainthm}. 
In Section \ref{Compar-phantoms}, we prove Theorem \ref{distinct-phantoms-thm} by distinguishing the various phantom categories arising in Theorem \ref{mainthm} via their Hochschild cohomology groups.
In Section \ref{appl-t-struture}, we prove Theorem \ref{phantom-bounded-t-structure}, namely, all phantom categories constructed in Theorem \ref{mainthm} admit bounded $t$-structures (Theorem \ref{bd-t-str-ourphant}), and we construct some explicit objects lying in the corresponding hearts.
In Section \ref{Final-remarks}, we propose possible ways to construct further phantom categories via mutations and Krah's approach.
Finally, Appendix \ref{technique-prop2} contains the proof of a technical result. 

\subsection*{Notation and conventions}
We work over the complex number field $\CN$.
A variety is an integral separated scheme of finite type over $\CN$.
For a variety $X$, we use $\DC(X)$ to denote the bounded derived category of coherent sheaves on $X$.
For $E,F\in \DC(X)$, we denote $\ext^{k}(E,F):=\dim \Ext^{k}(E,F)$ and $\hom(E,F):=\dim \Hom(E,F)$, and set 
$$
\RHom(E,F):=\bigoplus_{k\in \ZN} \Hom(E,F[k])[-k].
$$
For a divisor $D$ on $X$, we set $h^{k}(D):=\dim H^{k}(X,\CO_{X}(D))$, $k\in\ZN$, and $\chi(D):=\chi(\CO_{X}(D))$ the Euler characteristic of $\CO_{X}(D)$.

\subsection*{Acknowledgments}
We would like to express our deep gratitude to Professor Xiaojun Chen for his constant encouragement and support.
We thank Professors Tom Bridgeland, Alexander Perry and Evgeny Shinder for their valuable comments on the initial version of this paper.
Finally, we thank the School of Mathematics at Sichuan University and the Tianyuan Mathematical Center in Southwest China for hosting our research visit in the summer of 2026.
This work is partially supported by the National Natural Science Foundation of China (No. 12171351 and No. 12501051), and by Sichuan Science and Technology Program (No. 2025ZNSFSC0800).

%===================================================================

\section{Preliminaries}\label{Prelim-sect}

In this section, we briefly review semi-orthogonal decompositions, phantom categories, Hochschild cohomology and normal Hochschild cohomology of admissible subcategories, height and pseudoheight of exceptional collections, and Dumnicki's diagram-cutting method.

\subsection{Semi-orthogonal decompositions and phantom categories}

Let $X$ be a smooth complex projective variety and $\CA\subset \DC(X)$ be a full triangulated subcategory. 
The {\it left} and {\it right orthogonal complements} of $\CA$
are respectively defined by
$$
{}^{\perp}\CA:=\{E \in \DC(X) \mid \Hom(E,A) = 0 \text{ for all } A \in \CA\}
$$
and
$$
\CA^{\perp}:=\{E \in \DC(X) \mid \Hom(A,E)=0 \text{ for all } A \in \CA\}.
$$

\begin{defn}
We say that a full triangulated subcategory $\mathcal{A} \subset \DC(X)$ is {\it admissible} if the inclusion functor $\imath:\mathcal{A} \hookrightarrow \DC(X)$ has both right adjoint $\imath^{!}$ and left adjoint $\imath^{\ast}$. 
\end{defn}

\begin{rem}
If $\CA\subset \DC(X)$ is an admissible subcategory, then both ${}^{\perp}\CA$ and $\CA^{\perp}$ are admissible subcategories.    
\end{rem}
 
One of the most important examples of admissible subcategories is generated by an exceptional collection.

\begin{defn}
An object $ A\in \DC(X)$ is called {\it exceptional} if 
$$
\RHom(A,A)=\CN[0].
$$
An ordered sequence of exceptional objects $\{A_{1}, \cdots, A_{l}\}$ is called an {\it exceptional collection of length $l$} if 
$$
\RHom(A_{j},A_{i})=0
$$ 
for any $1\leq i<j\leq l$.
\end{defn}

Let $\{A_{1}, \cdots, A_{l}\}$ be an exceptional collection.
We use $\langle A_{1}, \cdots, A_{l}\rangle$ to denote the minimal full triangulated subcategory of $\DC(X)$ containing all objects $A_{i}$, $1\leq i\leq l$. Then, each triangulated subcategory $\langle A_{i} \rangle$ is equivalent to $\DC({\rm Spec} \,\CN)$.

\begin{defn}
We say that an exceptional collection $\{A_{1}, \cdots, A_{l}\}$ is {\it full} if $\DC(X)=\langle A_{1}, \cdots, A_{l}\rangle$.    
\end{defn}

\begin{ex}
(1) On $\PB^{2}$, there is a standard full exceptional collection of line bundles
$$
\DC(\PB^{2})=\langle\CO,\CO(H),\CO(2H)\rangle,
$$
where $H$ is the hyperplane class on $\PB^{2}$.

(2) On $\PB^{1}\times \PB^{1}$, for every integer $a\geq 0$, 
there is a standard full exceptional collection of line bundles
$$
\DC(\PB^{1}\times \PB^{1})=\langle \CO,\CO(P),\CO(Q+aP),\CO(Q+(a+1)P) \rangle,
$$
where $P$ and $Q$ are the two ruling classes on $\PB^{1}\times \PB^{1}$. 
\end{ex}

It is of importance to notice that every full exceptional collection is a special semi-orthogonal decomposition. 

\begin{defn}
An ordered sequence of full triangulated subcategories $\{\CA_{1},\cdots,\CA_{l}\}$ of $\DC(X)$ is called a {\it semi-orthogonal decomposition} of $\DC(X)$ if the following conditions hold:
\begin{enumerate}
\item[(1)] for all $A_{i}\in\CA_{i}$, $A_{j} \in \CA_{j}$, one has $\Hom(A_{i},A_{j})=0$ if $j<i$;
\item[(2)] for any object $T\in \DC(X)$, there exists a chain of morphisms 
$$
\xymatrix@C=0.5cm{
0=T_{l} \ar[r] & T_{l-1} \ar[r] & \cdots \ar[r] & T_{1} \ar[r] & T_{0}=T
}
$$
such that the cone $\mathrm{Cone}(T_{i}\rightarrow T_{i-1})\in \CA_{i}$ for all $1\leq i\leq l$.
\end{enumerate}
Such a semi-orthogonal decomposition is denoted by 
$$
\DC(X)=\langle \CA_{1},\CA_{2},\cdots,\CA_{n} \rangle.
$$
\end{defn}

\begin{ex}
If $\CA\subset \DC(X)$ is an admissible subcategory, then there are two semi-orthogonal decompositions 
$$
\DC(X)=\langle \CA^{\perp},\CA\rangle=\langle \CA,{}^{\perp}\CA \rangle.
$$   
\end{ex}

Mutations are crucial operations to construct new full exceptional collections and semi-orthogonal decompositions.
Recall that the {\it left} and {\it right} mutation functors are defined by
$$
\begin{array}{cccl}
{\rm L}_{\CA}: & \DC(X) & \longrightarrow  & \DC(X)  \\
&F&\longmapsto &
{\rm Cone}(\imath\imath^{!}F\rightarrow F),
\end{array}
$$
and
$$
\begin{array}{cccl}
{\rm R}_{\CA}: & \DC(X) & \longrightarrow  & \DC(X)  \\
&F&\longmapsto &
{\rm Cone}(F\rightarrow \imath\imath^{\ast}F)[-1].
\end{array}
$$
In particular, if $\CA$ is generated by an exceptional object $E\in \DC(X)$, then   
$$
\xymatrix@C=0.5cm{
{\rm L}_{E}(F)={\rm Cone}(\RHom(E,F)\otimes E \rightarrow F)
}
$$
and
$$
\xymatrix@C=0.5cm{
{\rm R}_{E}(F)={\rm Cone}(F\rightarrow \RHom(F,E)^{\ast}\otimes E)[-1].
}
$$

\begin{defn}
We say that two full exceptional collections in $\DC(X)$
$$
\mathbb{E}=\{E_{1}, \cdots, E_{n}\}
\textrm{ and }\, 
\mathbb{F}=\{F_{1}, \cdots, F_{n}\}
$$ 
are {\it mutation-equivalent} if $\mathbb{F}$ can be obtained from $\mathbb{E}$ by a finite sequence of left or right mutations.    
\end{defn}

By a theorem of Bondal, every exceptional collection gives a semi-orthogonal decomposition:

\begin{thm}[{\cite[Theorem 3.2]{Bon90}}]
If $\{A_{1}, \cdots, A_{l}\}$ is an exceptional collection on $\DC(X)$,
then there exists a semi-orthogonal decomposition
$$
\DC(X)=\langle \CA_{X}, A_{1}, \cdots,A_{l} \rangle,
$$
where $\CA_{X}$ is the right orthogonal decomposition of $\langle A_{1}, \cdots, A_{l} \rangle $.    
\end{thm}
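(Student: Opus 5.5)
The plan is to reduce the statement to two facts: the subcategory $\CB:=\langle A_{1},\cdots,A_{l}\rangle$ is admissible, and any admissible $\CB$ gives $\DC(X)=\langle \CB^{\perp},\CB\rangle$. Refining the second piece by the $A_{i}$ then yields $\DC(X)=\langle \CA_{X},A_{1},\cdots,A_{l}\rangle$ with $\CA_{X}=\CB^{\perp}$.

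\emph{Step 1: a single exceptional object.} Let $E$ be exceptional. Since $X$ is smooth projective, $\RHom(E,F)$ is a finite-dimensional graded vector space for every $F\in\DC(X)$. Because $\RHom(E,E)=\CN[0]$, the functor $V\mapsto V\otimes E$ identifies $\DC({\rm Spec}\,\CN)$ with $\langle E\rangle$. The right adjoint of the inclusion is $\imath^{!}F=\RHom(E,F)\otimes E$, and the left adjoint is $\imath^{\ast}F=\RHom(F,E)^{\ast}\otimes E$. The second description uses finite-dimensionality (equivalently Serre duality). Thus $\langle E\rangle$ is admissible, and the cones ${\rm L}_{E}$ and ${\rm R}_{E}$ introduced above are exactly the complementary projections.

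\emph{Step 2: induction on $l$.} Set $\CB'=\langle A_{2},\cdots,A_{l}\rangle$, which is admissible by induction. For $T\in\DC(X)$, first take the triangle $\imath'\imath'^{!}T\to T\to T'$ with $T'\in\CB'^{\perp}$. Then apply the projection of Step 1 for $A_{1}$ to $T'$, using $\RHom(A_{1},T')\otimes A_{1}\to T'$. The resulting cone $T''$ lies in $A_{1}^{\perp}$. It also lies in $\CB'^{\perp}$: both $T'$ and $A_{1}$ are in $\CB'^{\perp}$, since $\RHom(A_{j},A_{1})=0$ for $j>1$.

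Hence $T''\in\CB^{\perp}$. The octahedral axiom then gives a triangle $B\to T\to T''$ with $B$ an extension of objects of $\langle A_{1}\rangle$ and $\CB'$, so $B\in\CB$. Functoriality of $B$ in $T$ follows from the vanishing $\Hom(\CB,\CB^{\perp})=0$, so this defines a right adjoint to $\CB\hookrightarrow\DC(X)$. The left adjoint is obtained dually, by working in the opposite order with the left projections. This proves $\CB$ admissible.

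\emph{Step 3: assembling the decomposition.} The triangle $B\to T\to T''$ is already the decomposition $\DC(X)=\langle\CB^{\perp},\CB\rangle$. Unwinding the induction filters $B$ by objects of $\langle A_{l}\rangle,\dots,\langle A_{1}\rangle$, which produces the required chain $0=T_{l+1}\to\cdots\to T_{0}=T$ with cones in the successive components.

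Semi-orthogonality has two parts. Among the $A_{i}$ it is the definition of an exceptional collection. For $\CA_{X}$ it is the defining property $\Hom(A_{i},\CA_{X})=0$, i.e. $\CA_{X}=\CB^{\perp}$. The main point requiring care is the octahedral step in Step 2, namely checking that the new cone stays in $\CB'^{\perp}$. Everything else is formal once finite-dimensionality of $\RHom$ on a smooth projective variety is granted.
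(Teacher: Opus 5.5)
Your proof is correct. The paper gives no argument of its own for this statement; it simply quotes \cite[Theorem 3.2]{Bon90}. What you have written is the standard proof of that result.

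For a single exceptional object you write down the adjoints $\RHom(E,-)\otimes E$ and $\RHom(-,E)^{\ast}\otimes E$, which uses Ext-finiteness of $\DC(X)$. You then induct on $l$. The essential point is the one you flag: projecting $T'\in\CB'^{\perp}$ away from $A_{1}$ keeps it in $\CB'^{\perp}$, because $\RHom(A_{j},A_{1})=0$ for $j>1$. The octahedral axiom then puts the fibre of $T\to T''$ in $\CB$.

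One minor remark: the decomposition $\DC(X)=\langle \CB^{\perp},\CB\rangle$ only needs the right adjoint, that is, the triangle $B\to T\to T''$. Your induction in fact only uses the right adjoint of $\CB'$. So the dual construction of the left adjoint in Step~2 is harmless but not needed for the statement as given.
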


Given a semi-orthogonal decomposition $\DC(X)= \langle \CA_{1},\cdots, \CA_{l}\rangle$, 
there is a decomposition of the Grothendieck groups   
$$
K_{0}(\DC(X)) \cong K_{0}(\CA_{1}) \oplus \cdots \oplus K_{0}(\CA_{l}).
$$
In particular, if $\mathbb{A}:=\{A_{1},A_{2},\cdots,A_{l}\}$ is an exceptional collection on $\DC(Y)$, then 
$$
K_{0}(\DC(X)) \cong K_{0}(\CA)\oplus \ZN^{l},
$$
where $\CA$ is the right or left orthogonal complement of the sequence $\mathbb{A}$.
In particular, if $\DC(X)$ has a full exceptional collection of length $l$, then $K_{0}(\CA)=0$.

\begin{defn}
Let $\CA\subset \DC(X)$ be a non-trivial admissible subcategory.
We say that $\CA$ is called a \textit{phantom} if $K_{0}(\CA)=0$.
\end{defn}

Let $ \CA \subset \DC(X)$ and $ \CB \subset \DC(Y)$ be full triangulated subcategories. 
Then, the box tensor $\CA \boxtimes \CB \subseteq \DC(X\times Y) $ is defined to be the smallest full triangulated subcategory of $\DC(X\times Y)$ closed under direct summands and containing all objects of the form
$p_{X}^{\ast}A \otimes^{L} p_{Y}^{\ast}B$ for $ A \in \CA $ and $B \in \CB$. 
Here, $p_{X}:X\times Y\rightarrow X$ and $p_{Y}:X\times Y \rightarrow Y$ are natural projections.  

\begin{defn}
A phantom subcategory $\CA \subset  \DC(X) $ is called a {\it universal phantom} if for any smooth projective variety $Y$, the category $\CA \boxtimes \DC(Y) \subset \DC(X\times Y)$ is a phantom subcategory.
\end{defn}

We use $M(X)$ to denote the Chow motive of $X$ with integral coefficients, and 
$\mathbb{L}$ to denote the Lefschetz motive; for further details, we refer to \cite{Ma68} and \cite{GO13}. We say that a Chow motive $M(X)$ is of \textit{Lefschetz type} if it is isomorphic to a finite direct sum of motives of the form 
$\mathbb{L}^{\otimes r}$. 
The following result is then used to demonstrate that our phantom is universal.

\begin{prop}\label{prop:univ_phantom}
Let $\CA\subseteq \DC(X)$ be a phantom subcategory. 
If the Chow motive $M(X)$ has Lefschetz type, then $\CA$ is a universal phantom category.
In particular, if $X$ has a full exceptional collection of length $l$ and $\CA$ is orthogonal to a non-full exceptional collection of  length $l$, then $\CA$ is a universal phantom category.
\end{prop}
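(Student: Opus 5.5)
The plan is to follow Gorchinskiy--Orlov \cite{GO13}. The key fact is that $K_0$ of a box product of admissible subcategories is controlled by the motive. Fix a smooth projective variety $Y$. Since $\CA$ is admissible in $\DC(X)$, we have $\DC(X)=\langle \CA^{\perp},\CA\rangle$. Taking box products with $\DC(Y)$ (Kuznetsov's base change for semi-orthogonal decompositions) gives
$$
\DC(X\times Y)=\langle \CA^{\perp}\boxtimes \DC(Y),\ \CA\boxtimes\DC(Y)\rangle .
$$
Hence $K_0(X\times Y)\cong K_0(\CA^{\perp}\boxtimes\DC(Y))\oplus K_0(\CA\boxtimes\DC(Y))$, and it suffices to show that the first summand already exhausts $K_0(X\times Y)$. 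Non-triviality of $\CA\boxtimes\DC(Y)$ is automatic: for $A\neq 0$ in $\CA$ and a point sheaf $\CO_y$, the object $A\boxtimes\CO_y$ is nonzero.

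First I use the Lefschetz-type hypothesis. If $M(X)\cong\bigoplus_i\mathbb{L}^{\otimes r_i}$, then $M(X\times Y)\cong\bigoplus_i M(Y)\otimes\mathbb{L}^{\otimes r_i}$. Passing to Chow groups and using the Grothendieck--Riemann--Roch comparison between $K_0$ and $\mathrm{CH}^*$, the natural map $K_0(X)\otimes K_0(Y)\to K_0(X\times Y)$ is an isomorphism; this is precisely the argument of \cite[Prop.~1.9]{GO13}. I would expect to quote this directly, since it is the step with genuine content: one needs integral, not just rational, control, and this comes from the Chow-motive decomposition with integral coefficients.

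Next I conclude. Because $K_0(\CA)=0$, every class in $K_0(X)$ comes from $K_0(\CA^{\perp})$. Therefore $K_0(X\times Y)$ is generated by classes $[B\boxtimes F]$ with $B\in\CA^{\perp}$ and $F\in \DC(Y)$, and these lie in $K_0(\CA^{\perp}\boxtimes\DC(Y))$. The projection of $K_0(X\times Y)$ onto the summand $K_0(\CA\boxtimes\DC(Y))$ is surjective and kills these generators, so the summand is zero. This shows that $\CA\boxtimes\DC(Y)$ is a phantom.

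For the ``in particular'' statement, suppose $X$ has a full exceptional collection of length $l$. Then $K_0(X)\cong\ZN^l$, and $M(X)$ is of Lefschetz type: by Marcolli--Tabuada or the argument in \cite{GO13}, a full exceptional collection on a variety forces its integral Chow motive to be a sum of Tate motives, at least for surfaces. In our surface case this can be seen directly, since rational surfaces have $M(X)\cong\mathbf{1}\oplus\mathbb{L}^{\oplus\rho}\oplus\mathbb{L}^{2}$. If $\CA$ is orthogonal to an exceptional collection of length $l$, then $K_0(\CA)\oplus\ZN^l\cong\ZN^l$. A surjection $\ZN^l\to\ZN^l$ is an isomorphism, so $K_0(\CA)=0$, and the first part applies. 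The main obstacle is the Lefschetz-type claim for a general $X$; I would restrict to, or cite, the known case needed, namely rational surfaces, where the motive is explicit.
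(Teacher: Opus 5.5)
Your proof of the first assertion is the same as the paper's, which simply invokes \cite[Corollary 4.3, Proposition 4.4]{GO13}. The Lefschetz-type motive gives integral K\"unneth surjectivity $K_0(X)\otimes K_0(Y)\to K_0(X\times Y)$, and the base-changed decomposition $\DC(X\times Y)=\langle \CA^{\perp}\boxtimes\DC(Y),\CA\boxtimes\DC(Y)\rangle$ then forces $K_0(\CA\boxtimes\DC(Y))=0$.

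One correction to your sketch of the cited step. Grothendieck--Riemann--Roch compares $K_0$ and $\mathrm{CH}^{*}$ only after tensoring with $\mathbb{Q}$, so it would only show that $K_0(\CA\boxtimes\DC(Y))$ is torsion. The integral argument runs through the topological filtration instead. A Lefschetz decomposition writes the diagonal as $\sum_i\alpha_i\times\beta_i$ in $\mathrm{CH}^{\dim X}(X\times X)$, so $\mathrm{CH}^{*}(X\times Y)$ is spanned by exterior products. The map $\mathrm{CH}^{p}(X\times Y)\to\mathrm{gr}^{p}K_0(X\times Y)$, $[Z]\mapsto[\CO_Z]$, is surjective, and $[\CO_{V\times W}]=[\CO_V]\boxtimes[\CO_W]$. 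Descending induction on $p$ then gives the surjectivity, which is all your final step uses.

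The genuine gap is in the ``in particular'' clause for an arbitrary $X$. You reduce it to the claim that a full exceptional collection makes the \emph{integral} Chow motive of Lefschetz type. However, the Marcolli--Tabuada theorem you invoke concerns $M(X)_{\mathbb{Q}}$, and as you acknowledge, your argument really only covers rational surfaces via Manin's blow-up formula. That suffices for the paper's $X$ but not for the statement as written.

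The fix bypasses motives entirely. If $\{E_1,\dots,E_l\}$ is full, base change gives $\DC(X\times Y)=\langle \langle E_1\rangle\boxtimes\DC(Y),\dots,\langle E_l\rangle\boxtimes\DC(Y)\rangle$. Each component is equivalent to $\DC(Y)$ via $F\mapsto E_i\boxtimes F$, so $K_0(X\times Y)\cong K_0(Y)^{\oplus l}$ is spanned by the classes $[E_i\boxtimes F]$. This is exactly the surjectivity used in your concluding paragraph. Combined with your rank count $K_0(\CA)\oplus\ZN^{l}\cong\ZN^{l}\Rightarrow K_0(\CA)=0$, and with $\CA\neq 0$ by non-fullness, the clause follows for every $X$.
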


\begin{proof}
This is a combination of Corollary 4.3 and Proposition 4.4 in \cite{GO13}; see also \cite[Lemma 2.5]{KKL+26}.
\end{proof}

%-----------------------------------------------------------
\subsection{Height and pseudoheight of exceptional collections}

We will use the notion of height to detect the non-fullness of exceptional collections and normal Hochschild cohomology to distinguish between various phantom categories, both of which were introduced by Kuznetsov \cite{Kuz15}.

Let $X$ be a smooth complex projective variety and let $\CA\subset \DC(X)$ be an admissible subcategory.
Then, there is a semi-orthogonal decomposition
$\DC(X)=\langle \CA, \CB \rangle$. 
Recall that there is an induced semi-orthogonal decomposition 
$$
\DC(X\times X)=\Big\langle \CA\boxtimes \DC(X), \CB\boxtimes \DC(X)\Big\rangle.
$$ 
Thus, we have a distinguished triangle 
$$
\xymatrix@=0.5cm{
P_{\CB} \ar[r]& \Delta_{\ast}\CO_{X} \ar[r]& P_{\CA},}
$$
where $\Delta: X\hookrightarrow X\times X$ is the diagonal, $P_{\CA}\in \CA\boxtimes \DC(X)$ and $P_{\CB}\in \CB\boxtimes \DC(X)$.
The {\it Hochschild cohomology} of $X$ and $\CA$ are defined respectively by
\begin{align*}
\HH^{\bullet}(X)&:=\Ext^{\bullet}(\Delta_{\ast}\CO_{X},\Delta_{\ast}\CO_{X}) \\
\HH^{\bullet}(\CA)&:=\Ext^{\bullet}(P_{\CA},P_{\CA}).
\end{align*}
In particular, if $\CA=\DC(X)$, then $\HH^{\bullet}(X)=\HH^{\bullet}(\DC(X))$.

For one thing, the computation of Hochschild cohomology of an admissible subcategory is generally very difficult, so we need normal Hochschild cohomology. 
For another, Kuznetsov's notion of height relies on the normal Hochschild cohomology. 
Let us recall the definition.
Suppose that $\mathfrak{D}$ is the \u{C}ech enhancement of $\DC(X)$ and $\mathfrak{B} \subset \mathfrak{D}$ is a DG subcategory.

\begin{defn}[{\cite[Definition 3.2]{Kuz15}}]
The {\it normal Hochschild cohomology of $\mathfrak{B}$} in $\mathfrak{D}$ is defined as the derived tensor product of the diagonal and the normal bimodule of $\mathfrak{B}$:
$$
\NHH^{\bullet} (\mathfrak{B},\mathfrak{D}):=\mathfrak{B}\otimes^{\mathbb{L}}_{\mathfrak{B}^{\mathrm{op}}\otimes \mathfrak{B}} \mathfrak{D}_{\mathfrak{B}}^{-1}.
$$
Moreover, if $\mathfrak{B}$ is the induced enhancement of a triangulated subcategory $\mathcal{B} \subset \DC(X)$, we denote $\NHH^{\bullet}(\mathcal{B},X):=\NHH^{\bullet}(\mathfrak{B}, \mathfrak{D})$.
\end{defn}

The following theorem is useful for computing the Hochschild cohomology.

\begin{thm}[{\cite[Theorem 3.3]{Kuz15}}] \label{thm:nhh_triang}
Let $\DC(X)=\langle \CA, \CB \rangle$ be a semi-orthogonal decomposition.
Then there is a distinguished triangle
$$
\xymatrix@=0.5cm{
\NHH^{\bullet}(\CB,X) \ar[r]& \HH^{\bullet}(X) \ar[r]& \HH^{\bullet}(\CA).
}
$$
\end{thm}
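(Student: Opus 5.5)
The plan is to derive the triangle directly from the kernel triangle
$$
\xymatrix@=0.5cm{
P_{\CB} \ar[r]& \Delta_{\ast}\CO_{X} \ar[r]& P_{\CA}
}
$$
recalled above, and then to identify the first term with the normal Hochschild cohomology. This needs two steps.

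\textbf{Step 1: rewrite $\HH^{\bullet}(\CA)$.} Recall that $P_{\CB}\in \CB\boxtimes\DC(X)$ and $P_{\CA}\in \CA\boxtimes\DC(X)$, and that these are the two components of the induced semi-orthogonal decomposition $\DC(X\times X)=\langle \CA\boxtimes\DC(X),\CB\boxtimes\DC(X)\rangle$. Semi-orthogonality gives $\RHom(P_{\CB},P_{\CA})=0$. Applying $\RHom(-,P_{\CA})$ to the kernel triangle therefore yields
$$
\HH^{\bullet}(\CA)=\Ext^{\bullet}(P_{\CA},P_{\CA})\cong \Ext^{\bullet}(\Delta_{\ast}\CO_X,P_{\CA}).
$$
Next apply $\RHom(\Delta_{\ast}\CO_X,-)$ to the same triangle. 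This gives a distinguished triangle
$$
\xymatrix@=0.5cm{
\RHom(\Delta_{\ast}\CO_X,P_{\CB}) \ar[r]& \HH^{\bullet}(X) \ar[r]& \HH^{\bullet}(\CA),
}
$$
whose second arrow is induced by $\Delta_{\ast}\CO_X\to P_{\CA}$. It remains to show $\RHom(\Delta_{\ast}\CO_X,P_{\CB})\cong \NHH^{\bullet}(\CB,X)$.

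\textbf{Step 2: identify the first term.} I would pass to the \u{C}ech DG enhancement $\mathfrak{D}$ and its subcategory $\mathfrak{B}$. Under the standard equivalence between $\DC(X\times X)$ and the derived category of $\mathfrak{D}$-bimodules:
\begin{itemize}
\item $\Delta_{\ast}\CO_X$ corresponds to the diagonal bimodule $\mathfrak{D}$;
\item $P_{\CB}$, the kernel of the projection functor $\imath\imath^{!}$ onto $\CB$, corresponds to $\mathfrak{D}\otimes^{\mathbb{L}}_{\mathfrak{B}}\mathfrak{D}$.
\end{itemize}
Since $X$ is smooth and projective, $\mathfrak{D}$ is smooth and proper. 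Hence for any bimodule $M$,
$$
\RHom_{\mathfrak{D}^{\mathrm{op}}\otimes\mathfrak{D}}(\mathfrak{D},M)\cong \mathfrak{D}^{!}\otimes^{\mathbb{L}}_{\mathfrak{D}^{\mathrm{op}}\otimes\mathfrak{D}}M,
$$
where $\mathfrak{D}^{!}$ is the inverse dualizing bimodule, which corresponds to the kernel of the inverse Serre functor. Take $M=\mathfrak{D}\otimes^{\mathbb{L}}_{\mathfrak{B}}\mathfrak{D}$ and apply the tensor–restriction (Morita-type) associativity. The tensor product over $\mathfrak{D}^{\mathrm{op}}\otimes\mathfrak{D}$ then collapses to
$$
\mathfrak{B}\otimes^{\mathbb{L}}_{\mathfrak{B}^{\mathrm{op}}\otimes\mathfrak{B}}\bigl(\mathfrak{D}^{!}|_{\mathfrak{B}}\bigr).
$$
Here $\mathfrak{D}^{!}|_{\mathfrak{B}}$ is the restriction of $\mathfrak{D}^{!}$ to $\mathfrak{B}$, which is precisely the normal bimodule $\mathfrak{D}_{\mathfrak{B}}^{-1}$ of the definition. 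So this expression is $\NHH^{\bullet}(\CB,X)$ by definition, which completes the identification.

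\textbf{Main obstacle.} I expect Step 2 to be the hard part. Step 1 is formal. Step 2 requires checking carefully that three things are compatible at the DG level:
\begin{itemize}
\item the identification of $P_{\CB}$ with $\mathfrak{D}\otimes_{\mathfrak{B}}\mathfrak{D}$, which uses admissibility of $\CB$ to get a well-defined projection kernel;
\item the smoothness isomorphism $\RHom(\mathfrak{D},-)\cong\mathfrak{D}^{!}\otimes-$;
\item Kuznetsov's precise definition of $\mathfrak{D}_{\mathfrak{B}}^{-1}$.
\end{itemize}
It is also necessary to check that the map to $\HH^{\bullet}(X)$ obtained this way is the expected restriction map.
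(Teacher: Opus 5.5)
This statement is not proved in the paper (it is quoted from \cite[Theorem 3.3]{Kuz15}). Your argument is correct and is essentially Kuznetsov's: apply $\RHom(\Delta_{\ast}\CO_X,-)$ to $P_{\CB}\to\Delta_{\ast}\CO_X\to P_{\CA}$, use $\RHom(P_{\CB},P_{\CA})=0$ to identify the third term with $\HH^{\bullet}(\CA)$, and use smoothness plus base change to rewrite the first term as $\mathfrak{D}^{!}\otimes^{\mathbb{L}}_{\mathfrak{D}^{\mathrm{op}}\otimes\mathfrak{D}}(\mathfrak{D}\otimes^{\mathbb{L}}_{\mathfrak{B}}\mathfrak{D})\cong\mathfrak{B}\otimes^{\mathbb{L}}_{\mathfrak{B}^{\mathrm{op}}\otimes\mathfrak{B}}\mathfrak{D}^{-1}_{\mathfrak{B}}$, with the last check you list (identifying the second map with the restriction morphism) not needed, since the statement only asserts that the triangle exists.
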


From now on, let $\mathbb{E}:=\{E_{1}, \cdots, E_{n}\}$ be an exceptional collection on $X$.
We denote by $\mathcal{E} \subset \DC(X)$ the full triangulated subcategory generated by $E_{1},\cdots,E_{n}$.

\begin{defn}
The {\it height} $h(\mathbb{E})$ of the exceptional collection $\mathbb{E}$ is defined by
$$
h(\mathbb{E}):=\min \{ k\in \ZN \mid \mathrm{NHH}^{k}(\mathcal{E},X)\neq 0\}.
$$
\end{defn}

The significance of the height lies in the following criterion, which provides an effective tool for establishing the non-fullness of an exceptional collection.

\begin{lem}[{\cite[Proposition 6.1]{Kuz15}}]\label{not-full-criterion}
If the height $h(\mathbb{E})>0$, then the exceptional collection $\mathbb{E}$ is not full.
\end{lem}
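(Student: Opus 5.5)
The plan is to argue by contraposition: assume $\mathbb{E}$ is full and show that $h(\mathbb{E})=0$. The tool is the distinguished triangle of Theorem \ref{thm:nhh_triang}. Apply it to the semi-orthogonal decomposition $\DC(X)=\langle \CA,\mathcal{E}\rangle$, where $\CA=\mathcal{E}^{\perp}$ is the orthogonal complement provided by Bondal's theorem. Fullness of $\mathbb{E}$ means precisely that $\CA=0$.

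First, observe what happens when $\CA=0$. In the triangle $P_{\mathcal{E}}\to\Delta_{\ast}\CO_{X}\to P_{\CA}$ we then have $P_{\CA}\in \CA\boxtimes\DC(X)=0$. Hence $\HH^{\bullet}(\CA)=\Ext^{\bullet}(P_{\CA},P_{\CA})=0$.

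Next, Theorem \ref{thm:nhh_triang} gives a distinguished triangle
$$
\NHH^{\bullet}(\mathcal{E},X)\to \HH^{\bullet}(X)\to \HH^{\bullet}(\CA).
$$
Taking its long exact cohomology sequence, with third term zero, yields isomorphisms $\NHH^{k}(\mathcal{E},X)\cong \HH^{k}(X)$ for all $k\in\ZN$.

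Then compute the low-degree Hochschild cohomology of $X$. By definition $\HH^{k}(X)=\Ext^{k}(\Delta_{\ast}\CO_{X},\Delta_{\ast}\CO_{X})$, which vanishes for $k<0$ because $\Delta_{\ast}\CO_X$ is a sheaf. In degree zero,
$$
\HH^{0}(X)=\Hom(\Delta_{\ast}\CO_{X},\Delta_{\ast}\CO_{X})\cong H^{0}(X,\CO_{X})=\CN\neq 0,
$$
since $\Delta_{\ast}$ is fully faithful on sheaves and $X$ is a connected projective variety.

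Combining these, $\NHH^{k}(\mathcal{E},X)=0$ for $k<0$ while $\NHH^{0}(\mathcal{E},X)\neq 0$. So $h(\mathbb{E})=0$, contradicting $h(\mathbb{E})>0$, and therefore $\mathbb{E}$ is not full.

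The only delicate point is the identification $\NHH^{\bullet}(\mathcal{E},X)\cong\HH^{\bullet}(X)$. It requires checking that the triangle of Theorem \ref{thm:nhh_triang} is compatible with the grading, so that one genuinely gets a long exact sequence in each degree. This is built into Kuznetsov's construction, where the triangle arises from the restriction morphism of DG bimodules. Everything else is formal.
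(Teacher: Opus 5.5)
Your argument is correct and is essentially the standard proof of \cite[Proposition 6.1]{Kuz15}, which the paper cites rather than reproduces: fullness forces $\CA=0$, so Theorem \ref{thm:nhh_triang} identifies $\NHH^{\bullet}(\mathcal{E},X)$ with $\HH^{\bullet}(X)$, and $\HH^{0}(X)=\CN\neq 0$ gives $h(\mathbb{E})=0$. Only $\NHH^{0}(\mathcal{E},X)\neq 0$ is actually needed, since it already gives $h(\mathbb{E})\leq 0$, so your vanishing in negative degrees is correct but not required.
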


To determine the height and the normal Hochschild cohomology, we use the following spectral sequence:

\begin{prop}[{\cite[Proposition 3.7]{Kuz15}}]\label{NHH--spectral-prop}
There exists a spectral sequence converging to the normal Hochschild cohomology
$$
E_{1}^{-p,q} \Rightarrow \NHH^{q-p}(\mathcal{E},X),
$$
where $E_{1}^{-p,q}$ is given by 
\begin{equation*}
\bigoplus_{\substack{1 \leq a_{0}<\cdots<a_{p} \leq n \\ k_{0}+\cdots+ k_{p}= q}} \Ext^{k_{0}}(E_{a_{0}}, E_{a_{1}}) \otimes \cdots \otimes \Ext^{k_{p-1}}(E_{a_{p-1}}, E_{a_{p}}) \otimes \Ext^{k_{p}}(E_{a_{p}}, \mathcal{S}^{-1}(E_{a_{0}})),
\end{equation*}
where $\mathcal{S}(-):=-\otimes \omega_{X}[\dim X]$ is the Serre functor of $\DC(X)$ and $\omega_{X}$ is the canonical sheaf of $X$.
Moreover, the differential on the $E_{1}$-page is given by the Yoneda product of Ext groups.
\end{prop}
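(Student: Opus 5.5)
The plan is to compute $\NHH^{\bullet}(\mathcal{E},X)$ directly from its definition, by resolving the diagonal bimodule of a small DG model of $\mathcal{E}$. The directedness of an exceptional collection makes this resolution finite and explicit. The spectral sequence will then be the one attached to the bar-length filtration.

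\textbf{Step 1: a directed DG model.} Let $\mathfrak{B}$ be the DG subcategory of the \v{C}ech enhancement $\mathfrak{D}$ with objects $E_{1},\dots,E_{n}$. I would pass to a quasi-equivalent model $\mathfrak{B}'$ that is strictly directed:
\begin{itemize}
\item $\mathfrak{B}'(E_{i},E_{i})=\CN$;
\item $\mathfrak{B}'(E_{i},E_{j})=0$ for $i>j$;
\item $\mathfrak{B}'(E_{i},E_{j})$ for $i<j$ is a complex computing $\RHom(E_{i},E_{j})$.
\end{itemize}
This is possible because the collection is exceptional: one truncates the endomorphism complexes and replaces the backward Hom complexes, which are acyclic, by zero. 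Normal Hochschild cohomology is invariant under quasi-equivalence, so it suffices to compute it for $\mathfrak{B}'$. Put $R=\CN^{n}$, the semisimple algebra of identities, and let $\mathfrak{B}'_{+}=\bigoplus_{i<j}\mathfrak{B}'(E_{i},E_{j})$ be the augmentation ideal.

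\textbf{Step 2: the normal bimodule and the resolution.} Next I identify the normal bimodule $\mathfrak{D}^{-1}_{\mathfrak{B}}$, which is the restriction to $\mathfrak{B}\times\mathfrak{B}$ of the inverse dualizing bimodule of $\mathfrak{D}$. For a smooth projective $X$, the inverse dualizing bimodule of $\DC(X)$ is the kernel of $\mathcal{S}^{-1}$, so on objects it is
$$(E_{i},E_{j})\mapsto \RHom(E_{j},\mathcal{S}^{-1}(E_{i})).$$
I expect this identification, together with keeping track of the variance conventions, to be the main point requiring care. I would check it against $\HH^{\bullet}(X)=\Ext^{\bullet}(\CO_{\Delta},\CO_{\Delta})$ in the case $\mathcal{E}=\DC(X)$.

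Then I resolve the diagonal bimodule $\mathfrak{B}'$ by the normalized bar resolution relative to $R$:
$$\bigoplus_{p\ge 0}\mathfrak{B}'\otimes_{R}(\mathfrak{B}'_{+}[1])^{\otimes_{R}p}\otimes_{R}\mathfrak{B}'.$$
This is semi-free, so it computes the derived tensor product. By directedness, $(\mathfrak{B}'_{+})^{\otimes_{R}p}$ is a sum over strictly increasing chains $a_{0}<a_{1}<\dots<a_{p}$, and it vanishes for $p\ge n$. Tensoring over $\mathfrak{B}'^{\mathrm{op}}\otimes\mathfrak{B}'$ with $\mathfrak{D}^{-1}_{\mathfrak{B}}$ cancels the two outer copies of $\mathfrak{B}'$. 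What remains is the totalization of
$$\bigoplus_{a_{0}<\dots<a_{p}}\mathfrak{B}'(E_{a_{0}},E_{a_{1}})\otimes\cdots\otimes\mathfrak{B}'(E_{a_{p-1}},E_{a_{p}})\otimes \RHom(E_{a_{p}},\mathcal{S}^{-1}(E_{a_{0}})).$$
The $p$-th piece is placed in bar degree $-p$. The bar differential composes adjacent factors, including composing the last Hom with $\RHom(E_{a_{p}},\mathcal{S}^{-1}(E_{a_{0}}))$ through the bimodule action.

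\textbf{Step 3: the spectral sequence.} Filter the totalization by bar length $p$. The filtration is finite, since $0\le p\le n-1$, so the associated spectral sequence converges to $\NHH^{\bullet}(\mathcal{E},X)$. The $E_{0}$-differential is the internal one. By the Künneth formula over $\CN$, the $E_{1}$-term in bidegree $(-p,q)$ is the stated direct sum over $k_{0}+\dots+k_{p}=q$ of the tensor products of $\Ext$ groups, with total degree $q-p$. The $d_{1}$-differential is the map induced by the bar differential on cohomology, which is the alternating sum of Yoneda compositions of adjacent $\Ext$ classes. This gives the stated description of the differential on the $E_{1}$-page.
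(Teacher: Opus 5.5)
The paper gives no proof of its own here; it simply cites \cite[Proposition 3.7]{Kuz15}. Your outline is correct and is essentially Kuznetsov's argument: pass to a strictly directed model over $R=\CN^{n}$, resolve the diagonal by the normalized bar resolution, and tensor with the normal bimodule $(E_i,E_j)\mapsto\RHom(E_j,\mathcal{S}^{-1}(E_i))$ to get the cyclic complex over chains $a_0<\cdots<a_p$. Filtering by bar length $p\le n-1$ then gives a finite filtration, hence convergence, with the stated $E_1$-page by K\"unneth and $d_1$ given by the alternating sum of Yoneda compositions.
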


In practice, we use 
the pseudoheight of an exceptional collection to determine the height.  
Recall that the {\it relative height} of two objects $F, G$ in $\DC(X)$ is defined by
$$
e(F,G):=\inf\{k\in \ZN \mid \ext^{k}(F,G)\neq 0\}.
$$
 
\begin{defn}\label{pseudoheight-defn} 
The {\it anticanonical pseudoheight} of the exceptional  collection $\mathbb{E}$ is defined by 
$$
\mathrm{ph}_{ac}(\mathbb{E}):=\min_{1\leq a_{0} < \cdots < a_{p} \leq n} \Big(\sum_{i=0}^{p-1}e(E_{a_{i}}, E_{a_{i+1}})+ e(E_{a_{p}}, E_{a_{0}} \otimes \omega_{X}^{-1})-p \Big),
$$
and the {\it pseudoheight} $\mathrm{ph}(\mathbb{E})$ of  $\mathbb{E}$ is defined by
$$
\mathrm{ph}(\mathbb{E}):= \mathrm{ph}_{ac}(\mathbb{E})+\dim X.
$$ 
\end{defn}

By Lemma \ref{not-full-criterion}, the following criterion is especially effective in practice for detecting the non-fullness of an exceptional collection.

\begin{lem}[{\cite[Lemma 4.5]{Kuz15}}]\label{ht-ge-ph-nonfull}
For any exceptional collection $\mathbb{E}$ on $X$, the height $h(\mathbb{E})\geq \mathrm{ph}(\mathbb{E})$.
In particular, if $\mathrm{ph}(\mathbb{E})>0$, then the exceptional collection $\mathbb{E}$ is not full. 
\end{lem}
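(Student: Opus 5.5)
The plan is to read off a vanishing range for $\NHH^{\bullet}(\mathcal{E},X)$ directly from the $E_{1}$-page of the spectral sequence in Proposition \ref{NHH--spectral-prop}. Then I would apply Lemma \ref{not-full-criterion}. Write $N:=\dim X$. I will show that $E_{1}^{-p,q}=0$ whenever $q-p<\mathrm{ph}(\mathbb{E})$.

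First, rewrite the last tensor factor in terms of $\omega_{X}^{-1}$. Since $\mathcal{S}^{-1}(E)=E\otimes\omega_{X}^{-1}[-N]$, we have
$$
\Ext^{k_{p}}(E_{a_{p}},\mathcal{S}^{-1}(E_{a_{0}}))=\Ext^{k_{p}-N}(E_{a_{p}},E_{a_{0}}\otimes\omega_{X}^{-1}).
$$
Fix $1\leq a_{0}<\cdots<a_{p}\leq n$ and $k_{0}+\cdots+k_{p}=q$, and suppose the corresponding tensor summand of $E_{1}^{-p,q}$ is nonzero. Then every factor is nonzero, so by the definition of the relative height
$$
k_{i}\geq e(E_{a_{i}},E_{a_{i+1}}) \quad (0\leq i\leq p-1), \qquad k_{p}-N\geq e(E_{a_{p}},E_{a_{0}}\otimes\omega_{X}^{-1}).
$$
The case $p=0$, where only the single factor $\Ext^{k_{0}}(E_{a_{0}},\mathcal{S}^{-1}E_{a_{0}})$ appears, is included in this.

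Summing these inequalities and subtracting $p$ gives
$$
q-p\;\geq\;\sum_{i=0}^{p-1}e(E_{a_{i}},E_{a_{i+1}})+e(E_{a_{p}},E_{a_{0}}\otimes\omega_{X}^{-1})-p+N.
$$
Taking the minimum over all chains as in Definition \ref{pseudoheight-defn}, this is at least $\mathrm{ph}_{ac}(\mathbb{E})+N=\mathrm{ph}(\mathbb{E})$. Hence every $E_{1}$-term of total degree $q-p<\mathrm{ph}(\mathbb{E})$ vanishes.

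Next I pass to the abutment. The only point needing care is convergence. The index $p$ ranges over $0\leq p\leq n-1$, since a chain $a_{0}<\cdots<a_{p}$ in $\{1,\dots,n\}$ has at most $n$ terms. So the spectral sequence comes from a finite filtration and converges strongly. For each total degree $k$, $\NHH^{k}(\mathcal{E},X)$ then has a finite filtration whose graded pieces are subquotients of the $E_{1}^{-p,q}$ with $q-p=k$. It follows that $\NHH^{k}(\mathcal{E},X)=0$ for all $k<\mathrm{ph}(\mathbb{E})$, that is, $h(\mathbb{E})\geq\mathrm{ph}(\mathbb{E})$.

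Finally, if $\mathrm{ph}(\mathbb{E})>0$, then $h(\mathbb{E})>0$, and Lemma \ref{not-full-criterion} shows that $\mathbb{E}$ is not full. I expect no real obstacle beyond the bookkeeping of the Serre-functor shift by $N$ and the finiteness of the filtration, both of which are handled above.
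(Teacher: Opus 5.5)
Your argument is correct and is essentially the standard proof of \cite[Lemma 4.5]{Kuz15}: the $E_{1}$-terms of the spectral sequence in Proposition~\ref{NHH--spectral-prop} vanish in total degree below $\mathrm{ph}(\mathbb{E})$, and non-fullness then follows from Lemma~\ref{not-full-criterion}. The paper does not reprove this; it simply cites \cite[Lemma 4.5]{Kuz15}, so your proposal fills in the same argument the citation relies on.
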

 
%=====================================================================

\subsection{Linear systems and Dumnicki's diagram-cutting method}
Let $X$ be the blow-up of $\PB^{2}$ at $n\geq 10$ points $p_{1}, \cdots, p_{n}$ in general position.
Let $H$ be the pullback of the hyperplane class on $\PB^{2}$, and let $E_{i}$ be the exceptional divisor over $p_{i}$. 
Then, the Picard group 
$$
\Pic(X)=\ZN H\oplus \bigoplus_{i=1}^{n}\ZN E_{i}
$$
and the canonical divisor $K_{X}=-3H+\sum_{i=1}^{n} E_{i}$
with the intersection numbers
$$
H^{2}=1, E_{i}^{2}=-1, H\cdot E_{i}=0,\, 1\leq i\leq n, \,\textrm{ and }\, E_{i}\cdot E_{j}=0 \,(i\neq j).
$$
Therefore, for any divisor $D$ on $X$, it can be expressed as the form
$$
D=d H-\sum_{i=1}^{n} m_{i}E_{i},
$$
where $d, m_{i}\in \ZN$.
If $d>0$ and $m_{i}\geq 0$, $1\leq i\leq n$, then
$$
H^{0}(X,\CO_{X}(D))
\cong 
H^{0}(\PB^{2}, I_{p_{1}}^{m_{1}}I_{p_{2}}^{m_{2}}\cdots I_{p_{n}}^{m_{n}}(d)),
$$
where $I_{p_{i}}$ is the ideal sheaf of the point $p_{i}$ and $I_{p_{i}}^{m_{i}}$ is the $m_{i}$-th power of $I_{p_{i}}$. 
In this case, we denote by $\mathcal{L}(d,m_{1},\cdots,m_{n})$ the corresponding linear system of plane curves of degree $d$ through general points with the indicated multiplicities. 
If there are the same multiplicities, we denote
$\mathcal{L}(d,m_{i_{1}}^{k_{1}},\cdots,m_{i_{s}}^{k_{s}}):=\mathcal{L}(d,m_{1},\cdots,m_{n})$, where $k_{1}+\cdots+k_{s}=n$. 

In \cite{Dum07}, Dumnicki introduced the so-called {\it diagram-cutting method} for proving non-speciality of linear systems of plane curves.

\begin{defn}
Let $\Delta \subset \NN\times \NN$ be a finite subset (called a {\it diagram}), and $m_{1},\cdots, m_{n}\in \NN^{\ast}$. 
We define the linear system of curves $\mathcal{L}(\Delta,m_{1},\cdots,m_{n})$ to be the projective space of all
plane curves (that is, non-zero polynomials) generated by monomials with
exponents from $\Delta$ having multiplicities at least $m_{1},\cdots, m_{n}$ at $n$ points in general position.
\end{defn}

For an integer $d>0$, we define the diagram
$$
\Delta_{d}:=\{(a,b)\in \NN\times \NN \mid a+b\le d\}.
$$
Then, we have 
$$
\mathcal{L}(\Delta_{d},m_{1},\cdots,m_{n})=\mathcal{L}(d,m_{1},\cdots,m_{n})=|D|,
$$
where the divisor $D:=dH-\sum_{i=1}^{n} m_{i}E_{i}$.

\begin{defn}
Let $d, m_{1},\cdots, m_{n}\in \NN^{\ast}$.
A partition $\Delta_{d}=\mathrm{D}_{1}\cup\cdots\cup \mathrm{D}_{n}$ of the diagram $\Delta_{d}$ is called {\it affine of type $(d, m_{1}, \cdots, m_{n})$} if there exist affine functions 
$$
f_{i}:\RN^{2} \rightarrow \RN,\; 
f_{i}(x,y):=  
\alpha_{i}x+\beta_{i}y+\lambda_{i},\; \alpha_{i},\beta_{i},\lambda_{i}\in \RN,
$$
$R_{0}:=\Delta_{d}$ and $R_{i}:=R_{i-1}\cap\{(x,y)\mid f_{i}(x,y)<0\}$\, $(1\leq i\leq n-1)$ such that
$$
\mathrm{D}_{i}=R_{i-1}\cap\{(x,y)\mid f_{i}(x,y)>0\}
$$ 
and $\mathrm{D}_{n}=R_{n-1}$. 
We further require that $f_{i}$ does not vanish at any point of $R_{i-1}$ for $1 \leq i\leq n-1$.
\end{defn}
 
To identify divisors with no global sections, we mainly use the following Dumnicki's diagram-cutting method:

\begin{prop}\label{Dumnicki-method}
Let $\Delta_{d}=\mathrm{D}_{1}\cup\cdots\cup \mathrm{D}_{n}$ be an affine partition of type $(d,m_{1},\cdots,m_{n})$.
If, for every $1\leq i\leq n$, there exist $m_{i}$ horizontal (resp. vertical) lines  $\ell_{i,1},\cdots,\ell_{i,m_{i}}$
such that 
$$
\mathrm{D}_{i}\subset\bigcup_{k=1}^{m_{i}}\ell_{k}
\;
\textrm{ and }
\; 
\#(\mathrm{D}_{i}\cap \ell_{i,k})\leq k,\;  k=1,\cdots,m_{i},
$$
then $\mathcal{L}(\Delta_{d},m_{1},\cdots, m_{n})=\emptyset$.
\end{prop}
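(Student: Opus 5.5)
The plan is to follow Dumnicki's original argument in two stages. First, a \emph{single-point lemma}: if a diagram $\mathrm{D}$ satisfies the line condition for multiplicity $m$, then $\mathcal{L}(\mathrm{D},m)=\emptyset$. Second, a \emph{cutting lemma}: emptiness of the linear systems on the pieces of an affine partition forces emptiness on $\Delta_{d}$. By symmetry in $x\leftrightarrow y$ we treat only horizontal lines.

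\emph{Single-point lemma.} The torus $(\CN^{\ast})^{2}$ rescales monomials without changing supports, so a general point may be taken to be $p=(1,1)$. We will use an auxiliary fact:
\begin{quote}
a nonzero one-variable polynomial with $s$ distinct monomials vanishes at $1$ to order at most $s-1$.
\end{quote}
To prove it, apply the operators $(t\,\tfrac{d}{dt})^{j}$ for $0\le j\le s-1$ and evaluate at $t=1$. Vanishing to order $s$ at $1$ implies all these evaluations vanish. This gives a Vandermonde system in the distinct exponents, so all coefficients are zero.

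Now let $\mathrm{D}\subset\bigcup_{k=1}^{m}\{y=b_{k}\}$ with $\#(\mathrm{D}\cap\{y=b_{k}\})\le k$. Write $f=\sum_{k}y^{b_{k}}g_{k}(x)$, where $g_{k}$ has at most $k$ monomials. If $f$ has multiplicity $\ge m$ at $(1,1)$, then for each $i<m$ the polynomial
$$h_{i}(y)=\sum_{k}g_{k}^{(i)}(1)\,y^{b_{k}}$$
vanishes to order $\ge m-i$ at $y=1$. We argue inductively on $i$.
\begin{itemize}
\item At step $i$, suppose $g_{1}=\cdots=g_{i}=0$ is already known. Then $h_{i}$ has at most $m-i$ monomials and vanishes to order $\ge m-i$, so $g_{k}^{(i)}(1)=0$ for all $k>i$.
\item In particular $g_{i+1}$ has at most $i+1$ monomials and vanishes to order $\ge i+1$ at $x=1$. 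By the auxiliary fact, $g_{i+1}=0$.
\end{itemize}
This yields $f=0$, as required.

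\emph{Cutting lemma.} Choose the points $p_{i}$ depending on a parameter $t$, with coordinates $(t^{\alpha_{i}}u_{i},t^{\beta_{i}}v_{i})$ for general $u_{i},v_{i}$, so that the valuation weights are governed by the affine functions $f_{i}$. Suppose the system on $\Delta_{d}$ is nonempty for general points; then it is nonempty for this degenerating family over $\CN(t)$. Take a nonzero section over $\CN[[t]]$ normalized so that its reduction mod $t$ is nonzero, and order the pieces $\mathrm{D}_{1},\mathrm{D}_{2},\dots$ according to the successive cuts.

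Consider the first piece $\mathrm{D}_{i}$ on which the coefficients have minimal $t$-adic order after twisting by $t^{-\lambda}$ for the weight $f_{i}$. Rescaling coordinates centred at $p_{i}$ shows that the lowest-order part is supported on $\mathrm{D}_{i}$ alone. The region where $f_{i}<0$ contributes only higher order, which is where $f_{i}\neq 0$ on $R_{i-1}$ is used. This leading part is a nonzero polynomial with support in $\mathrm{D}_{i}$ and multiplicity $\ge m_{i}$ at the general point $(u_{i},v_{i})$, contradicting the single-point lemma.

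The main obstacle is making the cutting step rigorous: choosing the exponents of $t$ so that, for each $i$ in turn, the terms on $\mathrm{D}_{i}$ dominate near $p_{i}$ while the previously cut pieces have already been shown to vanish. I would handle this by induction on $i$. At each stage, the coefficients on $\mathrm{D}_{1},\dots,\mathrm{D}_{i-1}$ are shown to be divisible by arbitrarily high powers of $t$, hence zero. The coefficients on $\mathrm{D}_{i}$ are then killed using the leading-term argument and the single-point lemma, and the induction proceeds until $\mathrm{D}_{n}=R_{n-1}$.
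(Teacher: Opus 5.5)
Your single-point lemma is correct: the torus rescaling to $(1,1)$, the Vandermonde argument with $(t\,\tfrac{d}{dt})^{j}$, and the induction forcing $g_{1}=\cdots=g_{m}=0$ all check out. The paper gives no proof of this statement; it simply cites \cite[Theorem 14]{Dum07} and \cite[\S 3.10]{BBC+12}.

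The genuine gap is in the cutting step. When you twist $p_{i}$ by the weight $-f_{i}$, the terms on $\mathrm{D}_{i}$ strictly dominate only those on $R_{i}=R_{i-1}\cap\{f_{i}<0\}$. On the earlier pieces $\mathrm{D}_{1},\dots,\mathrm{D}_{i-1}$ the function $f_{i}$ has no sign control. So a coefficient there whose $t$-order exceeds the minimum by only $1$ can still have the smallest twisted order if $f_{i}$ is large there. The leading part at $p_{i}$ is then not supported on $\mathrm{D}_{i}$, and the single-point lemma gives nothing.

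Your proposed remedy is to first show that the coefficients on $\mathrm{D}_{1},\dots,\mathrm{D}_{i-1}$ are divisible by arbitrarily high powers of $t$. Nothing in your argument produces this. The twist at $p_{1}$ isolates $\mathrm{D}_{1}$ only if the $R_{1}$-coefficients do not have much smaller order than the $\mathrm{D}_{1}$-coefficients. Ruling that out is exactly the emptiness of the residual system $\mathcal{L}(R_{1};m_{2},\dots,m_{n})$, which you do not have at the start of your induction. In short, your induction runs in the wrong direction.

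The repair is to make one cut at a time and induct from the back. First prove a one-cut lemma:
\begin{itemize}
\item Setting: $D=D'\sqcup D''$ with an affine $f$ (after a harmless perturbation, with integer coefficients $f=\alpha x+\beta y+\lambda$), $f>0$ on $D'$ and $f<0$ on $D''$. Suppose $\mathcal{L}(D';Q')=\emptyset$ and $\mathcal{L}(D'';Q'')=\emptyset$ for general point sets $Q',Q''$. Then $\mathcal{L}(D;Q'\cup Q'')=\emptyset$.
\item Degeneration: place $Q''$ at general points and the points of $Q'$ at $(t^{-\alpha}u_{k},t^{-\beta}v_{k})$ with $(u_{k},v_{k})$ general. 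Take a nonzero section over $\CN(t)$ with coefficient orders $o(c)$, and let $S$ be the set where $o$ is minimal.
\item Case $S\subset D''$: the leading part at the untwisted $Q''$-points is a nonzero member of $\mathcal{L}(D'';Q'')$, a contradiction.
\item Otherwise: choose $q\in S\cap D'$. For every $c\in D''$ one has $o(c)-f(c)>o(c)\geq o(q)>o(q)-f(q)$. So the leading part at the $Q'$-points is a nonzero member of $\mathcal{L}(D';Q')$, again a contradiction.
\item Conclusion: semicontinuity of the rank of the condition matrix transfers emptiness from these $\CN(t)$-points to general complex points.
\end{itemize}
Now apply this with $D'=\mathrm{D}_{i}$ and $D''=R_{i}$ for $i=n-1,\dots,1$, starting from $\mathcal{L}(R_{n-1};m_{n})=\mathcal{L}(\mathrm{D}_{n};m_{n})=\emptyset$. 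Your leading-term idea works verbatim at each step, because only one separating line is involved.

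A single degeneration can also be made to work, but only with strongly separated weights $-N_{i}f_{i}$, $N_{1}\gg N_{2}\gg\cdots$, together with an analysis of the Leibniz expansion of a maximal minor. It does not work through the vanishing claim you propose.
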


\begin{proof}
This is a direct consequence of \cite[Theorem 14]{Dum07}; see also \cite[\S 3.10, Fact 1, Fact 2 and Fact 3]{BBC+12}.
\end{proof}

Additionally, we shall frequently use the notion of Cremona transformations of linear systems:

\begin{defn} 
If the system $\mathcal{L}(d,m_{1},\cdots,m_{n})$ satisfies the conditions:
\begin{enumerate}
\item[(1)] $c:=m_{1}+m_{2}+m_{3}-d>0$,
\item[(2)] $m_{i} \geq c$, for $i=1,2,3$,
\end{enumerate}
then the system $\mathcal{L}(d-c,m_{1}-c, m_{2}-c, m_3-c,m_4, \cdots, m_{n})$ is called a {\it Cremona transformation} of $\mathcal{L}(d,m_{1},\cdots,m_{n})$ at points $p_{1}$, $p_{2}$ and $p_{3}$.
\end{defn}

In this case, we denote by $D:=dH-\sum_{i=1}^{n}m_{i} E_{i}$ and $D^{\prime}:=(d-c)H-\sum_{i=1}^{3}(m_{i}-c) E_{i}-\sum_{i=4}^{n}m_{i} E_{i}$ the corresponding divisors.
We also call $D^{\prime}$ a {\it Cremona transformation} of $D$.
Then, by Riemann--Roch theorem, we have $\chi(D)=\chi(D^{\prime})$. 
Moreover, the following result is well-known:

\begin{prop}\label{Cremnona-transform-consequ}
Suppose that $D^{\prime}$ is a {\it Cremona transformation} of $D$. 
Then 
$$
h^{0}(D)=h^{0}(D^{\prime}).  $$ 
\end{prop}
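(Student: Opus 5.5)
The plan is to realize the Cremona transformation of linear systems as a change of basis of $\Pic(X)$ induced by an isomorphism of surfaces. Once this is done, $h^{0}(D)=h^{0}(D^{\prime})$ follows because an isomorphism of varieties preserves dimensions of spaces of global sections. The hypotheses (1) and (2) will play no essential role: the divisor class computation below works regardless. They only guarantee that $D^{\prime}$ is again of the form $\mathcal{L}(d^{\prime},m^{\prime}_{1},\cdots,m^{\prime}_{n})$ with non-negative multiplicities, so that it is a linear system in the sense of the paper.

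\emph{Step 1: the geometric identification.} Since $p_{1},p_{2},p_{3}$ are in general position, they are not collinear. Consider the standard quadratic Cremona map $\varphi:\PB^{2}\dashrightarrow\PB^{2}$ centered at them. It is resolved by the blow-up $Y=\mathrm{Bl}_{p_{1},p_{2},p_{3}}\PB^{2}$: the strict transforms of the three lines $\overline{p_{j}p_{k}}$ are $(-1)$-curves, and contracting them yields $\PB^{2}$ again, with three new points $q_{1},q_{2},q_{3}$. Write $q_{i}:=\varphi(p_{i})$ for $i\geq 4$; these points avoid the indeterminacy locus and the contracted lines. Then $X$ is isomorphic to the blow-up $X^{\prime}$ of $\PB^{2}$ at $q_{1},\cdots,q_{n}$. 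Denoting by $H^{\prime},E^{\prime}_{i}$ the standard classes on $X^{\prime}$, the classical formulas are
$$
H=2H^{\prime}-E^{\prime}_{1}-E^{\prime}_{2}-E^{\prime}_{3},\quad E_{i}=H^{\prime}-E^{\prime}_{j}-E^{\prime}_{k}\ (\{i,j,k\}=\{1,2,3\}),\quad E_{i}=E^{\prime}_{i}\ (i\geq 4).
$$

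\emph{Step 2: the class computation.} Substituting into $D=dH-\sum m_{i}E_{i}$ gives
$$
D=(2d-m_{1}-m_{2}-m_{3})H^{\prime}-\sum_{i=1}^{3}(d-m_{j}-m_{k})E^{\prime}_{i}-\sum_{i\geq4}m_{i}E^{\prime}_{i}.
$$
With $c=m_{1}+m_{2}+m_{3}-d$, the coefficients are $2d-m_{1}-m_{2}-m_{3}=d-c$ and $d-m_{j}-m_{k}=m_{i}-c$. Hence, under the isomorphism $X\cong X^{\prime}$, the line bundle $\CO_{X}(D)$ corresponds to $\CO_{X^{\prime}}(D^{\prime})$, and therefore $h^{0}(X,D)=h^{0}(X^{\prime},D^{\prime})$.

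\emph{Step 3: genericity, the main subtle point.} We must compare $h^{0}(X^{\prime},D^{\prime})$ with $h^{0}$ of $D^{\prime}$ on the blow-up at the original general points. The assignment $(p_{1},\cdots,p_{n})\mapsto(q_{1},\cdots,q_{n})$ is a birational self-map of $(\PB^{2})^{n}$, defined on the dense open set where $p_{1},p_{2},p_{3}$ are non-collinear and the remaining points avoid the three lines. Indeed, performing the Cremona map at $q_{1},q_{2},q_{3}$ recovers the original configuration up to projective equivalence. A dominant map sends the generic point to the generic point, so if $(p_{i})$ lies in a suitable dense open set then so does $(q_{i})$.

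By upper semicontinuity of $h^{0}$ in flat families of blow-ups, both $h^{0}(D)$ and $h^{0}(D^{\prime})$ are constant, equal to their minimal values, on dense open subsets of configuration space. Intersecting these open sets with their preimages under the birational map shows that the general values agree. This proves $h^{0}(D)=h^{0}(D^{\prime})$.
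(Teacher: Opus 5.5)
Your proof is correct, and it is the standard argument for this fact. The paper gives no proof at all: it simply calls the statement well-known, so there is no competing route to compare. You correctly realize the Cremona transformation as the change of basis of $\Pic$ induced by the isomorphism $X\cong X^{\prime}$, and then use semicontinuity to return to the original general configuration.

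One point in Step 3 should be stated more carefully. The assignment $(p_i)\mapsto(q_i)$ is only defined up to the action of $\mathrm{PGL}_3$ on the target, so as written it is not a birational self-map of $(\PB^2)^n$. Two fixes work. You can normalize $p_1,\dots,p_4$ to the standard projective frame, which the involution $[x:y:z]\mapsto[yz:xz:xy]$ carries back to the frame. The map is then a birational involution of the slice $(\PB^2)^{n-4}$ given by the remaining points. Alternatively, you can work with configurations modulo $\mathrm{PGL}_3$.

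Either way the rest of your argument goes through. The loci where $h^{0}(D)$ and $h^{0}(D^{\prime})$ take their generic values are open, dense and $\mathrm{PGL}_3$-invariant, so they meet the slice in dense open sets. Your semicontinuity argument then applies verbatim. You are also right that hypotheses (1) and (2) are not needed for the equality itself; (2) only ensures that $D^{\prime}$ has non-negative multiplicities.
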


%===================================================================

\section{The desired full exceptional collections}\label{desired-FEC}

In this section, we review the construction of the desired full exceptional collections of line bundles on the blow-up of the complex projective plane $\PB^{2}$ at points in general position.
 
From now on, let $X={\rm Bl}_{p_{1},\cdots,p_{n}} \PB^{2}$
be the blow-up of the complex projective plane $\PB^{2}$ at $n$ points $p_{1},\cdots,p_{n}$ in general position, where $n\geq 10$.
Let $\ell \subset \PB^{2}$ be the line through $p_{1}$ and $p_{2}$, and let
$L$ be the strict transform of $\ell$. 
Then $L=H-E_{1}-E_{2}\in \Pic(X)$. 
Since the intersection number $L^{2}=(H-E_{1}-E_{2})^{2}=-1$, so $L$ is a $(-1)$-curve. 
Moreover, all the $(-1)$-curves $L, E_{3},\cdots, E_{n}$ are pairwise disjoint. Since ${\rm Bl}_{p_{1},p_{2}} \PB^{2} \cong {\rm Bl}_{p}\mathbf{F}_{0}$, by contracting the above $(-1)$-curves, we derive the blow-up of $\mathbf{F}_{0}$ at $n-1$ points 
$$
\rho:X\longrightarrow \mathbf{F}_{0},
$$
where $\mathbf{F}_{0}=\PB^{1}\times \PB^{1}$ is the $0$-th Hirzebruch surface.
Geometrically, this is the standard birational model obtained by blowing up $p_1,p_2$ and then contracting the strict transform of the line $\ell$. The additional exceptional curves $E_3,\cdots,E_{n}$ correspond to further blow-ups of points on $\mathbf{F}_{0}$.

We denote by $P$ and $Q$ the two ruling classes on the Hirzebruch surface $\mathbf{F}_{0}$. 
Then, we have
$$
\rho^{\ast}P=H-E_{1},
\;
\rho^{\ast}Q=H-E_{2},
$$
and hence $\rho^{\ast}(P+Q)=2H-E_{1}-E_{2}$.
Moreover, we have the standard full exceptional collection
$$
\DC(\mathbf{F}_{0})=\langle\CO,\CO(P),\CO(Q+aP),\CO(Q+(a+1)P)\rangle,
$$ 
where $a\geq 0$.
By pulling it back to $X$, it gives an admissible subcategory
\begin{eqnarray*}
L\rho^{\ast}\DC(\mathbf{F}_{0})
&=& 
\langle \CO_{X}, \CO_{X}(H-E_{1}), \CO_{X}((a+1)H-aE_{1}-E_{2}), \\
&& \;\;\;\;\; \CO_{X}((a+2)H-(a+1)E_{1}-E_{2})
\rangle    
\end{eqnarray*}
of the derived category $\DC(X)$.
Then one has: 

\begin{prop}
For every integer $a\geq 0$,
there is a full exceptional collection of line bundles
\begin{align}\label{Original-FEC}
\mathbb{D}_{a}&=\{
\CO_{X},\CO_{X}(H-E_{1}-E_{2}),\CO_{X}(E_{3}),\cdots,\CO_{X}(E_{n}), \CO_{X}(H-E_{1}), \nonumber \\ 
 & \;\;\;\;\;\; \CO_{X}((a+1)H-aE_{1}-E_{2}),\CO_{X}((a+2)H-(a+1)E_{1}-E_{2})
\}.     
\end{align}
\end{prop}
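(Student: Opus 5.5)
We will obtain $\mathbb{D}_{a}$ by applying Orlov's blow-up formula \cite{Orl93} to the birational morphism $\rho\colon X\to \mathbf{F}_{0}$ constructed above. Since $\rho$ contracts the pairwise disjoint $(-1)$-curves $L=H-E_{1}-E_{2}$ and $E_{3},\cdots,E_{n}$, it is the blow-up of $\mathbf{F}_{0}$ at $n-1$ distinct points $q_{1},\cdots,q_{n-1}$, with exceptional curves $L,E_{3},\cdots,E_{n}$. Orlov's theorem for the blow-up of a smooth surface at distinct points then gives a semi-orthogonal decomposition
$$
\DC(X)=\big\langle \CO_{L}(-1),\CO_{E_{3}}(-1),\cdots,\CO_{E_{n}}(-1),\, L\rho^{\ast}\DC(\mathbf{F}_{0})\big\rangle .
$$
Here each $\CO_{C}(-1)$ is exceptional. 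The objects supported on different, disjoint exceptional curves are mutually orthogonal, so their relative order is irrelevant.

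Next we insert the full exceptional collection $\langle\CO,\CO(P),\CO(Q+aP),\CO(Q+(a+1)P)\rangle$ of $\mathbf{F}_{0}$. Using $\rho^{\ast}P=H-E_{1}$ and $\rho^{\ast}Q=H-E_{2}$, this turns the last component into the four line bundles displayed before the proposition. The result is a full exceptional collection of length $(n-1)+4=n+3$, of the form
$$
\{\CO_{L}(-1),\CO_{E_{3}}(-1),\cdots,\CO_{E_{n}}(-1),\CO_{X},\CO_{X}(H-E_{1}),\CO_{X}((a+1)H-aE_{1}-E_{2}),\CO_{X}((a+2)H-(a+1)E_{1}-E_{2})\}.
$$

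The remaining step replaces the torsion sheaves by line bundles via mutation. For a $(-1)$-curve $C$ there is the exact sequence $0\to\CO_{X}\to\CO_{X}(C)\to\CO_{C}(C)=\CO_{C}(-1)\to0$. We compute
$$
\RHom(\CO_{C}(-1),\CO_{X})=\RHom(\CO_{X},\CO_{C}(-1)\otimes\omega_{X}[2])^{\ast}=H^{\bullet}(\CO_{C}(-2))^{\ast}[-2]=\CN[-1],
$$
using $K_{X}\cdot C=-1$. Hence mutating $\CO_{X}$ to the left past each $\CO_{C}(-1)$ (equivalently, moving $\CO_{C}(-1)$ to the right past $\CO_{X}$) yields the nontrivial extension $\CO_{X}(C)$. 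Since the curves $C\in\{L,E_{3},\cdots,E_{n}\}$ are disjoint, we can do this simultaneously: first move $\CO_{X}$ to the very front by a left mutation through $\langle\CO_{L}(-1),\cdots,\CO_{E_{n}}(-1)\rangle$. A cleaner route is to right-mutate each $\CO_{C}(-1)$ through $\CO_{X}$, which produces $\CO_{X}(C)$ up to shift, placed right after $\CO_{X}$. Concretely,
$$
{\rm R}_{\CO_{X}}(\CO_{C}(-1))={\rm Cone}\big(\CO_{C}(-1)\to \RHom(\CO_{C}(-1),\CO_{X})^{\ast}\otimes\CO_{X}\big)[-1],
$$
and this is $\CO_{X}(C)[-1]$ up to shift, by the extension sequence above. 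We would double-check the sign and shift conventions here, though they are immaterial for fullness. This gives the full exceptional collection $\{\CO_{X},\CO_{X}(L),\CO_{X}(E_{3}),\cdots,\CO_{X}(E_{n}),\ldots\}$, which is exactly $\mathbb{D}_{a}$.

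As a sanity check (and an alternative argument for exceptionality), we would verify the required vanishing $H^{\bullet}(\CO_{X}(D_{i}-D_{j}))=0$ for $i>j$ directly, using Riemann--Roch and the standard cohomology of $(-1)$-curves. The main obstacle is only bookkeeping: getting the mutation directions and the disjointness argument right, so that the mutated objects are exactly the line bundles $\CO_{X}(C)$ and land in the stated order.
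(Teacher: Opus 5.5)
Your proposal is correct and follows the paper's proof: Orlov's blow-up formula for $\rho\colon X\to\mathbf{F}_{0}$, then inserting the standard collection of $\mathbf{F}_{0}$, then right-mutating each $\CO_{C}(-1)$ through $\CO_{X}$ via $0\to\CO_{X}\to\CO_{X}(C)\to\CO_{C}(-1)\to0$, which is what the paper's ``mutating $\CO_{X}$ to the leftmost'' amounts to. Two small cleanups: with the stated convention, ${\rm R}_{\CO_{X}}(\CO_{C}(-1))$ is exactly $\CO_{X}(C)$ with no shift; and left-mutating $\CO_{X}$ through the torsion block is \emph{not} equivalent to this, since it would place $\CO_{X}(L+E_{3}+\cdots+E_{n})$ in front of the torsion sheaves, so drop that alternative and keep only the right-mutation route.
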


\begin{proof}
Since $\rho:X\rightarrow \mathbf{F}_{0}$ is the blow-up of $\mathbf{F}_{0}$ at the $n-1$ points corresponding to the exceptional divisors $L,E_{3},\cdots,E_{n}$, by Orlov's blow-up formula, one has a semi-orthogonal decomposition
$$
\DC(X)=
\langle
\CO_L(-1), \CO_{E_{3}}(-1),\cdots, \CO_{E_{n}}(-1), L\rho^{\ast}\DC(\mathbf{F}_{0})
\rangle.
$$
Therefore, $\DC(X)$ admits a semi-orthogonal decomposition
\begin{align*}
 \DC(X)= \; &
\langle \CO_L(-1),\CO_{E_{3}}(-1),\cdots,\CO_{E_{n}}(-1),
\CO_{X},
\CO_{X}(H-E_{1}),\\
 & \;\;\; \CO_{X}((a+1)H-aE_{1}-E_{2}),\CO_{X}((a+2)H-(a+1)E_{1}-E_{2})\rangle.   
\end{align*}

Now we will obtain the full exceptional collection by right mutating the objects past $\CO_{X}$. 
In fact, for any $(-1)$-curve $C\subset X$, there exists a short exact sequence
$$
\xymatrix@C=0.5cm{
0\ar[r]& \CO_{X} \ar[r] & \CO_{X}(C) \ar[r] & \CO_{C}(C) \ar[r] & 0.
}
$$
Since $C^{2}=-1$, one has $\CO_{C}(C)\cong \CO_{\PB^{1}}(-1)$. 
Thus, the above exact sequence becomes
$$
\xymatrix@C=0.5cm{
0\ar[r]& \CO_{X}
\ar[r]& \CO_{X}(C)
\ar[r]& \CO_{C}(-1)
\ar[r]& 0.
}
$$
Consequently, we have $\CO_{C}(-1)\in \langle \CO_{X},\CO_{X}(C)\rangle$ and $\CO_{X}(C)\in \langle \CO_{X},\CO_{C}(-1)\rangle$.
Therefore, two exceptional pairs $\{\CO_{C}(-1),\CO_{X}\}$ and $\{\CO_{X},\CO_{X}(C)\}$ generate the same admissible subcategory. 
Then, we will apply this to all the $(-1)$-curves $C=L,E_{3},\cdots,E_{n}$. 
Since $\CO_{X}(L)=\CO_{X}(H-E_{1}-E_{2})$, by mutating $\CO_{X}$ to the leftmost, the torsion sheaves
$$
\CO_{L}(-1),\CO_{E_{3}}(-1),\cdots,\CO_{E_{n}}(-1)
$$
are respectively replaced by the line bundles
$$
\CO_{X}(H-E_{1}-E_{2}),\CO_{X}(E_{3}),\cdots,\CO_{X}(E_{n}).
$$
As a result, the proposition follows.
\end{proof}

\begin{rem}\label{Original-FEC-mutation-equ}
For any two distinct integers $a\geq 0$, the full exceptional collections $\mathbb{D}_{a}$ are mutation-equivalent to each other. 
In fact, for every $a\geq 0$, the collections $\mathbb{D}_{a}$ and $\mathbb{D}_{a+1}$ differ by exactly one exceptional line bundle, so it is straightforward to see that they are related by a right mutation and hence are mutation-equivalent.
\end{rem}

Additionally, by Orlov's blow-up formula, there is a standard full exceptional collection of sheaves
\begin{equation}\label{standard-FEC}
 \langle \CO_{E_{1}}(-1), \cdots, \CO_{E_{n}}(-1), \CO_{X}, \CO_{X}(H), \CO_{X}(2H) \rangle.   
\end{equation}
Then, by taking the right mutations through $\CO_{X}$, it becomes the standard full exceptional collection of line bundles
\begin{equation}\label{standard-FEC-linebund}
\langle\CO_{X},\CO_{X}(E_{1}),\cdots,\CO_{X}(E_{n}),\CO_{X}(H),\CO_{X}(2H)\rangle.    
\end{equation}
For $n=10$ or $n=11$, respectively, this full exceptional collection of line bundles was considered prior to applying the involution \eqref{involution-Pic} in \cite{Kra24} and \cite{KKL+26}.
 
\begin{rem}\label{Ori-FEC-mut-equ-standard}
For each integer $a\geq 0$, the full exceptional collection \eqref{Original-FEC} is mutation-equivalent to \eqref{standard-FEC-linebund}.
In fact, by Remark \ref{Original-FEC-mutation-equ}, we take $a=0$ for example.
Since, for $j\geq 3$ and $i\neq j$, the exceptional pairs $\{\CO_{X}(H-E_{1}-E_{2}), \CO_{X}(E_{j})\}$ and $\{\CO_{X}(E_{i}) ,\CO_{X}(E_{j})\}$ are completely orthogonal, so \eqref{Original-FEC} becomes
$$
\left\langle \CO_{X}, \begin{smallmatrix} \CO_{X}(-E_{9}) \\
\cdots\\ \CO_{X}(-E_{n}) \end{smallmatrix}, \begin{smallmatrix}\CO_{X}(H-E_{1}-E_{2}) \\ \CO_{X}(E_{3}) \\ \cdots \\ \CO_{X}(E_{8})\end{smallmatrix}, \begin{smallmatrix} \CO_{X}(H-E_{1}) \\ \CO_{X}(H-E_{2})\end{smallmatrix}, \CO_{X}(2H-E_{1}-E_{2}) \right\rangle.
$$
Then, taking the left mutations through $\CO_{X}$, we have the following full exceptional collection
$$
\left\langle \begin{smallmatrix} \CO_{E_{9}}(-1) \\ \cdots \\ \CO_{E_{n}}(-1) \end{smallmatrix},  \overbrace{\CO_{X},  \begin{smallmatrix}\CO_{X}(H-E_{1}-E_{2}) \\ \CO_{X}(E_{3}) \\ \cdots \\ \CO_{X}(E_{8})\end{smallmatrix}, \begin{smallmatrix} \CO_{X}(H-E_{1}) \\ \CO_{X}(H-E_{2})\end{smallmatrix},\CO_{X}(2H-E_{1}-E_{2})}^{D^b(X_{1})} \right\rangle,
$$
where $X_{1}$ is the del Pezzo surface of degree $1$.
Since an arbitrary full exceptional collection of sheaves on a del Pezzo surface is mutation-equivalent to the standard full exceptional collection \eqref{standard-FEC} (see \cite[\S 6]{KO95}), so the above full exceptional collection is mutation-equivalent to 
$$
\left\langle \begin{smallmatrix} \CO_{E_{9}}(-1) \\ \cdots \\ \CO_{E_{n}}(-1) \end{smallmatrix},  \CO_{X},  \begin{smallmatrix} \CO_{X}(E_{1})   \\ \cdots \\ \CO_{X}(E_{8})\end{smallmatrix}, \CO_{X}(H), \CO_{X}(2H) \right\rangle.
$$
Finally, taking the right mutations through $\CO_{X}$, it follows that it is mutation-equivalent to \eqref{standard-FEC-linebund}.
\end{rem}

%=============================================================================

\section{Countably many phantoms on ten-point blow-up}\label{10pts-pf-mianthm1}

This section is devoted to the proof of Theorem \ref{mainthm}. 
Let $X$ be the blow-up of $\PB^{2}$ at $10$ points in general position.
Considering the involution
\begin{equation}\label{10pts-involution}
\begin{array}{cccl}
  \iota: & \mathrm{Pic}(X) & \longrightarrow  & \mathrm{Pic}(X)  \\
&D&\longmapsto &
-D-2(D\cdot K_{X})K_{X},
\end{array}
\end{equation}
the Riemann--Roch theorem gives 
$$
\chi(D)=\chi(\iota(D))
$$ 
for any divisor $D$ on $X$.
Applying the involution \eqref{10pts-involution} to the full exceptional collection \eqref{Original-FEC} for $n=10$,
for each integer $a\geq 0$, we have a numerically exceptional collection of line bundles of maximal length
\begin{equation}\label{NEC-10pts-case}
\{\CO_{X},\CO_{X}(A),\CO_{X}(B_{3}),\cdots,\CO_{X}(B_{10}),\CO_{X}(G),\CO_{X}(F_{a}),\CO_{X}(F_{a+1})\} 
\end{equation}
with the divisors 
$$
\begin{array}{r@{\hspace{3pt}}lr@{\hspace{3pt}}lcc}
 A:=&2K_{X}-H+E_{1}+E_{2}, & B_{j}:=&2K_{X}-E_{j},   \\
G:=&4K_{X}-H+E_{1},
& 
F_{a}:=&4(a+1)K_{X}-(a+1)H+aE_{1}+E_{2},
\end{array}
$$
where $3\leq j\leq 10$.
Moreover, the intersection numbers
$$
\begin{array}{r@{\hspace{3pt}}lr@{\hspace{3pt}}lc@{\hspace{3pt}}cc@{\hspace{3pt}}cc@{\hspace{3pt}}c}
A^{2} &=-1, &
A\ldotp K_{X}&=-1, &
A\ldotp B_{j}&=0, & 
A\ldotp G&=0, &
A\ldotp F_{a}&=0, \\
B_{j}^{2}&=-1, &
B_{j}\ldotp K_{X} &=-1, &
B_{j}\ldotp G &=0, &
B_{j}\ldotp F_{a} &=0, & \\
G^{2} & =0, &
G\ldotp K_{X} &=-2, &
G\ldotp F_{a} &=1, \\
F_{a}^{2} &=2a, &
F_{a}\ldotp K_{X}&=-2(a+1).
\end{array}
$$

\subsection{Divisors with no global sections}

To prove Theorem \ref{mainthm}, we first need to understand the following divisors, which have no global sections.

\begin{prop}\label{10pts-NoSect-case1}
For every integer $a\ge0$, 
let $D$ be one of the divisors
\begin{align*}
& (13a+4)H-(5a+3)E_{1}-\sum_{i=2}^{10}(4a+1)E_{i},  \\
& (13a+6)H-(5a+1)E_{1}-\sum_{i=2}^{10}(4a+2)E_{i}.
\end{align*}
Then $H^{0}(X,\CO_{X}(D))=0$.
\end{prop}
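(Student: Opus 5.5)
We will show that the linear systems $\mathcal{L}(13a+4,\,5a+3,\,(4a+1)^{9})$ and $\mathcal{L}(13a+6,\,5a+1,\,(4a+2)^{9})$ are empty for general points, uniformly in $a$. We cannot rely on the SHGH conjecture, because the multiplicities grow with $a$. Instead we first simplify the systems by Cremona transformations (Proposition \ref{Cremnona-transform-consequ}) and then apply Dumnicki's diagram-cutting criterion (Proposition \ref{Dumnicki-method}) to the reduced system.

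\emph{Step 1: virtual dimension.} Riemann--Roch gives the virtual dimension
$$\binom{d+2}{2}-\sum_i\binom{m_i+1}{2}-1.$$
For both systems this should be negative and linear-to-quadratic in $a$; it is consistent with $\chi(D)=\chi(\iota(\cdot))$ of the divisors arising from Theorem \ref{mainthm}. So we expect emptiness, and the task is to make this rigorous.

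\emph{Step 2: Cremona reduction.} For the first system, Cremona at $p_1,p_2,p_3$ has $c=(5a+3)+2(4a+1)-(13a+4)=1$. This gives $\mathcal{L}(13a+3,\,5a+2,\,4a,\,4a,\,(4a+1)^{7})$. Cremona at three of the points of multiplicity $4a+1$ instead has $c=12a+3-13a-4=-a-1<0$, which is not allowed. Hence only a bounded number of Cremona steps are available, and the systems stay "standard" with multiplicities of order $a$. I will perform the admissible Cremona steps (each involving $p_1$) as long as $c>0$ and $m_i\ge c$. By Proposition \ref{Cremnona-transform-consequ} this preserves $h^0$. The aim is to reach a Cremona-reduced system $\mathcal{L}(d',m_1',\dots,m_{10}')$ with $d'\approx 13a$, one multiplicity $\approx 5a$ and nine $\approx 4a$.

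\emph{Step 3: diagram cutting.} Choose an affine partition of $\Delta_{d'}$ into ten pieces $\mathrm{D}_1,\dots,\mathrm{D}_{10}$.
\begin{itemize}
\item The largest multiplicity $m_1'$ gets a corner triangle-like region, cut off by a line of slope adapted to $m_1'$.
\item The nine remaining points get successive strips of $\Delta_{d'}$. These are cut by parallel lines, or by lines of a fixed rational slope such as $x+2y=\mathrm{const}$, whose intercepts are linear in $a$.
\end{itemize}
For each piece I then check the staircase condition of Proposition \ref{Dumnicki-method}: $\mathrm{D}_i$ must be covered by $m_i$ horizontal (or vertical) lines whose $k$-th line meets $\mathrm{D}_i$ in at most $k$ points. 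This means each $\mathrm{D}_i$ should be contained in a translate of the triangle $\{(x,y):x+y\le m_i-1\}$, possibly sheared, with the counts on each line bounded. Since everything is linear in $a$, the verification reduces to finitely many inequalities between affine functions of $a$ and the lattice points on the cutting lines. These inequalities can be checked for all $a\ge0$, with small $a$ (say $a=0,1$) checked by hand if needed. Area considerations ($\tfrac12 d'^2$ versus $\tfrac12\sum m_i'^2$) show that the pieces must fit almost perfectly, which dictates the slopes.

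\emph{Main obstacle.} The hardest step is finding an explicit affine partition of $\Delta_{d'}$ whose pieces simultaneously satisfy the staircase condition for all $a$. The difficulty is that the total area is only barely insufficient: $d'^2$ is close to $\sum m_i'^2$, so boundary lattice points matter. I would first try the pattern used for quasi-homogeneous systems by Dumnicki. Cut off strips for the nine equal multiplicities in pairs along lines of slope $-1/2$ and $-2$. Leave the final region, near the origin, as an approximate triangle for the point of multiplicity $\approx 5a$. I would adjust the intercepts by constants, depending on the two divisor types, to handle the boundary lattice points.
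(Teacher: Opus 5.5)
Your overall tool (Dumnicki's criterion, Proposition \ref{Dumnicki-method}) is the paper's, and your Cremona computation $c=1$ for the first system is correct. However, the proposal is a plan rather than a proof. Everything rests on Step 3, and there you never write down an affine partition; you only say what you ``would first try''. These systems lie exactly in the range where SHGH is open, so a partition satisfying the staircase condition uniformly in $a$ \emph{is} the content of the proposition. Its absence is a genuine gap.

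The surrounding bookkeeping is also off in ways that matter.
\begin{itemize}
\item In Step 1 the virtual dimension is not ``negative and linear-to-quadratic in $a$''. For both systems $\binom{d+2}{2}=\sum_i\binom{m_i+1}{2}$ holds exactly; for instance $(13a+6)(13a+5)=(5a+4)(5a+3)+9(4a+2)(4a+1)$. So $\chi(D)=0$ for every $a$, as it must be, since these divisors compute Hom-spaces between members of a numerically exceptional collection.
\item The staircase condition forces $\#\mathrm{D}_i\le\binom{m_i+1}{2}$. Combined with $\chi(D)=0$, this leaves zero slack: every piece must contain exactly $\binom{m_i+1}{2}$ lattice points, and the $k$-th line must meet it in exactly $k$ points. ``Approximate triangles'' and ad hoc intercept adjustments cannot work.
\item Step 2 buys nothing. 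For the second system, even the three largest multiplicities give $c=(5a+1)+2(4a+2)-(13a+6)=-1$, so no Cremona transformation applies.
\item For the first system (with $a\ge1$) you end at $\mathcal{L}(13a,5a-1,4a+1,(4a)^8)$. This still has $\chi=0$ and is just as hard. The paper in fact goes the other way: it deduces emptiness of this reduced system in Corollary \ref{10pts-NoSect-case1-cor} from the unreduced one.
\end{itemize}

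What is missing is the explicit construction. The paper sets $d=13a+4+2\sigma$, $M=5a+3-2\sigma$, $m=4a+1+\sigma$ and cuts $\Delta_d$ directly, with no Cremona step, by nine lines:
\begin{itemize}
\item $x+y=M-1+\varepsilon$, giving the piece for $p_1$ at the origin;
\item the corner cuts $x=d-m+1-\varepsilon$ and $y=d-m+1-\varepsilon$;
\item the lines $\pm(x-y)=5a+3-\varepsilon$;
\item two lines of slope $3$ and one of slope $1$;
\item the line $x+3y=15a+7-2\sigma-\varepsilon$.
\end{itemize}
It then checks, piece by piece, that each of the ten pieces has exactly the right number of points and is covered by a staircase of vertical or horizontal lines.

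For the last two pieces the row lengths involve floors such as $\lfloor (r+2\sigma)/3\rfloor$. The staircase lines there must be chosen according to $k \bmod 4$ and $\sigma$. So the verification is not merely ``finitely many affine inequalities in $a$'', and boundary lattice points must be controlled exactly.

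Your suggested slopes $-1/2$ and $-2$ are untested, and nothing indicates they produce exact staircase pieces. Until some partition is written down and verified for all $a\ge 0$, the statement remains unproved.
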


\begin{proof}
To prove Proposition \ref{10pts-NoSect-case1}, we use Proposition \ref{Dumnicki-method} to show that 
the corresponding linear systems 
$$
\mathscr{L}_{a}^{0}:=\mathcal{L}(13a+4,5a+3,(4a+1)^{9})
\;
\textrm{ and }
\;
\mathscr{L}_{a}^{1}:=\mathcal{L}(13a+6,5a+1,(4a+2)^{9})
$$
are empty, for every integer $a\ge0$.
We deal with the two systems simultaneously.  
Set $\sigma\in\{0,1\}$, and fix some notation:
$d=13a+4+2\sigma$,
$M=5a+3-2\sigma$,
$m=4a+1+\sigma$,
$q=5a+2$ 
and $T=15a+7-2\sigma$.
Then, we have $\mathscr{L}_{a}^{\sigma}=\mathcal{L}(d,M,m^{9})$.

{\bf Step 1: Construct an affine partition.} 
Following \cite[\S 3.10]{BBC+12},
for a small $0<\varepsilon<1$, we  define $9$ affine functions as follows:
\begin{align*}
f_{1}(x,y)&=-x-y+M-1+\varepsilon,&
f_{2}(x,y)&=x-(d-m)-1+\varepsilon,\\
f_{3}(x,y)&=y-(d-m)-1+\varepsilon,&
f_{4}(x,y)&=x-y-q-1+\varepsilon,\\
f_{5}(x,y)&=-x+y-q-1+\varepsilon,&
f_{6}(x,y)&=3x-y-3q-1+\varepsilon,\\
f_{7}(x,y)&=-3x+y+3a+\varepsilon,&
f_{8}(x,y)&=-x+y-a-1+\varepsilon,\\
f_{9}(x,y)&=x+3y-T+\varepsilon. 
\end{align*}
Since $0<\varepsilon<1$, so no lattice point lies on one of the separating affine lines.
We denote $R_{0}:=\Delta_{d}$ and $R_{i}=R_{i-1}\cap\{(x,y)\mid f_{i}(x,y)<0\}$.    
Then, the diagrams $\mathrm{D}_{1}, \cdots, \mathrm{D}_{10}$ are defined inductively by
\begin{equation}\label{partition-def1}
 \mathrm{D}_{i}:=R_{i-1}\cap \{(x,y)\mid f_{i}(x,y)>0\}, \; 1\le i\leq 9,
\end{equation}
and $\mathrm{D}_{10}:=R_{9}$.  
As a result, we obtain an affine partition:
$$
\Delta_{d}=\mathrm{D}_{1}\cup\cdots\cup \mathrm{D}_{10}.
$$
See for example, Figure \ref{affine-partition} for $a=2$, $\sigma=0$ and $\varepsilon=0.1$, the affine partition of $\Delta_{30}$:

\begin{figure}[H]
\centering
\begin{tikzpicture}[scale=0.18, font=\sffamily]
% x+y=30.9
\draw[line width=0.8pt] (0, 0) -- (30.9, 0) -- (0, 30.9) -- cycle;
% f3: y = 21.9 
\draw[semithick] (0, 21.9) -- (9.0, 21.9);
% f5: y = x + 12.9 
\draw[semithick] (0, 12.9) -- (9.0, 21.9);
% f2: x = 21.9 
\draw[semithick] (21.9, 0) -- (21.9, 9.0);
% f4: y = x - 12.9 
\draw[semithick] (12.9, 0) -- (21.9, 9.0);
% f1: y = -x + 12.1 
\draw[semithick] (0, 12.1) -- (12.1, 0);
% f7: y = 3x - 6.1 
\draw[semithick] (4.55, 7.55) -- (9.25, 21.65);
% f8: y = x + 2.9 
\draw[semithick] (4.5, 7.4) -- (14.0, 16.9);
% f6: y = 3x - 36.9 
\draw[semithick] (12.3, 0) -- (16.95, 13.95);
% f9: y = -x/3 + 12.3 
\draw[semithick] (7.52, 10.46) -- (14.76, 7.38);
% ============================================
\node at (3, 25.5) {\tiny $\mathrm{D}_{3}$};
\node at (4.5, 22.8) {\tiny $f_{3}$};
\node at (3, 18.3) {\tiny $\mathrm{D}_{5}$};
\node at (5.1, 19.4) {\tiny $f_{5}$};
\node at (4, 13) { \tiny $\mathrm{D}_{7}$};
\node at (6.1, 15.7) {\tiny$f_{7}$};
\node at (2.6, 4) { \tiny $\mathrm{D}_{1}$};
\node at (6.5, 4) {\tiny $f_{1}$};
\node at (9.9, 17) {\tiny $\mathrm{D}_{8}$};
\node at (8.5, 13) {\tiny $f_{8}$};
\node at (13, 12) {\tiny $\mathrm{D}_{9}$};
\node at (11.3, 9.8) {\tiny $f_{9}$};
\node at (9.7, 5.5) {\tiny $\mathrm{D}_{10}$};
\node at (17.7, 7.8) {\tiny $\mathrm{D}_{6}$};
\node at (17, 10) {\tiny $f_{6}$};
\node at (19.5, 3) {\tiny $\mathrm{D}_{4}$};
\node at (17, 3) {\tiny $f_{4}$};
\node at (25, 3) {\tiny $\mathrm{D}_{2}$};
\node at (23, 5) {\tiny $f_{2}$};
\end{tikzpicture}
\caption{An affine partition of $\Delta_{30}$}
\label{affine-partition}
\end{figure}

{\bf Step 2: 
Verify the conditions of Proposition \ref{Dumnicki-method}.}
For all the diagrams $\mathrm{D}_{1},\cdots, \mathrm{D}_{10}$, we will discuss case-by-case as follows.

First of all, for the diagrams $\mathrm{D}_{1}$, $\mathrm{D}_{2}$, $\mathrm{D}_{3}$, $\mathrm{D}_{4}$ and $\mathrm{D}_{5}$, it follows directly from \eqref{partition-def1} that we have:
\begin{enumerate}
\item[(1)] For the diagram $\mathrm{D}_{1}$, we have
$
\displaystyle 
\mathrm{D}_{1} =\bigcup_{r=1}^{M} \{(r-1,y)\mid 0\leq y\leq M-r\}.
$
Thus, the cardinality $\# \mathrm{D}_{1}=\frac{M(M+1)}{2}$. 
We take $M$ vertical lines $\ell_{1,k}:=\{x=M-k\}$, 
$1\leq k\leq M$.
Then, the cardinality $\#(\mathrm{D}_{1}\cap \ell_{1,k})=k$, $1\leq k\leq M$.

\item[(2)] For the diagram $\mathrm{D}_{2}$,
we have
$\displaystyle 
\mathrm{D}_{2} =\bigcup_{r=1}^{m}
 \{(d-m+r,y) \mid 0\leq y\leq m-r\}.
$
Thus, the cardinality $\# \mathrm{D}_{2}=\frac{m(m+1)}{2}$.
We take $m$ vertical lines $\ell_{2,k}:=\{x=d-k+1\}$, 
$1\leq k\leq m$.
Then the cardinality $\#(\mathrm{D}_{2}\cap \ell_{2,k})=k$, $1\leq k\leq m$.

\item[(3)] For the diagram $\mathrm{D}_{3}$, 
we have
$\displaystyle 
\mathrm{D}_{3} =\bigcup_{r=1}^{m}
 \{(x,d-m+r)\mid 0\leq x\leq m-r\}.
$
Thus, the cardinality $\# \mathrm{D}_{3}=\frac{m(m+1)}{2}$.
We take $m$ horizontal lines $\ell_{3,k}:=\{y=d-k+1\}$, 
$1\leq k\leq m$.
Then, the cardinality $\#(\mathrm{D}_{3}\cap \ell_{3,k})=k$, $1\leq k\leq m$.

\item[(4)] For the diagram $\mathrm{D}_{4}$,
we have 
$\displaystyle 
\mathrm{D}_{4}=\bigcup_{r=1}^{m}
 \{(q+r,y)\mid 0\leq y\leq r-1\}.
$
Thus, the cardinality $\# \mathrm{D}_{4}=\frac{m(m+1)}{2}$.
We take $m$ vertical lines $\ell_{4,k}:=\{x=q+k\}$, 
$1\leq k\leq m$.
Then, the cardinality $\#(\mathrm{D}_{4}\cap \ell_{4,k})=k$, $1\leq k\leq m$.

\item[(5)] For the diagram $\mathrm{D}_{5}$, 
we have 
$\displaystyle 
\mathrm{D}_{5} =\bigcup_{r=1}^{m}
 \{(x,q+r) \mid 0\leq x\leq r-1\}.
$
Thus, the cardinality $\# \mathrm{D}_{5}=\frac{m(m+1)}{2}$.
We take $m$ horizontal lines $\ell_{5,k}:=\{y=q+k\}$, 
$1\leq k\leq m$.
Then, the cardinality $\#(\mathrm{D}_{5}\cap \ell_{5,k})=k$, $1\leq k\leq m$.
\end{enumerate}

Moreover, for the diagrams $\mathrm{D}_{6}$, $\mathrm{D}_{7}$ and $\mathrm{D}_{8}$, the discussions are the same:
\begin{enumerate}
\item[(i)]For the diagram $\mathrm{D}_{6}$, the conditions of not belonging to $\mathrm{D}_{4}$, belonging to $\mathrm{D}_{6}$, and lying in $\Delta_{d}$ yield the condition:
$
x-q\leq y\leq \min\{3x-3q-1,d-x\}.
$
Thus, we have 
$$ \displaystyle 
\mathrm{D}_{6}=\bigcup_{r=1}^{m}
 \{(q+r,y) \mid r\leq y\leq\min\{3r-1,2m-r\}\}.
$$
We take $m$ vertical lines as follows: (i) if $k$ is even, then $\ell_{6,k}:=\{x=q+\frac{k}{2}\}$; (ii) if $k$ is odd, then $\ell_{6,k}:=\{x=q+m-\frac{k-1}{2}\}$.
Thus, the cardinality $\#(\mathrm{D}_{6}\cap \ell_{6,k})=k$, $1\leq k\leq m$, and then $\# \mathrm{D}_{6}=\frac{m(m+1)}{2}$.

\item[(ii)] For the diagram $\mathrm{D}_{7}$, after simplifying the preceding inequalities one obtains the constraint conditions:
$ 1-\sigma\leq x\leq m-\sigma$ and $\max\{M-x,3x-3a\}\leq y\leq q+x$.
Thus, we get
$$\displaystyle 
\mathrm{D}_{7}=\bigcup_{r=1}^{m}
  \{(r-\sigma,y) \mid \max\{M-r+\sigma,3r-3\sigma-3a\}\le y \leq q+r-\sigma
 \}.
$$
We take $m$ vertical lines as follows: (i) if $k$ is even, then $\ell_{7,k}=\{x=\frac{k}{2}-\sigma\}$; (ii) if $k$ is odd, then $\ell_{7,k}=\{x=m-\frac{k-1}{2}-\sigma\}$.
Thus, the cardinality $\#(\mathrm{D}_{7}\cap \ell_{7,k})=k$, $1\leq k\leq m$, and then $\# \mathrm{D}_{7}=\frac{m(m+1)}{2}$.

\item[(iii)] For the diagram $\mathrm{D}_8$, the constraint conditions are
$2a+1\leq x \leq 2a+m$ and $x+a+1\leq y \leq \min\{3x-3a-1,d-x\}$.
Hence, we have
$$\displaystyle 
\mathrm{D}_{8}=\bigcup_{r=1}^{m}
 \left\{(2a+r,y) \mid
 3a+r+1\leq y\leq
 \min\{3a+3r-1,11a+4+2\sigma-r\}
 \right\}.
$$
We take $m$ vertical lines as follows: (i) if $k$ is even, then $\ell_{8,k}=\{x=2a+m+1-\frac{k}{2}\}$; (ii) if $k$ is odd, then $\ell_{8,k}=\{x=2a+\frac{k+1}{2}\}$.
Thus, the cardinality $\#(\mathrm{D}_{8}\cap \ell_{8,k})=k$, $1\leq k\leq m$, and $\# \mathrm{D}_{8}=\frac{m(m+1)}{2}$.
\end{enumerate}

Finally, for the last two diagrams, the discussions are also the same:
\begin{enumerate}
\item[(1)]For the diagram $\mathrm{D}_{9}$,
set $x=3a+1+r$. The new inequality $x+3y\geq T$, together
with the complements of the $\mathrm{D}_{6}$- and $\mathrm{D}_{8}$-inequalities and the boundary
of $\Delta_{d}$, gives the condition: 
$l_{9}(r) \leq y \leq u_{9}(r)$,
where 
$l_{9}(r):=\max \{
 4a+2-\left\lfloor\frac{r+2\sigma}{3}\right\rfloor,
 3r-6a-3 \}$ and 
 $u_{9}(r):=\min\{4a+1+r,\,10a+3+2\sigma-r\}$.
Thus, we have
$$\displaystyle 
\mathrm{D}_{9}=\bigcup_{r=1}^{m}
 \{(3a+1+r,y)\mid l_{9}(r)\leq y\leq u_{9}(r)\}.
$$
We take $m$ vertical lines as follows:
\begin{enumerate}
\item[(i)] If $\sigma=0$, then there are four cases:
(1) if $k\equiv 0\; ({\rm mod}\; 4)$, then $\ell_{9,k}=\{x=3a+1+\frac{3}{4}k\}$;
(2) if $k\equiv 1\; ({\rm mod}\; 4)$, then $\ell_{9,k}=\{x=3a+1+\frac{3k+1}{4}\}$;
(3) if $k\equiv 2\;({\rm mod}\; 4)$, then $\ell_{9,k}=\{x=3a+1+\frac{3k+2}{4}\}$;
(4) if $k\equiv 3\; ({\rm mod}\; 4)$, then $\ell_{9,k}=\{x=3a+1+m-\frac{k-3}{4}\}$.
Thus, the cardinality$\#(\mathrm{D}_{9}\cap\ell_{9,k})=k$, $1\leq k\leq m$ and $\#\mathrm{D}_{9}=\frac{m(m+1)}{2}$.
\item[(ii)] If $\sigma=1$, then there exist four cases:
(1) if $k\equiv 0\; ({\rm mod}\; 4)$, then $\ell_{9,k}=\{x=3a+1+\frac{3}{4}k\}$;
(2) if $k\equiv 1\; ({\rm mod}\; 4)$, then $\ell_{9,k}=\{x=3a+1+m-\frac{k-1}{4}\}$;
(3) if $k\equiv 2\; ({\rm mod}\; 4)$, then $\ell_{9,k}=\{x=3a+1+\frac{3k-2}{4}\}$;
(4) if $k\equiv 3\; ({\rm mod}\; 4)$, then $\ell_{9,k}=\{x=3a+1+\frac{3k-1}{4}\}$.
Thus, the cardinality$\#(\mathrm{D}_{9}\cap\ell_{9,k})=k$, $1\leq k\leq m$ and $\#\mathrm{D}_{9}=\frac{m(m+1)}{2}$.
\end{enumerate}

\item[(2)]For the diagram $\mathrm{D}_{10}$, on the horizontal line
$y=r-\sigma$, the inequalities defining the residual piece $\mathrm{D}_{10}$
reduce to $ l_{10}(r) \leq x \leq u_{10}(r)$,
where 
$l_{10}(r):=\max\{q+1-\sigma-r,\,r-\sigma-a\}$ 
and
$u_{10}(r):=\min\left\{
 q+\left\lfloor\frac{r-\sigma}{3}\right\rfloor,
 15a+6+\sigma-3r \right\}$.
Thus, we have
$$ \displaystyle 
\mathrm{D}_{10}
=\bigcup_{r=1}^{m}
 \{(x,r-\sigma)\mid l_{10}(r)\leq x\leq u_{10}(r)\}.
$$
We take $m$ horizontal lines as follows:
\begin{enumerate}
\item[(i)] If $\sigma=0$, then there are four cases:
(1) if $k\equiv 0\; ({\rm mod}\; 4)$, then $\ell_{10,k}=\{y=\frac{3}{4}k\}$;
(2) if $k\equiv 1\; ({\rm mod}\; 4)$, then $\ell_{10,k}=\{y=\frac{3k+1}{4}\}$;
(3) if $k\equiv 2\;({\rm mod}\; 4)$, then $\ell_{10,k}=\{y=\frac{3k+2}{4}\}$;
(4) if $k\equiv 3\; ({\rm mod}\; 4)$, then $\ell_{10,k}=\{y=m-\frac{k-3}{4}\}$.
Thus, the cardinality$\#(\mathrm{D}_{10}\cap\ell_{10,k})=k$, $1\leq k\leq m$ and $\#\mathrm{D}_{10}=\frac{m(m+1)}{2}$.
\item[(ii)] If $\sigma=1$, then there exist four cases:
(1) if $k\equiv 0\; ({\rm mod}\; 4)$, then $\ell_{10,k}=\{y=\frac{3}{4}k-1\}$;
(2) if $k\equiv 1\; ({\rm mod}\; 4)$, then $\ell_{10,k}=\{y=m-\frac{k-1}{4}-1\}$;
(3) if $k\equiv 2\; ({\rm mod}\; 4)$, then $\ell_{10,k}=\{y=\frac{3k-2}{4}-1\}$;
(4) if $k\equiv 3\; ({\rm mod}\; 4)$, then $\ell_{10,k}=\{y=\frac{3k-1}{4}-1\}$.
Thus, the cardinality$\#(\mathrm{D}_{10}\cap\ell_{10,k})=k$, $1\leq k\leq m$ and $\#\mathrm{D}_{10}=\frac{m(m+1)}{2}$.
\end{enumerate}
\end{enumerate}
Consequently, based on the above discussions, Proposition \ref{Dumnicki-method} immediately yields $H^{0}(X,\CO_{X}(D))=0$.  
\end{proof}

\begin{cor}\label{10pts-NoSect-case1-cor}
Let $D$ be one of the divisors
\begin{align*} 
& (13a+7)H-(5a+2)E_{1}-\sum_{i=2}^{3}(4a+3)E_{i}-\sum_{j=4}^{10} (4a+2) E_{j},\; a\geq 0,\\
& 13aH-(5a-1)E_{1}-(4a+1)E_{2}-\sum_{i=3}^{10} 4a E_{i},\; a\geq 1.
\end{align*}
Then $H^{0}(X,\CO_{X}(D))=0$.
\end{cor}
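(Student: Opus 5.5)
The plan is to reduce both divisors to the two divisors of Proposition \ref{10pts-NoSect-case1}, using the Cremona transformations of Proposition \ref{Cremnona-transform-consequ} and, where that is not enough, the monotonicity of $h^{0}$ under adding effective divisors. If neither works, I would run the diagram-cutting argument of Proposition \ref{Dumnicki-method} directly.

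\emph{First divisor.} Write $D=(13a+7)H-(5a+2)E_{1}-\sum_{i=2}^{3}(4a+3)E_{i}-\sum_{j=4}^{10}(4a+2)E_{j}$ and apply a Cremona transformation at $p_{1},p_{2},p_{3}$. Then
$$
c=(5a+2)+2(4a+3)-(13a+7)=1>0,
$$
and $m_{1},m_{2},m_{3}\geq 1=c$ for every $a\geq 0$, so the hypotheses of the definition hold. The transformed divisor is
$$
D^{\prime}=(13a+6)H-(5a+1)E_{1}-\sum_{i=2}^{10}(4a+2)E_{i},
$$
which is exactly the second divisor of Proposition \ref{10pts-NoSect-case1} (the case $\sigma=1$). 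Proposition \ref{Cremnona-transform-consequ} then gives $h^{0}(D)=h^{0}(D^{\prime})=0$.

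\emph{Second divisor.} Write $D=13aH-(5a-1)E_{1}-(4a+1)E_{2}-\sum_{i=3}^{10}4aE_{i}$ with $a\geq 1$. A Cremona transformation at $p_{1},p_{2},p_{3}$ has $c=0$, and one at $p_{2},p_{3},p_{4}$ has $c=1-a\leq 0$, so neither reduces the degree. I would therefore use two complementary approaches.
\begin{enumerate}
\item[(1)] Search for an effective divisor $N$ such that $D+N$ is a Cremona image of one of the divisors of Proposition \ref{10pts-NoSect-case1} (for parameter $a$ or $a-1$), or of the first divisor above. Then $h^{0}(D)\leq h^{0}(D+N)=0$. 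Here the Cremona action may be applied to $D+N$ at any triple of points, since the points are general.
\item[(2)] Failing that, apply Proposition \ref{Dumnicki-method} directly to $\mathcal{L}(13a,5a-1,4a+1,(4a)^{8})$. I would reuse the nine affine functions of Proposition \ref{10pts-NoSect-case1} with parameters shifted: $d=13a$, $M=5a-1$, one corner piece of size $4a+1$, and the remaining pieces of size $m=4a$. I would assign the special multiplicity $4a+1$ to one of the corner triangles $\mathrm{D}_{2}$ or $\mathrm{D}_{3}$, whose staircase shape adapts trivially to any size.
\end{enumerate}

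For route (2), a quick consistency check is available. Since $\chi(D)=\binom{13a+2}{2}-\binom{5a}{2}-\binom{4a+2}{2}-8\binom{4a+1}{2}$, the total number of lattice points in the parts must equal $\#\Delta_{13a}$. Once the parts are set up, verifying the staircase line counts $\#(\mathrm{D}_{i}\cap\ell_{i,k})\leq k$ is the same mechanical bookkeeping as in the proof above.

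The main obstacle is the second divisor. Finding an exact Cremona/effective-sum reduction is not automatic, so most likely the hard work is retuning the constants in $f_{6},\dots,f_{9}$ so that the interior pieces $\mathrm{D}_{6},\dots,\mathrm{D}_{10}$ still have exactly $\frac{m(m+1)}{2}$ points arranged in staircases. I would first try the affine functions of the proof above with $a$ replaced by $a-\tfrac{1}{3}$-type shifts in the constants, check small cases $a=1,2$ by hand, and then extract the general pattern.
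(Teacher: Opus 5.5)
Your treatment of the first divisor is correct and is exactly the paper's argument: one Cremona transformation at $p_1,p_2,p_3$ with $c=1$ lands on $\mathscr{L}_a^1=\mathcal{L}(13a+6,5a+1,(4a+2)^9)$.

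For the second divisor the proposal does not give a proof. Route (1) is left as an unfinished search, and route (2) is an unexecuted plan to re-engineer nine affine cuts. So the vanishing of $h^0\bigl(13aH-(5a-1)E_1-(4a+1)E_2-\sum_{i\geq 3}4aE_i\bigr)$ is never established.

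The missing observation is that Cremona moves may also be used in the degree-increasing direction. You only tested the triples $p_1,p_2,p_3$ (where $c=0$) and $p_2,p_3,p_4$ (where $c=1-a$). The triple $p_1,p_3,p_4$ gives $c=(5a-1)+4a+4a-13a=-1$. Reflecting there produces $D''=(13a+1)H-5aE_1-(4a+1)(E_2+E_3+E_4)-\sum_{i\geq 5}4aE_i$. Your $D$ is the Cremona transform of $D''$ at $p_1,p_3,p_4$ in the paper's sense: $c=5a+2(4a+1)-(13a+1)=1$, and all three multiplicities are $\geq 1$ once $a\geq 1$. Hence $h^0(D)=h^0(D'')$ by Proposition \ref{Cremnona-transform-consequ}.

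Repeating this four times, pairing $p_1$ with the eight points of multiplicity $4a$, gives the chain
\[
\mathcal{L}(13a,5a-1,4a+1,(4a)^8)\leftrightarrow\mathcal{L}(13a+1,5a,(4a+1)^3,(4a)^6)\leftrightarrow\cdots\leftrightarrow\mathcal{L}(13a+4,5a+3,(4a+1)^9)=\mathscr{L}_a^0 .
\]
Each step is a genuine Cremona transformation with $c=1$ when read from right to left. Therefore $h^0(D)=h^0(\mathscr{L}_a^0)=0$ by Proposition \ref{10pts-NoSect-case1}.

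This is precisely the paper's proof. It starts from $\mathscr{L}_a^0$ and applies four Cremona transformations, each at $p_1$ and two unused points of multiplicity $4a+1$. The last step needs $m_1=5a\geq c=1$, which is why the statement requires $a\geq 1$. In your language, it is route (1) with $N=0$, and no new diagram-cutting is needed.
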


\begin{proof}
For one thing, for each integer $a\geq 0$, we denote by 
$$
\mathscr{J}_{a}:=\mathcal{L}(13a+7,5a+2,(4a+3)^2,(4a+2)^{7})
$$ 
the corresponding linear system.
Then, applying the Cremona transformation of $\mathscr{J}_{a}$ at points $p_{1}$, $p_{2}$ and $p_{3}$, we obtain its Cremona transformation
$\mathscr{L}_{a}^{1}$. 

For another, we use $\mathscr{S}_{a}$ to denote the linear system $\mathcal{L}(13a,5a-1,4a+1,(4a)^{8})$, $a\geq 1$.
We begin with the linear system 
$$
\mathscr{L}_{a}^{0}=\mathcal{L}(13a+4,5a+3,(4a+1)^{9}).
$$
Then, applying the Cremona transformation of $\mathscr{L}_{a}^{0}$ at points $p_{1}$, $p_{2}$ and $p_{3}$, we obtain its Cremona transformation
$$
\mathcal{L}(13a+3,5a+2,(4a)^{2},(4a+1)^{7}).
$$
Next, applying three times Cremona transformations of the above linear system, at every stage, use the point $p_{1}$ and two as-yet unused points of multiplicity $4a+1$, up to permutations, we get the linear system $\mathscr{S}_{a}$.

In summary, according to Proposition \ref{Cremnona-transform-consequ} and Proposition \ref{10pts-NoSect-case1},  we obtain $H^{0}(X,\CO_{X}(D))=0$.
\end{proof}

%----------------------------------------------------------------------
\subsection{Proof of Theorem \ref{mainthm}}

The proof consists of the following two propositions.

\begin{prop}\label{EC-10pts-main-prop}
For every integer $a\geq 0$, the sequence
\begin{equation}\label{EC-sequ-10pts-main}
\{\CO_{X},\CO_{X}(A),\CO_{X}(B_{3}),\cdots,\CO_{X}(B_{10}),\CO_{X}(G),\CO_{X}(F_{a}),\CO_{X}(F_{a+1})\} 
\end{equation}
is an exceptional collection of line bundles.
\end{prop}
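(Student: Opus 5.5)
For each pair $i<j$ in \eqref{EC-sequ-10pts-main}, write $D_{i}$ for the divisor of the $i$-th line bundle. Exceptionality of each single line bundle is automatic: $\RHom(\CO_{X}(D),\CO_{X}(D))=H^{\bullet}(\CO_{X})=\CN[0]$ because $X$ is rational. So the whole task is to show
$$
H^{\bullet}(X,\CO_{X}(D_{i}-D_{j}))=0 \quad\text{for all } i<j .
$$
Since \eqref{NEC-10pts-case} is obtained by applying $\iota$ to the full exceptional collection $\mathbb{D}_{a}$, and $\iota$ is linear with $\chi(\iota(D))=\chi(D)$, we have $\chi(D_{i}-D_{j})=0$ for all $i<j$. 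It therefore suffices to prove two vanishings for each difference $D=D_{i}-D_{j}$:
$$
h^{0}(D)=0 \qquad\text{and}\qquad h^{2}(D)=h^{0}(K_{X}-D)=0 ,
$$
the second by Serre duality. Then $h^{1}(D)=-\chi(D)=0$ follows.

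\textbf{Step 1: list the divisors.} I would first write every $D_{i}-D_{j}$ and $K_{X}-(D_{i}-D_{j})$ in the form $dH-\sum m_{k}E_{k}$, using $K_{X}=-3H+\sum E_{k}$. Several relations organise the bookkeeping.
\begin{itemize}
\item $F_{a+1}-F_{a}=4K_{X}-H+E_{1}=G$.
\item The differences $B_{j}-B_{k}=E_{k}-E_{j}$ and $A-B_{j}$ involve only classes of small degree.
\item The differences with $\CO_{X}$ are $-A,-B_{j},-G,-F_{a},-F_{a+1}$, together with their Serre partners $K_{X}+A$, and so on.
\end{itemize}

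\textbf{Step 2: easy cases.} A divisor with negative degree $d<0$ (when $m_{k}\ge 0$), or with $d\cdot H<0$, or with negative intersection against a nef class, has no sections. This should dispose of $\pm(E_{k}-E_{j})$ and of most Serre partners $K_{X}-D$, whose $H$-coefficient turns out negative. For the remaining small-multiplicity systems, such as $\mathcal{L}(6,2^{10})$-type systems coming from $-A$ and $-B_{j}$, I would either cite the known cases of the SHGH conjecture \cite{CM11,DJ07} or give a short Cremona reduction (Proposition \ref{Cremnona-transform-consequ}) down to an evidently empty system.

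\textbf{Step 3: the $a$-dependent cases.} These are the differences involving $F_{a}$ and $F_{a+1}$, for example $-F_{a}$, $G-F_{a}$, $A-F_{a}$, $B_{j}-F_{a}$, $F_{a}-F_{a+1}=-G$ and their Serre partners. Take $-F_{a}$ as an illustration: its class is $(13a+13)H-(5a+4)E_{1}-(4a+5)E_{2}-\sum_{k\ge3}(4a+4)E_{k}$. Here I would apply suitable Cremona transformations centred at $p_{1}$ and two further points, and, where necessary, subtract one of $H$, $E_{k}$ or $L$ in a way that can only decrease $h^{0}$ (restrict to a curve on which the degree is negative). The goal is to land exactly on one of the four families
$$
\mathcal{L}(13a+4,5a+3,(4a+1)^{9}),\quad \mathcal{L}(13a+6,5a+1,(4a+2)^{9}),
$$
$$
\mathcal{L}(13a+7,5a+2,(4a+3)^{2},(4a+2)^{7}),\quad \mathcal{L}(13a,5a-1,4a+1,(4a)^{8}),
$$
which are empty by Proposition \ref{10pts-NoSect-case1} and Corollary \ref{10pts-NoSect-case1-cor}. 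Index shifts $a\mapsto a\pm1$ are allowed, and the finitely many small values of $a$ not covered should be checked directly.

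\textbf{Main obstacle.} I expect Step 3 to be the hardest, in particular matching each large-multiplicity difference, with the right choice of Cremona points and peeled-off curves, to precisely one of the four empty families. The first thing I would try is to compute $D\cdot E_{k}$ and the degree after each Cremona step to see which normal form is reached. Where a difference overshoots a family by a class such as $E_{k}$ or $L$, I would use the standard restriction sequence
$$
0\to\CO_{X}(D-C)\to\CO_{X}(D)\to\CO_{C}(D)\to 0 .
$$
When $D\cdot C<0$ this gives $h^{0}(D)=h^{0}(D-C)$ for the $(-1)$-curve $C$, allowing the reduction to the listed systems.
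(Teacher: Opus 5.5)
Your proposal is correct and is essentially the paper's proof: $\chi=0$ via $\iota$, every Serre partner $K_X-D_i+D_j$ has negative $H$-degree, $A-B_j$ and $E_j-E_i$ are trivial, and all remaining $\Hom$-vanishings come from Proposition~\ref{10pts-NoSect-case1} and Corollary~\ref{10pts-NoSect-case1-cor}. The step you flag as the main obstacle is in fact immediate: up to permuting the general points, each remaining divisor is literally one of your four families, so no further Cremona moves, curve peeling or SHGH input are needed. Concretely, $-F_a=G-F_{a+1}$ is the fourth family at $a+1$; $A-F_a$ and $B_j-F_a$ are the second and third at $a$; $-B_j$ and $A-G$ are the second at $a=0$; $-A$ and $B_j-G$ are the third at $a=0$; $-G$ is the fourth at $a=1$; and $G-F_0=E_1-E_2$.
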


\begin{proof}
Since the sequence \eqref{EC-sequ-10pts-main} is numerically exceptional, by the definition of Euler characteristic,
it is sufficient to show the following dimensions of $\Hom$-spaces and $\Ext^{2}$-spaces are zero: 
\begin{align*}
\hom(\CO_{X}(A),\CO_{X}) =\; &  h^{0}(-A),   \\
\ext^{2}(\CO_{X}(A),\CO_{X}) =\; & h^{2}(-A)= h^{0}(K_{X}+A), \\
\hom(\CO_{X}(B_{j}),\CO_{X}) =\; & h^{0}(-B_{j}), \\ 
\ext^{2}(\CO_{X}(B_{j}),\CO_{X}) =\; & h^{2}(-B_{j})= h^{0}(K_{X}+B_{j}) \\
\hom(\CO_{X}(G),\CO_{X}) =\; &h^{0}(-G), \\
\ext^{2}(\CO_{X}(G),\CO_{X}) =\; & h^{2}(-G)=h^{0}(K_{X}+G), \\
\hom(\CO_{X}(B_{j}),\CO_{X}(A)) =\; & h^{0}(A-B_{j}), \\
\ext^{2}(\CO_{X}(B_{j}),\CO_{X}(A)) =\; & h^{2}(A-B_{j})=h^{0}(K_{X}-A+B_{j}), \\
\hom(\CO_{X}(G),\CO_{X}(A)) =\; & h^{0}(A-G), \\
\ext^{2}(\CO_{X}(G),\CO_{X}(A)) =\; & h^{2}(A-G)=h^{0}(K_{X}-A+G), \\
\hom(\CO_{X}(B_{j}),\CO_{X}(B_{i})) =\; & h^{0}(B_{i}-B_{j}), \\
\ext^{2}(\CO_{X}(B_{j}),\CO_{X}(B_{i})) =\; & h^{2}(B_{i}-B_{j})=h^{0}(K_{X}-B_{i}+B_{j}), \\
\hom(\CO_{X}(G),\CO_{X}(B_{j})) =\; & h^{0}(B_{j}-G), \\
\ext^{2}(\CO_{X}(G),\CO_{X}(B_{j}))
=\; & h^{2}(B_{j}-G)=h^{0}(K_{X}-B_{j}+G), \\
\hom(\CO_{X}(F_{a}),\CO_{X})   =\; & h^{0}(-F_{a}), \\
\ext^{2}(\CO_{X}(F_{a}),\CO_{X}) =\; & h^{2}(-F_{a})=h^{0}(K_{X}+F_{a}), \\
\hom(\CO_{X}(F_{a}),\CO_{X}(A)) =\; & h^{0}(A-F_{a}), \\
\ext^{2}(\CO_{X}(F_{a}),\CO_{X}(A)) =\; & h^{2}(A-F_{a})=h^{0}(K_{X}-A+F_{a}), \\
\hom(\CO_{X}(F_{a}),\CO_{X}(B_{j})) =\; & h^{0}(B_{j}-F_{a}), \\
\ext^{2}(\CO_{X}(F_{a}),\CO_{X}(B_{j}))=\; & h^{2}(B_{j}-F_{a})=h^{0}(K_{X}-B_{j}+F_{a}), \\
\hom(\CO_{X}(F_{a}),\CO_{X}(G))  =\; & h^{0}(G-F_{a}), \\
\ext^{2}(\CO_{X}(F_{a}),\CO_{X}(G)) =\; & h^{2}(G-F_{a})=h^{0}(K_{X}-G+F_{a}), \\
\hom(\CO_{X}(F_{a+1}),\CO_{X}(F_{a}))=\; &h^{0}(F_{a}-F_{a+1}), \\
\ext^{2}(\CO_{X}(F_{a+1}),\CO_{X}(F_{a}))=\;& h^{2}(F_{a}-F_{a+1})=h^{0}(K_{X}-F_{a}+F_{a+1}),
\end{align*}
where $3\leq i, j\leq 10$ and $i<j$.

For the $\Ext^{2}$-spaces, 
since the divisors $K_{X}+A$, $K_{X}+B_{j}$, $K_{X}+G$, $K_{X}-A+B_{j}$, $K_{X}-A+G$, $K_{X}-B_{i}+B_{j}$, $K_{X}-B_{j}+G$, $K_{X}+F_{a}$, $K_{X}-A+F_{a}$, $K_{X}-B_{j}+F_{a}$, $K_{X}-G+F_{a}$ and $K_{X}-F_{a}+F_{a+1}$ intersect negatively with the nef divisor $H$, so 
all the above $\Ext^{2}$-spaces are trivial.

For the $\Hom$-spaces, 
since $(A-B_{j}).H=-1$ and $B_{i}-B_{j}=E_{j}-E_{i}$, so $\hom(B_{j},A)=0$ for $3\leq j\leq 10$ and $\hom(B_{j},B_{i})=0$ for $3\leq i<j\leq 10$. 
The remaining cases correspond to the divisors:
\begin{align*}
-A&=7H-3E_{1}-3E_{2}-\sum_{i=3}^{10}2E_{i}, \qquad \quad  \quad \quad
-B_{j} = 6H-E_{j}-\sum_{i\neq j}2 E_{i},\\
-G&=13H-5E_{1}-\sum_{i=2}^{10} 4E_{i},
\qquad \quad \quad \quad \quad \;\;
A-G =6H-E_{2}-\sum_{i\neq 2}2 E_{i}, \\
B_{j}-G&= 7H-3E_{1}-3E_{j}-\sum_{i\neq 1, j}^{10}2E_{i}, 
\quad \quad \,
F_{a}-F_{a+1} =-G,\\
-F_{a}&=(13a+13)H-(5a+4)E_{1}-(4a+5)E_{2}-\sum_{i=3}^{10} (4a+4)E_{i}, \\
A-F_{a} &= (13a+6)H-(5a+1)E_{1}-\sum_{i=2}^{10} (4a+2)E_{i},\\
B_{j}-F_{a}&=(13a+7)H-(5a+2)E_{1}-(4a+3)(E_{2}+E_{j})-\sum_{i\neq 1,2,j} (4a+2)E_{i}, \\
G-F_{a}&=13aH-(5a-1)E_{1}-(4a+1)E_{2}-\sum_{i=3}^{10} 4aE_{i},
\end{align*}
where $a\geq 0$ and $3\leq j\leq 10$.
By Proposition \ref{10pts-NoSect-case1}, we have $h^{0}(A-F_{a})=0$.
Up to permutations, Proposition \ref{10pts-NoSect-case1} yields that $h^{0}(-B_{j})=0$ and $h^{0}(A-G)=0$.
Moreover, by Corollary \ref{10pts-NoSect-case1-cor}, we get $h^{0}(-F_{a})=0$ and
$h^{0}(G-F_{a})=0$.
Finally, up to permutations, by Corollary \ref{10pts-NoSect-case1-cor}, we have $h^{0}(-A)=0$, $h^{0}(B_{j}-G)=0$, $h^{0}(-G)=h^{0}(F_{a+1}-F_{a})=0$ and $h^{0}(B_{j}-F_{a})=0$.
Consequently, the sequence \eqref{EC-sequ-10pts-main} is an exceptional collection of line bundles of maximal length.
\end{proof}

To finish the proof of Theorem \ref{mainthm}, by Proposition \ref{prop:univ_phantom}, it is sufficient to prove that the exceptional collection \eqref{EC-sequ-10pts-main} is not full.
For obtaining the non-fullness, we will show that the height of the exceptional collection \eqref{EC-sequ-10pts-main} is positive. 
Given an integer $a\geq 0$, for convenience, we set 
$$
L_{1}:=\CO_{X}, \; L_{2}:=\CO_{X}(A), \; L_{l}:=\CO_{X}(B_{j}),\;
L_{11}:=\CO_{X}(G),
$$
$$
L_{12}:=\CO_{X}(F_{a}),
L_{13}:=\CO_{X}(F_{a+1}),
$$
where $3\leq j \leq 10$.
Then, all the forward $\Hom$-spaces are trivial:

\begin{lem}\label{10pts-forward-hom-zero-main}
$\Hom(L_{i},L_{j})=0$ for $1\leq i<j\leq 13$. 
\end{lem}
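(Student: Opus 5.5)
Since $\Hom(L_{i},L_{j})=H^{0}(X,\CO_{X}(D_{j}-D_{i}))$, where $D_{i}$ denotes the divisor of $L_{i}$, the plan is to list the differences $D_{j}-D_{i}$ for $i<j$ and show that each is not effective. It suffices to exhibit, for each difference, a nef class meeting it negatively. The tool will be the anticanonical-type class $-K_{X}$ restricted to $H$, or more conveniently $H$ itself together with the nef classes $H-E_{k}$. The key observation is that every $D_{i}$ with $i\geq 2$ is of the form $c_{i}K_{X}+(\text{small correction})$ with $c_{i}\in\{2,4,4(a+1),4(a+2)\}$ weakly increasing in $i$. Since $K_{X}\cdot H=-3$, forward differences tend to have very negative degree in $H$.

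Concretely, I will write each $D_{j}$ in the basis $H,E_{1},\dots,E_{10}$: $A=-7H+3E_{1}+3E_{2}+2\sum_{i\ge3}E_{i}$, $B_{j}=-6H+E_{j}+2\sum_{i\neq j}E_{i}$, $G=-13H+5E_{1}+4\sum_{i\ge2}E_{i}$, and $F_{a}=-(13a+13)H+(5a+4)E_{1}+(4a+5)E_{2}+(4a+4)\sum_{i\ge3}E_{i}$. I will then compute $H\cdot(D_{j}-D_{i})$ in each case:
\begin{itemize}
\item $\deg(A)=-7$ and $\deg(B_{j})=-6$.
\item $\deg(B_{j}-A)=1$ and $\deg(B_{i}-B_{j})=0$.
\item $\deg(G-A)=-6$ and $\deg(G-B_{j})=-7$.
\item $\deg(F_{a}-\ast)<0$ for $\ast\in\{\CO_{X},A,B_{j},G\}$, since $13a+13>13$.
\item $\deg(F_{a+1}-F_{a})=-13$.
\end{itemize}
Whenever the degree is negative, $H$ is nef, so $h^{0}=0$ immediately.

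The remaining cases are those with nonnegative degree. The first is $B_{i}-B_{j}=E_{j}-E_{i}$ for $i\ne j$, which is not effective because $E_{i}$ is an irreducible curve with $(E_{j}-E_{i})\cdot E_{i}=1$. More directly, $(H-E_{j})$ is nef and $(H-E_{j})\cdot(E_{j}-E_{i})=-1$. The second is $B_{j}-A=H-E_{1}-E_{2}-E_{j}$, the class of lines through three general points, and $h^{0}(\mathcal{L}(1,1^{3}))=0$ since three general points are not collinear. Note that $B_{i}-B_{j}$ and $B_{j}-A$ with $j>i$ account for all pairs among indices $2,\dots,10$. The last is the pair $(L_{1},\ast)$, where all targets have negative degree.

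The main (mild) obstacle is just bookkeeping, making sure every pair $i<j$ is covered. In particular, the differences $F_{a}-B_{j}$ and $F_{a}-G$ must genuinely have negative $H$-degree for all $a\geq0$. I will check these carefully: they equal $-(13a+7)$ and $-13a$ respectively, up to sign conventions. If $a=0$ gives degree $0$ in $F_{0}-G$, I will instead intersect with a suitable nef class such as $H-E_{1}$ to conclude non-effectivity.
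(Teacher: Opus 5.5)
Your strategy is the same as the paper's: write $\Hom(L_i,L_j)=H^0(D_j-D_i)$ and use nefness of $H$ to kill every difference of negative $H$-degree. The three leftover classes, $B_j-A=H-E_1-E_2-E_j$, $B_j-B_i=E_i-E_j$, and (only when $a=0$) $F_0-G=E_2-E_1$, are treated by hand. Your degree bookkeeping is right. The cases $B_j-A$ and $F_0-G$ are handled correctly; in particular $H-E_1$ is nef and $(H-E_1)\cdot(E_2-E_1)=-1$.

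There is one step that fails as written: the vanishing $h^0(E_j-E_i)=0$ for $i\neq j$. Neither of the two justifications you give works.
\begin{itemize}
\item The fact that $(E_j-E_i)\cdot E_i=1>0$ is no obstruction at all. A positive intersection number with an irreducible curve is perfectly compatible with effectivity.
\item The nef-class computation has a sign error: $(H-E_j)\cdot(E_j-E_i)=H\cdot E_j-H\cdot E_i-E_j^2+E_i\cdot E_j=+1$, not $-1$.
\end{itemize}

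The fix is to use the nef class attached to the point carrying the negative coefficient, namely $H-E_i$. Then $(H-E_i)\cdot(E_j-E_i)=-1$, exactly parallel to your correct choice of $H-E_1$ for $E_2-E_1$. Alternatively, $E_j$ is a $(-1)$-curve, so $|E_j|=\{E_j\}$. If $D\geq 0$ were linearly equivalent to $E_j-E_i$, then $D+E_i\in|E_j|$, which is impossible since $E_i\neq E_j$.

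Your bullet ``$\deg(F_a-\ast)<0$ since $13a+13>13$'' is also false for $a=0$ and $\ast=G$. You patch this at the end, but it is cleaner to observe directly that $F_0-G=E_2-E_1$.

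With these two corrections the argument is complete and coincides with the paper's proof. The paper simply asserts $h^0(E_i-E_j)=0$ and $h^0(E_2-E_1)=0$ without further comment.
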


\begin{proof} 
Since $H$ is a nef divisor,
by a direct computation, 
every line bundle $L_{j} \otimes L_{i}^{-1}$ has no global sections, since it has negative intersection numbers with $\CO_{X}(H)$ except for 
$$
\CO_{X}(B_{j}-A), \CO_{X}(B_{j}-B_{i})
\;
\textrm{ and } 
\;
\CO_{X}(F_{0}-G),
$$
where $3\leq j\leq 10$ and $3\leq i<j\leq 10$.
Since $B_{j}-A=H-E_{1}-E_{2}-E_{j}$ and three general points $p_{1}, p_{2}, p_{j}$ are not collinear, so $h^{0}(B_{j}-A)=0$. Moreover, note that $\hom(\CO_{X}(B_{i}),\CO_{X}(B_{j}))= h^{0}(E_{i}-E_{j})=0$ and $\hom(\CO_{X}(G),\CO_{X}(F_{0}))=h^{0}(E_{2}-E_{1})=0$.
This concludes the proof.
\end{proof}

Now we are ready to prove that the exceptional collection \eqref{EC-sequ-10pts-main} is not full.

\begin{prop}\label{ph-10pts-onemore}
For every integer $a\geq 0$,
the height of the exceptional collection \eqref{EC-sequ-10pts-main} is positive. In particular, the exceptional collection \eqref{EC-sequ-10pts-main} is not full.
\end{prop}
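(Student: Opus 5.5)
By Lemma \ref{ht-ge-ph-nonfull}, it suffices to show that the pseudoheight satisfies $\ph(\mathbb{E})>0$, i.e. $\phac(\mathbb{E})\geq -1$ since $\dim X=2$. Here $\mathbb{E}=\{L_{1},\cdots,L_{13}\}$ denotes the collection \eqref{EC-sequ-10pts-main}. The plan is to bound every term in Definition \ref{pseudoheight-defn} from below using the relative heights $e(L_{i},L_{j})$ for $i<j$, together with $e(L_{j},L_{i}\otimes\omega_{X}^{-1})$.

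\textbf{Step 1 (forward relative heights).} By Lemma \ref{10pts-forward-hom-zero-main}, $\Hom(L_{i},L_{j})=0$ for all $i<j$. Hence $e(L_{i},L_{j})\geq 1$, with $e=\infty$ allowed if all Ext groups vanish. So each chain $a_{0}<\cdots<a_{p}$ contributes
\[
\sum_{i=0}^{p-1}e(L_{a_{i}},L_{a_{i+1}})-p\geq 0 .
\]

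\textbf{Step 2 (closing term).} Next I will bound $e(L_{a_{p}},L_{a_{0}}\otimes\omega_{X}^{-1})$, which equals the smallest $k$ with $h^{k}(D_{a_{0}}-D_{a_{p}}-K_{X})\neq 0$. Since this is always $\geq 0$, Steps 1 and 2 already give $\phac\geq 0$, hence $\ph\geq 2>0$. Strictly speaking Step 2 needs no computation: relative heights are nonnegative by definition, being an infimum over $k\in\ZN$.

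\textbf{Step 3 (sharpening, optional).} One may still want to determine the actual value. This would require checking which of the $h^{0}(D_{a_{0}}-D_{a_{p}}-K_{X})$ vanish. For instance, the case $a_{0}=a_{p}$ with $p=0$ gives $h^{0}(-K_{X})=0$, since ten general points do not lie on a cubic, so that term is $\geq 1$. I only need positivity, though, so I would not pursue this beyond a remark.

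The main potential obstacle is purely formal: one must make sure that the forward vanishing of $\Hom$ in Lemma \ref{10pts-forward-hom-zero-main} covers every pair $i<j$, including the pairs involving $F_{a}$ and $F_{a+1}$ and the special pairs $B_{j}-A$, $B_{j}-B_{i}$ and $F_{0}-G$ treated there. Once this is in place, the bound $\ph(\mathbb{E})\geq 2$ follows. Then Lemma \ref{ht-ge-ph-nonfull} gives $h(\mathbb{E})\geq 2>0$, and Lemma \ref{not-full-criterion} gives non-fullness.
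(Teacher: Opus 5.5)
Your proposal is correct and is essentially the paper's own proof. Lemma \ref{10pts-forward-hom-zero-main} gives $e(L_i,L_j)\geq 1$ for $i<j$, and the closing term $e(L_{a_p},L_{a_0}\otimes\omega_X^{-1})$ is nonnegative, so $\phac\geq 0$ and $h\geq\ph\geq 2$; non-fullness then follows from Lemma \ref{ht-ge-ph-nonfull}.

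One minor correction: in this paper $\ZN$ denotes $\mathbb{Z}$, so relative heights are not nonnegative just by definition. They are nonnegative because every object involved is a sheaf, and negative $\Ext$ groups between sheaves vanish. You implicitly use the same fact in Step 1 when you pass from $\Hom=0$ to $e\geq 1$.
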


\begin{proof}
For convenience, we set the exceptional collection
$\mathbb{A}_{a}:=\{L_{1}, \cdots, L_{13}\}$.
Then, the anticanonical pseudoheight of $\mathbb{A}_{a}$ is
$$
\phac(\mathbb{A}_{a})=\min_{1\leq a_{0}<\cdots<a_{p}\leq 13}\Big(\sum_{i=0}^{p-1} e(L_{a_{i}},L_{a_{i+1}})+e(L_{a_{p}},L_{a_{0}}\otimes \omega_{X}^{-1})-p\Big).
$$
By Lemma \ref{10pts-forward-hom-zero-main}, for any $1\leq i<j\leq 13$, the relative height $e(L_{i},L_{j})\geq 1$.
Note that 
$e(L_{a_{p}},L_{a_{0}}\otimes \omega_{X}^{-1})\geq 0$. 
Thus, the anticanonical pseudoheight
$\phac(\mathbb{A}_{a})\geq 0$.
Therefore, we have
$$
h(\mathbb{A}_{a})\geq \ph(\mathbb{A}_{a})=\phac(\mathbb{A}_{a})+\dim X\geq 2.
$$
Then, by Lemma \ref{ht-ge-ph-nonfull}, the exceptional collection \eqref{EC-sequ-10pts-main} is not full.
\end{proof}

\begin{rem}
Let $X$ be the blow-up of $\PB^{2}$ at $10$ points in general position.
Then, by Orlov's blow-up formula, Theorem \ref{mainthm} implies that any blow-up of $X$ at finite points has countably infinitely many phantoms. 
In particular, the blow-up of $\PB^{2}$ at $n$ points in general position has countably infinitely many phantoms, $n\geq 11$.
\end{rem}

%=====================================================================

\section{Comparison of phantoms}\label{Compar-phantoms}

In this section, we prove Theorem \ref{distinct-phantoms-thm}.
More precisely, we distinguish the various phantom categories on smooth rational surfaces by determining the Hochschild cohomology of the phantom categories in Theorem \ref{mainthm}.

Let $X$ be the blow-up of $\PB^{2}$ at $10$ points in general position. 
For every integer $a\geq 0$, we adopt the following notation for convenience:
$$
L_{1}:=\CO_{X}, \; 
L_{2}:=\CO_{X}(A), \; 
L_{j}:=\CO_{X}(B_{j}) \;\; (3\leq j \leq 10), 
$$
and
$$
L_{11}:=\CO_{X}(G), \; 
L_{12}:=\CO_{X}(F_{a}), \; 
L_{13}:=\CO_{X}(F_{a+1}).
$$
We begin by presenting two propositions concerning the Ext groups that will be used in the subsequent arguments.

\begin{prop}\label{10pts-ext2-main}
For any $1\leq i<j\leq 13$,
we have $\ext^{k}(L_{i},L_{j})=0$ for $k\neq 2$ and 
$$
\ext^{2}(L_{i},L_{j})=\chi(L_{i},L_{j})=\chi(L_{j}\otimes L_{i}^{-1}).
$$
\end{prop}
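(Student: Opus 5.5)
The plan is to show that for every $i<j$ the line bundle $L_{j}\otimes L_{i}^{-1}$ has vanishing $H^{0}$ and $H^{1}$. Since $\Ext^{k}(L_{i},L_{j})=H^{k}(X,L_{j}\otimes L_{i}^{-1})$ and $X$ is a surface, this gives $\ext^{k}(L_i,L_j)=0$ for $k\neq 2$. The identity $\ext^{2}=\chi(L_{j}\otimes L_{i}^{-1})$ then follows from $\chi=h^0-h^1+h^2$. Vanishing of $H^{0}$ is exactly Lemma \ref{10pts-forward-hom-zero-main}. So the whole task is the vanishing of $H^{1}(X,\CO_{X}(D_{j}-D_{i}))$, where $D_{i}$ denotes the divisor of $L_{i}$.

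For $H^{1}$ I would use Serre duality, $h^{1}(D_{j}-D_{i})=h^{1}(K_{X}-D_{j}+D_{i})$, together with the fact that $\chi$ is preserved under $\iota$. Since $\iota$ is an isometry fixing $K_{X}$, we have $D_j-D_i=\iota(D'_j-D'_i)$, where the $D'$ are the divisors of the original collection $\mathbb{D}_a$. Hence $\chi(D_{j}-D_{i})=\chi(D'_{j}-D'_{i})$, which I compute by Riemann--Roch. The cleanest route is then to show that $h^{2}(D_{j}-D_{i})=h^{0}(K_X-D_j+D_i)$ equals $\chi(D_{j}-D_{i})$. Granting $h^{0}(D_j-D_i)=0$, we have $h^{2}-h^{1}=\chi$, so $h^1=0$ is equivalent to $h^{0}(K_X-D_j+D_i)=\chi$. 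In other words, the backward-twisted system $|K_X+D_i-D_j|$ must be \emph{non-special}, i.e.\ of expected dimension.

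To check non-speciality I would proceed pair by pair. Writing $K_X+D_i-D_j=dH-\sum m_kE_k$, the degree $d$ is large and positive because the $D$'s contain multiples of $K_X$ with increasing coefficients. There are three cases.
\begin{enumerate}
\item For the short pairs ($\CO_X$, $A$, $B_j$, $G$ among themselves), the multiplicities are small, and I would quote the SHGH conjecture in the range where it is proven (\cite{CM11,DJ07}), or reduce by Cremona transformations (Proposition \ref{Cremnona-transform-consequ}) to a system that is visibly non-special.
\item For pairs involving $F_{a}$ and $F_{a+1}$, the multiplicities grow linearly in $a$. Here I would again run Cremona reductions to bring the system to a standard form, and then prove non-speciality by Dumnicki's diagram-cutting method, this time in its non-speciality version (\cite[Theorem 14]{Dum07}: cut the diagram $\Delta_d$ into pieces each of which is non-special for one point of multiplicity $m_k$, with the leftover piece having the right size). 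I would try partitions modelled on those in Proposition \ref{10pts-NoSect-case1}.
\item An alternative for some pairs is to exhibit $D_j-D_i$ as $-(\text{effective}) $ plus a decomposition such as $-C$ with $C$ a smooth rational curve or an elliptic pencil. Then $h^{1}$ follows from the restriction sequence $0\to\CO(-C)\to\CO\to\CO_C\to0$. Examples are $B_j-B_i=E_i-E_j$, $B_j-A=H-E_1-E_2-E_j$, and $F_0-G=E_2-E_1$ type differences, where $h^1$ is directly computable (e.g.\ $h^1(E_i-E_j)=0$ by restricting to $E_i$).
\end{enumerate}

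The main obstacle is the uniform-in-$a$ non-speciality of the systems attached to $\Ext^{\bullet}(L_{12},L_{13})$, $\Ext^\bullet(L_{11},L_{12})$ and $\Ext^\bullet(L_i,L_{12}),\Ext^\bullet(L_i,L_{13})$, where the multiplicities are unbounded. I would first attempt Cremona reductions to drop the degree as far as possible. Then I would build an explicit affine partition with verified piece sizes $\binom{m_k+1}{2}$ plus a residual region of cardinality exactly $\chi$, mirroring the proof of Proposition \ref{10pts-NoSect-case1}.
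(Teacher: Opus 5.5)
Your reduction matches the paper's: by Lemma~\ref{10pts-forward-hom-zero-main} and Riemann--Roch, everything comes down to $h^{0}(K_X+D_i-D_j)=\chi(D_j-D_i)$, i.e.\ non-speciality of the backward-twisted systems. But the proposal stops exactly where the real work begins, so there is a genuine gap.

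\textbf{The unproved step.} The hard cases are the $a$-dependent families $K_X-F_a=(13a+10)H-(5a+3)E_1-(4a+4)E_2-\sum_{i\ge3}(4a+3)E_i$, $K_X-F_a+A$ and $K_X-F_a+B_j$. For these, SHGH is not available. Your Cremona step is vacuous for the first of them: the excess at $p_1,p_2,p_3$ is $(5a+3)+(4a+4)+(4a+3)-(13a+10)=0$, so the system is already in standard form. What remains is to build an affine partition with pieces of size exactly $\binom{m_k+1}{2}$ and a residual region of size $\chi$, and you never construct it. The partitions in Proposition~\ref{10pts-NoSect-case1} and Appendix~\ref{technique-prop2} only certify emptiness of systems with $\chi\le 0$, so they cannot simply be mirrored. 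As it stands, the uniform-in-$a$ non-speciality, which is the whole content of the proposition, is unproved.

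Two of the cases you list as $a$-dependent actually are not. Since $F_{a+1}-F_a=G$, $\Ext^\bullet(L_{12},L_{13})=\Ext^\bullet(L_1,L_{11})$. Since $F_a-G=F_{a-1}$, $\Ext^\bullet(L_{11},L_{12})$ reduces to $\Ext^\bullet(L_1,L_{12})$ with $a-1$ in place of $a$.

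\textbf{The missing idea.} It is Lemma~\ref{cubic-criterion}: instead of proving non-speciality directly, restrict to a smooth elliptic curve $C_k=3H-\sum_{i\ne k}E_i$.
\begin{itemize}
\item For $D=K_X+D_i-D_j$ you have $h^2(D)=0$ by the forward Hom vanishing.
\item Suppose also $D\cdot C_k=\chi(D)>0$ and $h^0(D-C_k)=0$. Then $0\to\CO_X(D-C_k)\to\CO_X(D)\to\CO_{C_k}(D)\to 0$ gives $h^0(D)\le h^0(\CO_{C_k}(D))=\chi(D)$.
\item Since $h^2(D)=0$, also $h^0(D)\ge\chi(D)$. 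Hence $h^0(D)=\chi(D)$ and $h^1(D)=0$.
\end{itemize}

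The numerics work out as follows.
\begin{itemize}
\item $(K_X-F_a)\cdot C_3=2a+2=\chi$, and $K_X-F_a-C_3$ is exactly the first divisor of Corollary~\ref{10pts-NoSect-case1-cor}.
\item For $a\ge1$, $(K_X-F_a+A)\cdot C_2=2a+1=\chi$, and $K_X-F_a+A-C_2$ is the second divisor of that corollary.
\item For $a\ge1$, $(K_X-F_a+B_j)\cdot C_k=2a+1=\chi$ with $k\ge3$, $k\neq j$. After one Cremona transformation at $p_1,p_2,p_j$, $K_X-F_a+B_j-C_k$ becomes (up to permutation) that same second divisor.
\end{itemize}

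This turns the required non-speciality into emptiness statements that are already proved, so no new non-speciality partitions are needed.
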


\begin{prop}\label{10pts-ext1-main}
For any $1 \leq i< j\leq 13$, we have $\ext^{k}(L_{j},L_{i}\otimes \omega_{X}^{-1})=0$ for $k\neq 1$ and 
$$
\ext^{1}(L_{j},L_{i}\otimes \omega_{X}^{-1})=-\chi(L_{j},L_{i}\otimes \omega_{X}^{-1})=-\chi(L_{i}\otimes L_{j}^{-1}\otimes \omega_{X}^{-1}).
$$
\end{prop}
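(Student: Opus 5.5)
The plan is to reduce the statement to two vanishing statements for line bundles. Write $L_{i}=\CO_{X}(D_{i})$, where $D_{1}=0$, $D_{2}=A$, $D_{j}=B_{j}$ for $3\leq j\leq 10$, $D_{11}=G$, $D_{12}=F_{a}$ and $D_{13}=F_{a+1}$. Then for $i<j$ we have
$$
\Ext^{k}(L_{j},L_{i}\otimes\omega_{X}^{-1})=H^{k}(X,\CO_{X}(D_{i}-D_{j}-K_{X})).
$$
Once $H^{0}$ and $H^{2}$ of $D_{i}-D_{j}-K_{X}$ are shown to vanish, the Euler characteristic reduces to $-\ext^{1}$, which gives the stated formula. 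By Serre duality,
$$
h^{2}(D_{i}-D_{j}-K_{X})=h^{0}(D_{j}-D_{i}+2K_{X}).
$$
So there are two tasks: show that $D_{j}-D_{i}+2K_{X}$ and $D_{i}-D_{j}-K_{X}$ are not effective for all $1\leq i<j\leq 13$.

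\textbf{Step 1: the $H^{2}$-vanishing.} I expect this to follow from the nef divisor $H$, as in the proof of Lemma \ref{10pts-forward-hom-zero-main}.
\begin{itemize}
\item Every difference $D_{j}-D_{i}$ with $i<j$ has the shape $cK_{X}+(\text{small correction})$ with $c\geq 0$. The only exceptions are $B_{j}-A=H-E_{1}-E_{2}-E_{j}$, $B_{j}-B_{i}=E_{i}-E_{j}$ and $F_{0}-G=E_{2}-E_{1}$.
\item Adding $2K_{X}$ contributes $-6$ to the $H$-degree.
\item In the generic cases, $(D_{j}-D_{i}+2K_{X})\cdot H<0$ follows from the formulas for $A,B_{j},G,F_{a}$.
\item In the three exceptional cases the $H$-degree is at most $1-6<0$.
\end{itemize}
Since $H$ is nef, none of these divisors is effective.

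\textbf{Step 2: the $H^{0}$-vanishing.} Here $-(D_{j}-D_{i})-K_{X}$ has large positive $H$-degree, so the nef-divisor argument fails. I would list the divisors explicitly as $dH-\sum m_{k}E_{k}$, treating the pairs in the following groups:
\begin{itemize}
\item $(1,j)$;
\item $(2,j)$;
\item $(B_{i},B_{j})$ and $(B_{i},G)$;
\item pairs involving $F_{a}$ or $F_{a+1}$.
\end{itemize}
For each one I would proceed as follows.
\begin{itemize}
\item First, apply successive Cremona transformations (Proposition \ref{Cremnona-transform-consequ}), always using the point of largest multiplicity together with two others. Either this reaches a system with negative degree or negative virtual data, or it lands, up to permutation of the points, on one of the systems $\mathscr{L}_{a}^{\sigma}$ of Proposition \ref{10pts-NoSect-case1} or on those of Corollary \ref{10pts-NoSect-case1-cor}. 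This is how Corollary \ref{10pts-NoSect-case1-cor} was obtained.
\item For the finitely many pairs not involving $F_{a}$, the multiplicities are bounded. If Cremona reduction does not finish, emptiness can be checked from the known cases of the SHGH conjecture for small multiplicities, or by a small Dumnicki partition (Proposition \ref{Dumnicki-method}).
\end{itemize}

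\textbf{Main obstacle.} I expect the hardest part to be the pairs $(i,12)$, $(i,13)$ and $(12,13)$. In these the multiplicities grow linearly in $a$, for instance $D_{i}-F_{a}-K_{X}\approx(13a+16)H-(5a+5)E_{1}-\cdots$, and uniformity in $a$ is needed. My first attempt would be to Cremona-reduce each of them to a system $\mathcal{L}(13a'+c,5a'+c_{1},\dots)$ of the form covered by Proposition \ref{10pts-NoSect-case1}, possibly with a shift $a'=a+1$. If that fails, I would adapt the affine partition of Proposition \ref{10pts-NoSect-case1}, shifting the constants in $f_{1},\dots,f_{9}$, and redo the line-counting verification of Step 2 in that proof for the new parameters. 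This is presumably where the appendix-level technical work lies.
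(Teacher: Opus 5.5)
Your reduction and Step~1 are fine and match the paper: Serre duality plus nefness of $H$ kills every $\Ext^2$. The finitely many pairs not involving $F_a$ are also genuinely small-multiplicity cases. The gap is the core of Step~2, namely the families $D_i-F_b-K_X$ with $i\le 11$ and $b\in\{a,a+1\}$, and your primary mechanism provably cannot handle them.

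A Cremona transformation is an isometry of $\Pic(X)$ fixing $K_X$, so it preserves $\chi$. A direct Riemann--Roch computation shows that every system in Proposition~\ref{10pts-NoSect-case1} and Corollary~\ref{10pts-NoSect-case1-cor} has $\chi=0$ for \emph{every} $a$, so shifting $a$ does not help. Your divisors instead have $\chi(D_i-D_j-K_X)=\chi(L_j,L_i\otimes\omega_X^{-1})<0$; for example $\chi(-K_X-F_a)=-2a-3$ and $\chi(A-K_X-F_a)=-2a-2$. Moreover, the systems of these two divisors, $\mathcal{L}(13a+16,5a+5,4a+6,(4a+5)^{8})$ and $\mathcal{L}(13a+9,5a+2,(4a+3)^{9})$, already satisfy $m_1+m_2+m_3\le d$, so no Cremona step even applies.

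Your fallback, a new affine partition, is what the paper actually does. Proposition~\ref{10pts-NoSect-case2} proves emptiness of exactly these two families in the Appendix. The other $F_a$-cases reduce to them by one Cremona step ($B_j-K_X-F_a$) or by $a\mapsto a-1$ ($G-K_X-F_a$). In your proposal, however, this is only a placeholder: you name neither the systems to be shown empty nor the partition and line counts, and that is the actual content of the proof.

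Your instinct to recycle earlier vanishings can still be made to work, via multiplication by sections instead of Cremona maps, since adding an effective divisor does change $\chi$.

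For $2\le i<j$: Lemma~\ref{10pts-forward-hom-zero-main} gives $h^0(D_i)=0$, while $\chi(D_i)\in\{1,2,2a+2,2a+4\}$. Hence $h^0(K_X-D_i)=h^2(D_i)>0$. A nonzero section of $\CO_X(K_X-D_i)$ then embeds $H^0(D_i-D_j-K_X)$ into $H^0(-D_j)=\Hom(L_j,L_1)$, which vanishes by Proposition~\ref{EC-10pts-main-prop}.

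For $i=1$, use the identities
\[ -K_X-F_b=-F_{b+1}-(K_X-G),\qquad -K_X-G=-F_1-(K_X-F_0),\qquad -K_X-A+C_2=-G. \]
Here $K_X-G$ and $K_X-F_0$ are effective by the same $\chi$-argument, and $C_2$ is effective. The vanishings $h^0(-F_c)=0$ for all $c\ge 0$ and $h^0(-G)=0$ were shown in the proof of Proposition~\ref{EC-10pts-main-prop}.

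The only case left is $-K_X-B_k=9H-2E_k-3\sum_{l\ne k}E_l$, a genuine small case. The unique nonic with triple points at nine general points is the triple cubic, which misses $p_k$. This route would prove the statement without Proposition~\ref{10pts-NoSect-case2} and its appendix.
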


\subsection{Proof of Proposition \ref{10pts-ext2-main}}

We first have an easy observation:

\begin{lem}\label{cubic-criterion}
Let $D$ be a divisor on $X$ such that $h^{2}(D)=0$.
If there is a smooth elliptic curve $C\subset X$ such that $h^{0}(D-C)=0$ and $D\ldotp C=\chi(D)>0$.
Then $h^{0}(D)=\chi(D)$ and $h^{1}(D)=0$.
\end{lem}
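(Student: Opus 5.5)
The proof uses the restriction sequence to $C$. Since $C$ is an effective divisor, there is a short exact sequence
$$
0\longrightarrow \CO_{X}(D-C)\longrightarrow \CO_{X}(D)\longrightarrow \CO_{C}(D)\longrightarrow 0,
$$
and we read off $h^{0}(D)$ from its long exact cohomology sequence. The hypothesis $h^{0}(D-C)=0$ makes $H^{0}(X,\CO_{X}(D))\to H^{0}(C,\CO_{C}(D))$ injective. Hence
$$
h^{0}(D)\leq h^{0}(C,\CO_{C}(D)).
$$

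Next we compute the right-hand side on $C$. Because $C$ is a smooth curve of genus $1$, the line bundle $\CO_{C}(D)$ has degree $D\ldotp C=\chi(D)>0$. Riemann--Roch on an elliptic curve gives $h^{1}(C,\CO_{C}(D))=h^{0}(C,\CO_{C}(-D))=0$ for positive degree, so
$$
h^{0}(C,\CO_{C}(D))=\deg \CO_{C}(D)=D\ldotp C=\chi(D).
$$
Therefore $h^{0}(D)\leq \chi(D)$.

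For the reverse inequality we use $h^{2}(D)=0$, which gives
$$
\chi(D)=h^{0}(D)-h^{1}(D)\leq h^{0}(D).
$$
Combining the two bounds yields $h^{0}(D)=\chi(D)$, and then $h^{1}(D)=h^{0}(D)-\chi(D)=0$.

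There is no real obstacle here. The only point needing care is the vanishing $h^{1}(C,\CO_{C}(D))=0$, which requires both positive degree and genus exactly $1$; both are given by the hypotheses that $C$ is a smooth elliptic curve and $D\ldotp C=\chi(D)>0$.
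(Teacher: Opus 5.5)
Your proposal is correct and follows essentially the same route as the paper: twist the restriction sequence of $C$ by $\CO_{X}(D)$, use $h^{0}(D-C)=0$ and Riemann--Roch on the elliptic curve to get $h^{0}(D)\leq h^{0}(C,\CO_{C}(D))=D\ldotp C=\chi(D)$, then use $h^{2}(D)=0$ to get $h^{0}(D)\geq\chi(D)$. Your explicit justification of $h^{1}(C,\CO_{C}(D))=0$ via $\omega_{C}\cong\CO_{C}$ and positive degree is a detail the paper leaves implicit.
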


\begin{proof}
Twisting the structure sheaf exact sequence of $C\subset X$ by the line bundle $\CO_{X}(D)$, 
we obtain the following exact sequence
\begin{equation}\label{twisting-SEC-elliptic}
\xymatrix@C=0.5cm
{
0\ar[r]& \CO_{X}(D-C)\ar[r] & \CO_{X}(D)\ar[r] & \CO_{C}(D) \ar[r] & 0.
}    
\end{equation}
Since $C$ is a smooth elliptic curve and $D\ldotp C=\chi(D)>0$, by Riemann--Roch theorem on $C$, we have 
$h^{0}(C,\CO_{C}(D))=D\ldotp C=\chi(D)$.
Since $h^{0}(D-C)=0$, by the exact sequence of cohomology of \eqref{twisting-SEC-elliptic}, we get  
$$
h^{0}(D)\leq h^{0}(C,\CO_{C}(D))=\chi(D).
$$
Since $h^{2}(D)=0$, by Riemann--Roch theorem on $X$, we have 
$$
h^0(D)=\chi(D)+h^{1}(D)\geq \chi(D).
$$
Hence, $h^0(D)=\chi(D)$ and  $h^{1}(D)=0$.
\end{proof}

From now on, for each integer $1\le k\le10$, we set
$$
C_{k}:=3H-\sum_{1\leq i\leq 10,\\ i\neq k}E_i.
$$
Then, $C_{k}$ is the strict transform of the unique smooth cubic through the nine general points $p_{i}$, $1\leq i\leq 10$ and $i\neq k$.
Thus, $C_{k}$ is an irreducible curve.
Moreover, we have
$C_{k}^{2}=0$, $K_{X}\ldotp C_{k}=0$ and the arithmetic genus $p_{a}(C_{k})=1$.
As a result, $C_{k}\subset X$ is a smooth elliptic curve.

Now we are ready to prove Proposition \ref{10pts-ext2-main}.

\begin{proof}[Proof of Proposition \ref{10pts-ext2-main}]
Based on lemma \ref{10pts-forward-hom-zero-main}, by Riemann--Roch theorem, it suffices to show that 
all $\ext^{2}$ are equal to the Euler characteristic. 
By Serre duality, we have
$$
\ext^{k}(L_{i},L_{j})=h^{2-k}( L_{i}\otimes L_{j}^{-1}\otimes \omega_{X}),
$$
for any $1 \leq i<j\leq 13$ and $k\in \ZN$.
Then,  Lemma \ref{10pts-forward-hom-zero-main} yields that $h^{2}( L_{i}\otimes L_{j}^{-1}\otimes \omega_{X})=0$ for any $1\leq i<j\leq 13$.

Since $K_{X}+A-B_{i}$ intersects negatively with $H$, so we have
$$
\ext^{2}(\CO_{X}(A),\CO_{X}(B_{j}))=0=\chi(B_{j}-A), 
$$
where $3\leq j\leq 10$.
Note that
$\ext^{2}(\CO_{X}(B_{i}),\CO_{X}(B_{j}))=h^{2}(E_{i}-E_{j})=0$ for any $1\leq i<j\leq 10$, and
$\ext^{2}(\CO_{X}(F_{a}),\CO_{X}(F_{a+1}))=\ext^{2}(\CO_{X},\CO_{X}(G))$.
Therefore, it is sufficient to determine the following cases:

(1) Set $\displaystyle D:=K_{X}-A=4H-2E_1-2E_2-\sum_{i=3}^{10}E_i$.
Then, by Riemann--Roch theorem, we have $D\ldotp C_{3}=1=\chi(D)$.
Since $D-C_{3}=H-E_{1}-E_{2}-E_{3}$, so $h^{0}(D-C_{3})=0$, since the points $p_{1}$, $p_{2}$ and $p_{3}$ are in general position.
Thus, by Lemma \ref{cubic-criterion}, we have 
$$
\ext^{2}(\CO_{X},\CO_{X}(A))=h^{0}(K_{X}-A)=1=\chi(\CO_{X},\CO_{X}(A)).
$$
Likewise, since $\displaystyle K_{X}-G+B_{j}=4H-2E_1-2E_j-\sum_{i\notin\{1,j\}}E_{i}$, applying Lemma \ref{cubic-criterion} to $C_{2}$, we get 
$$
\ext^{2}(\CO_{X}(B_{j}),\CO_{X}(G))=h^{0}(K_{X}-G+B_{j})=1=\chi(\CO_{X}(B_{j}),\CO_{X}(G)).
$$

(2) Since $\displaystyle K_{X}-B_{j}=3H-\sum_{i\neq j} E_{i}$, $3\leq j\leq 10$ and there is a unique cubic passing through nine general points,
we obtain
$$
\ext^{2}(\CO_{X},\CO_{X}(B_{j}))=h^{0}(K_{X}-B_{j})=1=\chi(\CO_{X},\CO_{X}(B_{j})), 
$$
where $3\leq j\leq 10$.
Similarly, since $K_{X}-G+A=3H-E_{1}-\sum_{i=3}^{10}E_{i}$,  we have
$$
\ext^{2}(\CO_{X}(A),\CO_{X}(G))=h^{0}(K_{X}-G+A)=1=\chi(\CO_{X}(A),\CO_{X}(G)).
$$

(3) Set $\displaystyle D:=K_{X}-G=10H-4E_{1}-3\sum_{i=2}^{10}E_{i}$. 
By Riemann--Roch theorem, we have $D\ldotp C_{2}=2=\chi(D)$. 
Since $D-C_{2}=7H-3E_{1}-3E_{2}-\sum_{i=3}^{10}2E_{i}$, up to permutations,
by Corollary \ref{10pts-NoSect-case1-cor}, we obtain $h^{0}(D-C_{2})=0$.
It follows from Lemma \ref{cubic-criterion} that
$$
\ext^{2}(\CO_{X},\CO_{X}(G))=h^{0}(K_{X}-G)=2=\chi(\CO_{X},\CO_{X}(G)).
$$

(4) Set $D:=K_{X}-F_{a}=-(4a+3)K_{X}+(a+1)H-aE_{1}-E_{2}$.
It follows from Riemann--Roch theorem that $D\ldotp C_{3}=2a+2=\chi(D)$.
Since
$$
D-C_{3}=(13a+7)H-(5a+2)E_{1}-(4a+3)(E_{2}+E_{3})-\sum_{i=4}^{10}(4a+2)E_{i},
$$
by Corollary \ref{10pts-NoSect-case1-cor}, we get $h^{0}(D-C_{3})=0$.
Then, Lemma \ref{cubic-criterion} yields
$$
\ext^{2}(\CO_{X},\CO_{X}(F_{a}))=h^{0}(K_{X}-F_{a})=2a+2=\chi(\CO_{X},\CO_{X}(F_{a})),
$$
where $a\geq 0$.
In particular, since $\ext^{2}(\CO_{X}(G),\CO_{X}(F_{a}))
=h^{0}(-(4a-1)K_{X}+aH-(a-1)E_{1}-E_{2})$, 
so we get 
$$
\ext^{2}(\CO_{X}(G),\CO_{X}(F_{a}))=h^{0}(K_{X}-F_{a}+G)=2a=\chi(\CO_{X}(G),\CO_{X}(F_{a})),
$$
where $a\geq 0$.

(5) Set $\displaystyle D:=K_{X}-F_{a}+A=-(4a+1)K_{X}+aH-(a-1)E_{1}$.
Note that $D\ldotp C_{2}=2a+1$.
According to Riemann--Roch theorem, $\chi(D)=2a+1=D\ldotp C_{2}$.
If $a=0$, then 
$D-C_{2}=E_{1}-E_{2}$ and thus $h^{0}(D-C_{2})=0$.
If $a\geq 1$, then
$$
D-C_{2}=13aH-(5a-1)E_{1}-(4a+1)E_{2}-\sum_{i=3}^{10} 4aE_{i}.
$$
By Corollary \ref{10pts-NoSect-case1-cor}, we have $h^{0}(D-C_{2})=0$.
Hence, by Lemma \ref{cubic-criterion}, we derive
$$
\ext^{2}(\CO_{X}(A),\CO_{X}(F_{a}))=h^{0}(K_{X}-F_{a}+A)=2a+1=\chi(\CO_{X}(A),\CO_{X}(F_{a})),
$$
where $a\geq 0$.

(6) Set $D:=K_{X}-F_{a}+B_{j}=-(4a+1)K_{X}+(a+1)H-aE_{1}-E_{2}-E_{j}$, $3 \leq j\leq 10$.
For every fixed $j$, let $3\leq k\leq 10$ and $k\neq j$, then $D\ldotp C_{k}=2a+1$.
By Riemann--Roch theorem, we have $D\ldotp C_{k}=2a+1=\chi(D)$.
If $a=0$, then $D-C_{k}=H-E_{2}-E_{j}-E_{k}$. Thus, $h^{0}(D-C_{k})=0$.
If $a\geq 1$, then 
$$
D-C_{k}=(13a+1)H-5aE_{1}-(4a+1)(E_{2}+E_{j}+E_{k})-\sum_{i\notin \{1,2,j,k\}} 4aE_{i}.
$$
Applying Cremona transformation, we get 
$$
h^{0}(D-C_{k})=
h^{0}(13aH-(5a-1)E_{1}-(4a+1)E_{k}-\sum_{i\neq k} 4aE_{i}).
$$
Then, up to permutations, by Proposition \ref{Cremnona-transform-consequ} and Corollary \ref{10pts-NoSect-case1-cor}, we obtain $h^{0}(D-C_{k})=0$.
Finally, by Lemma \ref{cubic-criterion}, we have
$$
\ext^{2}(\CO_{X}(B_{j}),\CO_{X}(F_{a}))=h^{0}(K_{X}-F_{a}+B_{j})=2a+1=\chi(\CO_{X}(B_{j}),\CO_{X}(F_{a})),
$$
where $3\leq j\leq 10$ and $a\geq 0$.
\end{proof}

%-----------------------------------------------------------------------

\subsection{Proof of Proposition \ref{10pts-ext1-main}}

We first obtain the following result:

\begin{prop}\label{10pts-NoSect-case2}
For every integer $a\geq 0$, let $D$ be one of the divisors:
\begin{align*}
& (13a+9)H-(5a+2)E_{1}-\sum_{i=2}^{10}(4a+3)E_{i}, \\
& (13a+16)H-(5a+5)E_{1}-(4a+6)E_{2}-\sum_{i=3}^{10}(4a+5)E_{i}.
\end{align*}
Then $H^{0}(X,\CO_{X}(D))=0$.
\end{prop}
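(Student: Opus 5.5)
The plan is to prove that the two linear systems
$$
\mathscr{K}_{a}^{0}:=\mathcal{L}(13a+9,\,5a+2,\,(4a+3)^{9})
\quad\textrm{and}\quad
\mathscr{K}_{a}^{1}:=\mathcal{L}(13a+16,\,5a+5,\,4a+6,\,(4a+5)^{8})
$$
are empty, using Proposition \ref{Dumnicki-method} in the same way as in the proof of Proposition \ref{10pts-NoSect-case1}.

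\textbf{Why not reduce to earlier results.} These divisors are exactly $(A-F_{a})-K_{X}$ and $-F_{a}-K_{X}$: adding $-K_{X}=3H-\sum E_{i}$ raises the degree by $3$ and every multiplicity by $1$. This is why they appear in the proof of Proposition \ref{10pts-ext1-main}. Emptiness of $A-F_{a}$ and $-F_{a}$ does not imply emptiness of these systems. A Cremona reduction to Proposition \ref{10pts-NoSect-case1} or Corollary \ref{10pts-NoSect-case1-cor} is also unavailable: for $\mathscr{K}_{a}^{0}$ the excess $c=m_{i}+m_{j}+m_{k}-d$ is $-1$ or $-a$ for every choice of three points, and for $\mathscr{K}_{a}^{1}$ it is at most $0$. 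So I will cut the diagrams directly.

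\textbf{Setting up one family.} As before, I treat both systems at once with a parameter. Write $d$ for the degree, $M$ for the multiplicity at $p_{1}$ and $m$ for the common multiplicity at the other points. In the second system one point has multiplicity $m+1$, namely $p_{2}$. I would assign that point to one of the corner pieces $\mathrm{D}_{2}$ or $\mathrm{D}_{3}$, whose staircase shape is easiest to enlarge by one row.

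A quick check of virtual dimensions at $a=0$ gives $-2$ and $-3$ respectively. Both are negative, so the expected answer is emptiness. The small slack suggests that cuts should follow the same geometry as Figure \ref{affine-partition}, with thresholds shifted:
\begin{enumerate}
\item[(1)] $f_{1}$ cuts off the corner triangle of size $M$.
\item[(2)] $f_{2},f_{3}$ cut the two outer corners of size $m$ (or $m+1$) at $x,y>d-m$.
\item[(3)] $f_{4},f_{5}$ cut slopes $\pm1$ along the axes at a shifted offset $q$, chosen so that each strip has exactly $m$ columns.
\item[(4)] $f_{6},f_{7},f_{8}$ cut the slope-$3$ and slope-$1$ wedges.
\item[(5)] $f_{9}$ is a slope $-1/3$ cut, with a new constant $T$; the remainder $\mathrm{D}_{10}$ is the central piece.
\end{enumerate}
I will fix $q,T$ and the wedge offsets by requiring each piece to have exactly $M(M+1)/2$ or $m(m+1)/2$ lattice points. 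The degree exceeds $\sum_{i}$ (sizes) by a fixed linear amount, and I will solve for the offsets linearly in $a$.

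\textbf{Verifying the pieces.}
\begin{enumerate}
\item[(a)] Pieces $\mathrm{D}_{1},\dots,\mathrm{D}_{5}$ are staircase triangles. Their vertical or horizontal lines with $k$ points each are immediate, as in cases (1)--(5) of Proposition \ref{10pts-NoSect-case1}.
\item[(b)] For $\mathrm{D}_{6},\mathrm{D}_{7},\mathrm{D}_{8}$, I will describe each piece column by column and use the alternating choice of lines (even $k$ from one end, odd $k$ from the other). Each column meets the piece in a set whose size is governed by $\min\{3r-1,\,2m-r\}$.
\item[(c)] For $\mathrm{D}_{9}$ and $\mathrm{D}_{10}$, I will use the residue-mod-$4$ selection of lines. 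I expect the floor terms $\lfloor (r+\cdot)/3\rfloor$ and the parity of the extra $+1$ multiplicity to force a case split by the residue of $a$, or by the parameter $\sigma$.
\end{enumerate}

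\textbf{Main obstacle.} The hardest step will be the bookkeeping for $\mathrm{D}_{9}$ and $\mathrm{D}_{10}$. There I must show that the column (respectively row) lengths, after sorting, are exactly $1,2,\dots,m$. Each length is a min/max of linear functions with a floor, and sorted lengths $1,\dots,m$ are exactly what Proposition \ref{Dumnicki-method} requires.

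I would first compute the partition numerically for $a=0,1,2$ to guess the correct constants. Then I would prove the counting identities by splitting $r$ into ranges where the min/max are attained by a fixed branch. If the shifted-threshold partition fails, the fallback for $\mathscr{K}^{1}_{a}$ is the following. First check whether a Cremona transformation with $c=0$ at $p_{1},p_{2},p_{3}$ (available when $a=0$) or a permutation lets the $(m+1)$-point be absorbed into $\mathrm{D}_{2}$. If so, run the identical partition with $m+1$ lines there.
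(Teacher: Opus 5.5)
\textbf{Verdict.} Your route is the paper's: a Dumnicki cutting with the same ten-piece geometry as in Proposition \ref{10pts-NoSect-case1}, with the extra-multiplicity point placed in the corner piece $\mathrm{D}_{2}$. The appendix writes the system as $\mathcal{L}(d,M,q,m^{8})$ with $d=13a+9+7\sigma$, $M=5a+2+3\sigma$, $q=4a+3+3\sigma$, $m=4a+3+2\sigma$. Your preliminary remarks are correct: these divisors are $(A-F_{a})-K_{X}$ and $-F_{a}-K_{X}$, and no Cremona reduction is available. However, the proposal never writes down the partition, and the one concrete rule you give for finding it cannot succeed.

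\textbf{Why your calibration rule fails.} You computed the virtual dimensions yourself; in general $\chi(D)=-2a-2$ (resp.\ $-2a-3$). So $\#\Delta_{d}=\binom{d+2}{2}$ is smaller than $\sum_{i}\binom{m_{i}+1}{2}$ by $2a+2$ (resp.\ $2a+3$). Any piece satisfying the criterion has $\#\mathrm{D}_{i}\le\sum_{k=1}^{m_{i}}k=\binom{m_{i}+1}{2}$, and the pieces partition $\Delta_{d}$. Hence it is impossible for every piece to contain exactly $\binom{m_{i}+1}{2}$ points, or to have sorted line lengths exactly $1,\dots,m$. The equations you propose to solve for $q$, $T$ and the wedge offsets therefore have no solution. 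This is exactly where the present case differs from Proposition \ref{10pts-NoSect-case1}, where $\chi=0$ and all ten pieces are full. Also, Proposition \ref{Dumnicki-method} does not require exact lengths, only $\#(\mathrm{D}_{i}\cap\ell_{i,k})\le k$. The missing idea is to decide where a deficit growing linearly in $a$ goes.

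\textbf{How the paper places the deficit.} In the paper, $\mathrm{D}_{1},\dots,\mathrm{D}_{8}$ are full. $\mathrm{D}_{9}$ is one point short, and $\mathrm{D}_{10}$ has $(m^{2}+1)/2=\binom{m+1}{2}-\frac{m-1}{2}$ points. The lines in one residue class of $k$ modulo $4$ meet $\mathrm{D}_{10}$ in fewer than $k$ points. For example, at $a=\sigma=0$ the cuts give $\mathrm{D}_{9}=\{(2,2),(3,2),(3,3),(4,3),(4,4)\}$ and $\mathrm{D}_{10}=\{(1,1),(2,0),(2,1),(3,0),(3,1)\}$, five points each instead of six.

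\textbf{What is missing.} The explicit cuts are
$f_{4}=x-y-5a-4-2\sigma+\varepsilon$,
$f_{5}=-x+y-5a-4-3\sigma+\varepsilon$,
$f_{6}=3x-y-15a-10-7\sigma+\varepsilon$,
$f_{7}=-3x+y+3a+1+\sigma+\varepsilon$,
$f_{8}=-x+y-a-1-\sigma+\varepsilon$ and
$f_{9}=x+3y-15a-7-9\sigma+\varepsilon$.
Note that the $f_{4}$ and $f_{5}$ offsets differ when $\sigma=1$, so they are not a common offset $q$. Producing such constants and verifying the line conditions uniformly in $a$ is the whole content of the proof. Your proposal defers this to numerical guessing.

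\textbf{Fallback.} Your fallback does nothing. The excess $c=0$ at $p_{1},p_{2},p_{j}$ holds for every $a$, not only $a=0$. A transformation with $c=0$ returns the same system, and the definition requires $c>0$, so it cannot move the extra multiplicity anywhere.
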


The proof of Proposition \ref{10pts-NoSect-case2} is contained in Appendix \ref{technique-prop2}, which is the same as that of Proposition \ref{10pts-NoSect-case1}. 
Temporarily admitting this proposition, we finish the proof of Proposition \ref{10pts-ext1-main}.

\begin{proof}[Proof of Proposition \ref{10pts-ext1-main}]
For any $1 \leq i<j\leq 13$, by Riemann--Roch theorem, the Euler characteristic   $\chi(L_{j},L_{i}\otimes \omega_{X}^{-1})<0$. Hence, it suffices to show that all the $\Hom$-spaces and $\Ext^{2}$-spaces are trivial.
For $1 \leq i<j\leq 13$,
by Serre duality,  we have
$$
\ext^{2}(L_{j},L_{i}\otimes \omega_{X}^{-1})=h^{0}(L_{j}\otimes L_{i}^{-1}\otimes \CO_{X}(2K_{X})).
$$
By direct computations, we get $\big(L_{j}\otimes L_{i}^{-1}\otimes \CO_{X}(2K_{X})\ldotp\CO_{X}(H) \big)<0$.
Since $H$ is nef, so all $\Ext^{2}$-spaces are trivial.
In the following, we show that all $\Hom$-spaces are trivial.

Firstly, we note that $\displaystyle A-B_{j}-K_{X}=2H-\sum_{i\neq 1,2,j} E_{i}$. Since no seven general points are on the same conic, so we have
$$
\hom(\CO_{X}(B_{j}),\CO_{X}(A-K_{X}))=
h^{0}(A-B_{j}-K_{X})=0.
$$
Note that $B_{i}-K_{X}-B_{j}=C_{j}-E_{i}$.
Consider the exact sequence
$$
\xymatrix@C=0.5cm
{ 
0\ar[r]& \CO_{X}(-E_{i})
\ar[r]& \CO_{X}(C_{j}-E_{i})
\ar[r]& \CO_{C_{j}}(C_{j}-E_{i}) \ar[r]& 0.
}
$$
Since $(C_{j}-E_{i})\ldotp C_{j}=-1$ and $h^{k}(-E_{i})=0$ for $k\in \ZN$, 
the above exact sequence yields that
$$
\hom(\CO_{X}(B_{j}),\CO_{X}(B_{i}-K_{X}))=h^{0}(B_{i}-K_{X}-B_{j})=h^{0}(C_{j}-E_{i})=0.
$$

Secondly, since the divisors
\begin{align*}
-K_{X}-B_{j}&=9H-2E_{j}-\sum_{i\neq j} 3E_{i},
\qquad 
G-K_{X}=16H-6E_{1}-\sum_{i=2}^{10}5E_{i},\\
A-K_{X}-G &=9H-2E_{2}-\sum_{i\neq 2} 3E_{i},\\
-K_{X}-F_{a}&=(13a+16)H-(5a+5)E_{1}-(4a+6)E_{2}-\sum_{i=3}^{10} (4a+5)E_{i}, \\
G-K_{X}-F_{a}&=(13a+3)H-5aE_{1}-(4a+2)E_{2}-\sum_{i=3}^{10} (4a+1)E_{i}, \\
A-K_{X}-F_{a}&=(13a+9)H-(5a+2)E_{1}-\sum_{i=2}^{10}(4a+3)E_{i},
\end{align*}
by Proposition \ref{10pts-NoSect-case2}, 
we have 
\begin{align*}
\hom(\CO_{X}(B_{j}),\CO_{X}(-K_{X}))&= h^{0}(-K_{X}-B_{j}) =0, \\
\hom(\CO_{X}(G),\CO_{X}(-K_{X}))& =h^{0}(-K_{X}-G)=0,\\
\hom(\CO_{X}(G),\CO_{X}(A-K_{X}))&=h^{0}(A-K_{X}-G)=0,\\
\hom(\CO_{X}(F_{a}),\CO_{X}(-K_{X}))&=
h^{0}(-K_{X}-F_{a})=0,\\
\hom(\CO_{X}(F_{a}),\CO_{X}(A-K_{X}))&= h^{0}(A-K_{X}-F_{a})=0,\\
\hom(\CO_{X}(F_{a}),\CO_{X}(G-K_{X}))&=h^{0}(G-K_{X}-F_{a})=0.
\end{align*}

Additionally, since $\displaystyle -A-K_{X}=10H-4E_{1}-4E_{2}-\sum_{i=3}^{10}3E_{i}$, 
by Proposition \ref{Cremnona-transform-consequ} and Proposition \ref{10pts-NoSect-case2}, we obtain
$$
h^{0}(10H-4E_{1}-4E_{2}-\sum_{i=3}^{10}3E_{i})=h^{0}(9H-3E_{1}-3E_{2}-2E_{2}-\sum_{i=4}^{10}3E_{i})=0.
$$
Thus, we have
$$
\hom(\CO_{X}(A),\CO_{X}(-K_{X}))=h^{0}(-A-K_{X})=0.
$$
Similarly, since $B_{j}-G-K_{X}=-3K_{X}+H-E_{1}-E_{j}$, we obtain
$$
\hom(\CO_{X}(G),\CO_{X}(B_{j}-K_{X}))=h^{0}(B_{j}-G-K_{X})=0.
$$

Finally, since
$
B_{j}-K_{X}-F_{a}=
(13a+10)H-(5a+3)E_{1}-(4a+4)(E_{2}+E_{j})-\sum_{i\neq 1,2,j}^{10}(4a+3)E_{i},
$
by Proposition \ref{Cremnona-transform-consequ}, we have
$$
h^{0}(B_{j}-K_{X}-F_{a})
=h^{0}((13a+9)H-(5a+2)E_{1}-(4a+3)(E_{2}+E_{j})-\sum_{i\neq 1,2,j}^{10}(4a+3)E_{i}).
$$
Thus, by Proposition \ref{10pts-NoSect-case2}, we get
$$
\hom(\CO_{X}(F_{a}),\CO_{X}(B_{j}-K_{X}))=h^{0}(B_{j}-K_{X}-F_{a})=0.
$$
This completes the proof. \end{proof}

\subsection{Proof of Theorem \ref{distinct-phantoms-thm}}

In the setting of  Theorem \ref{mainthm}, 
for each integer $a\geq 0$, we set the exceptional collection
\begin{align*}
\mathbb{A}_{a}&=
\{L_{1},L_{2},L_{3},\cdots,L_{10},L_{11},L_{12},L_{13}\}\\
& = \{\CO_{X},\CO_{X}(A),\CO_{X}(B_{3}),\cdots,\CO_{X}(B_{10}),\CO_{X}(G),\CO_{X}(F_{a}),\CO_{X}(F_{a+1})\}.
\end{align*}
We use $\langle \mathbb{A}_{a} \rangle$ to denote the full triangulated subcategory of $\DC(X)$ generated by the exceptional collection $\mathbb{A}_{a}$ and $\RA_{X}^{(a)}$ its right orthogonal complement.

\begin{prop}\label{HH-A-phantom}
For every integer $a\geq 0$, 
the dimensions of the Hochschild cohomology groups of $\RA_{X}^{a}$ are given as follows:
\begin{align*}
\dim \HH^{0}(\RA_{X}^{(a)})&=1, & \dim \HH^{1}(\RA_{X}^{(a)})&=0, \\
\dim \HH^{2}(\RA_{X}^{(a)})&=12, & \dim \HH^{3}(\RA_{X}^{(a)})&=88a^{2}+220a+206, \\
\dim \HH^{4}(\RA_{X}^{(a)})& = 248a^{2}+620a+373, & \dim \HH^{5}(\RA_{X}^{(a)})&=232a^{2}+580a+180,\\
\dim \HH^{6}(\RA_{X}^{(a)})&= 72a^{2}+180a, & \dim \HH^{i}(\RA_{X}^{(a)})&=0, \, i\geq 7.
\end{align*}
In particular, the height $h(\mathbb{A}_{a})=4$.
\end{prop}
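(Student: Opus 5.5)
The plan is to compute $\HH^{\bullet}(\RA_{X}^{(a)})$ from the triangle of Theorem \ref{thm:nhh_triang},
$$
\NHH^{\bullet}(\langle\mathbb{A}_{a}\rangle,X)\to \HH^{\bullet}(X)\to \HH^{\bullet}(\RA_{X}^{(a)}).
$$
For $X$ the blow-up of $\PB^{2}$ at $10$ points, HKR gives $\HH^{0}(X)=\CN$, $\HH^{1}(X)=H^{0}(T_{X})=0$, and $\HH^{2}(X)=H^{1}(T_{X})\oplus H^{0}(\wedge^{2}T_{X})$. Here $H^{0}(-K_{X})=0$, and $h^{1}(T_{X})=2\cdot 10-8=12$ because $h^{2}(T_{X})=h^{0}(\Omega_{X}(K_{X}))=0$. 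All higher $\HH^{i}(X)$ vanish: $H^{1}(-K_{X})=0$ since $\chi(-K_{X})=1+K_{X}^{2}=0$ and $h^{0}=h^{2}=0$. Hence $\HH^{\bullet}(X)=\CN[0]\oplus\CN^{12}[-2]$. The whole problem is therefore to compute $\NHH^{\bullet}$, show it is concentrated in degrees $\ge 4$ (so $h(\mathbb{A}_a)=4$), and read off $\HH^{i}(\RA)=\NHH^{i+1}$ for $i\ge 3$ together with $\HH^{0}=1$, $\HH^{1}=0$, $\HH^{2}=12$.

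To compute $\NHH$ I would use the spectral sequence of Proposition \ref{NHH--spectral-prop}. By Proposition \ref{10pts-ext2-main}, every forward $\Ext^{\bullet}(L_{i},L_{j})$ with $i<j$ is concentrated in degree $2$, of dimension $\chi(L_{j}\otimes L_{i}^{-1})$. By Proposition \ref{10pts-ext1-main}, every $\Ext^{\bullet}(L_{a_p},L_{a_0}\otimes\omega_X^{-1})$ with $a_0<a_p$ is concentrated in degree $1$. For $p=0$ the term is $H^{\bullet}(-K_{X})=0$. A chain of length $p$ therefore contributes only in total degree $q-p=2p+1-p=p+1$ with $p\ge1$, so $E_{1}^{-p,q}$ sits on a single diagonal.

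The next step is to show the spectral sequence degenerates at $E_{1}$. All nonzero terms have $q=3p+1$, whereas the differential $d_{r}$ maps $E_{r}^{-p,q}\to E_{r}^{-p+r,q-r+1}$. The target would need $q-r+1=3(p-r)+1$, i.e. $2r=0$, which is impossible, so every $d_r$ vanishes. Consequently $\dim\NHH^{p+1}=\sum_{a_0<\dots<a_p}\prod_i \chi(L_{a_{i+1}}\otimes L_{a_i}^{-1})\cdot(-\chi(L_{a_0}\otimes L_{a_p}^{-1}\otimes\omega_X^{-1}))$. These products are nonzero only for chains whose consecutive $\chi$'s are nonzero, which limits $p\le 4$ or so and yields degrees $2$ through $5$ plus the shift.

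It remains to check that $\NHH^{k}=0$ for $k\le 3$. Degree $2$ comes from $p=1$ terms, which have degree $1+1=2$, so I must verify these vanish, or else reconcile them with the triangle. More precisely, the triangle map $\NHH^{2}\to\HH^{2}(X)$, together with $\NHH^{3}$, determines $\HH^{2}$ and $\HH^{3}(\RA)$. The expected obstacle, and the main bookkeeping, is computing these Euler characteristics for all pairs and chains from the intersection table (e.g. $\chi(G)=2$, $\chi(F_a)=2a+2$, $\chi(F_a-G)=2a$, $\chi(F_a-A)=2a+1$, $\chi(B_j)=1$, $\chi(A)=1$, $\chi(G-A)=1$, $\chi(G-B_j)=1$, and the backward $-\chi$ values via Riemann–Roch) and summing over chains to get the quadratic polynomials in $a$. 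If some $p=1$ contributions are nonzero in degree $2$, the height claim forces them to be zero, so I would recheck the grading: with $e=2$ forward and $e=1$ backward, the pseudoheight is $2p+1-p+2=p+3\ge4$. This grading matches $h=4$, so the total degree is in fact $p+3$: the $p=1$ chains land in $\NHH^{4}$, $p=2$ in $\NHH^{5}$, $p=3$ in $\NHH^{6}$, and $p=4$ in $\NHH^{7}$. This gives $\HH^{3},\dots,\HH^{6}$ of $\RA$ and shows $\NHH^{\le3}=0$, hence $h(\mathbb{A}_a)=4$ and $\HH^{0,1,2}(\RA)=\HH^{0,1,2}(X)$. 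The final numerical sums are routine but lengthy.
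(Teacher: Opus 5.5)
Your overall route is the paper's: $\NHH^{\le 3}=0$, the triangle with $\HH^{\bullet}(X)=\CN\oplus\CN^{12}[-2]$ (your HKR justification of this is fine, and more than the paper gives), Kuznetsov's spectral sequence supported on a single line, degeneration at $E_1$, and a count over chains. The one real defect is how you obtain the grading. Your first computation, giving total degree $p+1$, omits the shift in $\mathcal{S}^{-1}(-)=-\otimes\omega_X^{-1}[-2]$. Your repair then runs backwards: you infer the correct grading from the height you are trying to prove (``the height claim forces them to be zero, so I would recheck the grading''). The direct argument is
\[
\Ext^{k_p}(L_{a_p},\mathcal{S}^{-1}(L_{a_0}))=\Ext^{k_p-2}(L_{a_p},L_{a_0}\otimes\omega_X^{-1}),
\]
which by Proposition~\ref{10pts-ext1-main} is nonzero only for $k_p=3$. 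Together with $k_0=\dots=k_{p-1}=2$ from Proposition~\ref{10pts-ext2-main}, this gives $E_1^{-p,q}\neq 0$ only for $q=2p+3$, i.e.\ total degree $p+3$. Your pseudoheight count $2p+1-p+2$ is exactly this computation, with the $+2=\dim X$ being the Serre shift. So state it as the reason, and delete the intermediate claims (degree $p+1$, ``degrees $2$ through $5$''). The degeneration check must also be redone on the correct line. You wrote $q=3p+1$ and concluded $2r=0$, and neither is right. With $q=2p+3$, a nonzero $d_r$ would require $2p+4-r=2(p-r)+3$, i.e.\ $r=-1$, so all differentials vanish.

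Two smaller points. First, the equality $h(\mathbb{A}_a)=4$, not just $h(\mathbb{A}_a)\geq 4$, needs $\NHH^{4}=E_1^{-1,5}\neq 0$; this follows from the $p=1$ count $88a^2+220a+206>0$. Second, the bound $p\le 4$ (hence $\HH^{\ge 7}=0$) holds because $\Ext^{\bullet}(L_i,L_j)=0$ for $2\le i<j\le 10$. A chain with nonzero weight therefore contains at most one of $L_2,\dots,L_{10}$. These nine objects have identical forward and backward dimensions against $L_1,L_{11},L_{12},L_{13}$: namely $(1,2)$, $(1,2)$, $(2a+1,2a+2)$, $(2a+3,2a+4)$, with backward dimension $1$ among themselves. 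This makes the sums short. For instance, the only $p=4$ chains are $(1,j,11,12,13)$ with $2\le j\le 10$, contributing $9\cdot 1\cdot 1\cdot 2a\cdot 2\cdot(2a+5)=72a^2+180a$, which matches $\dim\HH^{6}$. The other three sums likewise reproduce the stated polynomials.
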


\begin{proof}
Recall that the anticanonical pseudoheight is
$$
\phac(\mathbb{A}_{a})=\min_{1\leq a_{0}<\cdots<a_{p}\leq 13}\Big(\sum_{i=0}^{p-1} e(L_{a_{i}},L_{a_{i+1}})+e(L_{a_{p}},L_{a_{0}}\otimes \omega_{X}^{-1})-p\Big).
$$
If $p=0$, then by Remark \ref{no-section-anticanonical}, the relative height 
$$
e(L_{a_{0}},L_{a_{0}}\otimes \omega_{X}^{-1})=e(\CO_{X}, \CO_{X}(-K_{X}))=+\infty.
$$
Suppose $p\geq 1$.  By Proposition \ref{10pts-ext2-main}, we have  $e(L_{a_{i}},L_{a_{i+1}})=2$ or $+\infty$ for $0 \leq i\leq p-1$. Moreover, by Proposition \ref{10pts-ext1-main}, we have  $e(L_{a_{p}},L_{a_{0}}\otimes \omega_{X}^{-1})=1$.
It follows that
$$
\sum_{i=0}^{p-1} e(L_{a_{i}},L_{a_{i+1}})+e(L_{a_{p}},L_{a_{0}}\otimes \omega_{X}^{-1})-p=p+1 \textrm{ or }+\infty.
$$
Hence, the anticanonical pseudoheight $\phac(\mathbb{A}_{a})=2$ and thus the pseudoheight $$
\ph(\mathbb{A}_{a})=\phac(\mathbb{A}_{a})+\dim X=4.
$$
In particular, the height $h(\mathbb{A}_{a})\geq \ph(\mathbb{A}_{a})=4$.
Thus, the normal Hochschild
cohomology $\NHH^{k}(\langle \mathbb{A}_{a} \rangle,X)=0$ for $k\leq 3$.

By Theorem \ref{thm:nhh_triang}, 
there is a long exact sequence
$$
\xymatrix@C=0.3cm{
\cdots \ar[r] & \NHH^{k}(\langle \mathbb{A}_{a} \rangle,X) \ar[r] & \HH^{k}(X) \ar[r] &  \HH^{k}(\RA_{X}^{(a)})\ar[r] & \NHH^{k+1}(\langle \mathbb{A}_{a} \rangle,X) \ar[r] & \cdots.
}
$$
Since $\HH^{0}(X)=\CN$, $\HH^{1}(X)=0$, $\HH^{2}(X)=\CN^{12}$ and $\HH^{k}(X)=0$ for $k\geq 3$,
we obtain $\HH^{2}(\RA_{X}^{(a)})\cong \HH^{2}(X)=\CN^{12}$ and 
$$
\HH^{i}(\RA_{X}^{(a)})\cong \NHH^{i+1}(\langle \mathbb{A}_{a} \rangle,X)
$$ 
for $i\geq 3$.

Next, we will determine the normal Hochschild
cohomology $\NHH^{j}(\langle \mathbb{A}_{a} \rangle,X)$ for $j\geq 4$.
By Proposition \ref{NHH--spectral-prop}, we have Kuznetsov's spectral sequence
\begin{equation}\label{NHH-spectral-seq-10pits-1}
E_{1}^{-p,q} \Rightarrow \NHH^{q-p}(\langle \mathbb{A}_{a} \rangle,X),
\end{equation}
where $E_{1}^{-p,q}$ is defined by 
\begin{equation*}
\bigoplus_{\substack{1 \leq a_{0} < \cdots < a_{p} \leq 13 \\ k_{0} + \cdots + k_{p} = q}} \Ext^{k_{0}}(L_{a_{0}}, L_{a_{1}}) \otimes \cdots \otimes 
\Ext^{k_{p-1}}(L_{a_{p-1}}, L_{a_{p}}) \otimes 
 \Ext^{k_{p}}(L_{a_{p}}, \mathcal{S}^{-1}(L_{a_{0}})),
\end{equation*}
where $\mathcal{S}(-):=-\otimes \omega_{X}[2]$ is the Serre functor of $\DC(X)$.
By Proposition \ref{10pts-ext2-main}, we derive that the terms $\Ext^{k_{i}}(L_{a_{i}},L_{a_{i+1}})$ have non-trivial values only for $k_{i}=2$.
According to Proposition \ref{10pts-ext1-main}, we know that 
$$
\RHom(L_{a_{p}},\mathcal{S}^{-1}(L_{a_{0}}))=\CN^{-\chi(\mathcal{S}^{-1}(L_{a_{0}})\otimes L_{a_{p}}^{-1})}[-3].
$$
Hence, $E_{1}^{-p,q}$ is nontrivial only if $2p+3=q$. 
Consequently, the spectral sequence \eqref{NHH-spectral-seq-10pits-1} degenerates on the $E_{1}$-page.
Moreover, if $q-p=j\geq 4$ and $2p+3=q$, then $p=j-3$ and $q=2j-3$.
This yields that
$$
\NHH^{j}(\langle \mathbb{A}_{a} \rangle,X) \cong E_{1}^{-j+3,2j-3},
$$
where $j\geq 4$.
Based on Proposition \ref{10pts-ext2-main} and Proposition \ref{10pts-ext1-main}, by a straightforward computation, we have 
\begin{align*}
E_{1}^{-1,5}&=\CN^{88a^{2}+220a+206},& E_{1}^{-2,7}&=\CN^{248a^{2}+620a+373}, \\
E_{1}^{-3,9}&=\CN^{232a^{2}+580a+180}, &
E_{1}^{-4,11}&=\CN^{72a^{2}+180a},
\end{align*}
and $E_{1}^{-j+3,2j-3}=0$ for $j\geq 7$.
This concludes the proof.
\end{proof}

\begin{rem}\label{no-section-anticanonical}
For any $i\geq 0$, we have $\Ext^{i}(\CO_{X},\CO_{X}(-K_{X}))=0$. 
In particular, the relative height $e(\CO_{X},\CO_{X}(-K_{X}))=+\infty$.
In fact, for any $i\geq 0$, $\ext^{i}(\CO_{X},\CO_{X}(-K_{X}))= h^{i}(-K_{X})$. 
Note that by Riemann--Roch theorem, $\chi(-K_{X})=0$. 
Since $K_{X}\ldotp H=-3<0$, by Serre duality, we get $h^{2}(-K_{X})=h^{0}(2K_{X})=0$. 
Thus, it remains to show that $h^{0}(-K_{X})=0$. 
This is equivalent to the non-existence of a cubic curve passing through the ten general points, which holds by the generality assumption on the points.
\end{rem}

Now we are in the position to finish the proof of Theorem \ref{distinct-phantoms-thm}.

\begin{proof}[Proof of Theorem \ref{distinct-phantoms-thm}]
By \cite[Remark 5.5]{Kra24}, the third Hochschild cohomology of Krah's phantom is of dimension $446$.
Note that the third Hochschild cohomology of the phantom in \cite[Theorem 1.2]{KKL+26} is of dimension $998$, and the third Hochschild cohomology of the phantom in \cite[Theorem 3.7]{KKL+26} is of dimension $446$; see Remark \ref{pf-thm1.2-rem}.
Moreover, 
for the phantom in \cite[Theorem 1.1]{KKL+26} and \cite[Theorem 1.1]{MXY25}, the second Hochschild cohomology group has dimension $\geq 14$ (\cite[Proposition 3.5]{KKL+26}); in fact, it is of dimension $27$ (\cite[Proposition A.2]{MXY25}).
As a result, Proposition \ref{HH-A-phantom} concludes the proof.
\end{proof}
  
\begin{rem}\label{pf-thm1.2-rem}
With computer-assisted computations in Macaulay2 as explained in \cite[Appendix B]{KKL+26}, 
by direct computations as in the proof of Proposition \ref{HH-A-phantom}, we obtain that the third Hochschild cohomology of the phantoms in \cite[Theorem 1.3]{KKL+26} and \cite[Theorem 3.7]{KKL+26} have dimensions $998$ and $446$, respectively.
In fact, the Hochschild cohomology groups of the phantom in \cite[Theorem 3.7]{KKL+26} have the same dimensions as that of Krah's phantom. 
At present, it is unknown  whether or not the phantom in \cite[Theorem 3.7]{KKL+26} is equivalent to Krah's phantom.
Besides, it is worth noting that Dumnicki's diagram-cutting method holds for the blow-up of $\mathbf{F}_{2}$ at $9$ points in general position.
Hence, Dumnicki's diagram-cutting method can also be applied to study both the case of the blow-up of $\mathbf{F}_{2}$ at nine points and the case of the blow-up of $\mathbb{P}^{2}$ at ten points.
\end{rem}

%===========================================================================

\section{Applications}\label{appl-t-struture}

In this paper, we prove that all phantom categories constructed in Theorem \ref{mainthm} admit bounded $t$-structures.
We also construct some explicit objects in the corresponding hearts.  

\subsection{Existence of bounded $t$-structures}

We begin by recalling the definition of a bounded $t$-structure.

\begin{defn}
Let $\mathcal{D}$ be a triangulated category. A {\it $t$-structure} on $\mathcal{D}$ is a pair of full subcategories
$\tau=(\mathcal{D}^{\leq 0},\mathcal{D}^{\geq 0})$
such that the following conditions hold:
\begin{enumerate}
\item[(i)] $\mathcal{D}^{\leq 0}[1]\subseteq \mathcal{D}^{\leq 1}$ and $\mathcal{D}^{\geq 0}[-1]\subseteq \mathcal{D}^{\geq -1}$;
\item[(ii)] For every $X\in \mathcal{D}^{\leq 0}$ and $Y\in \mathcal{D}^{\geq 1}$, $\Hom(X,Y)=0$,
\item[(iii)] For every object $E\in \mathcal{D}$, there is an exact triangle
$$
\xymatrix@C=0.5cm{
A \ar[r]& E \ar[r] & B \ar[r]& A[1]
}
$$
with
$ A\in \mathcal{D}^{\leq 0}$ and
$B\in \mathcal{D}^{\geq 1}$,
\end{enumerate}
where $\mathcal{D}^{\leq n}:=\mathcal{D}^{\leq 0}[-n]$ and  $\mathcal{D}^{\geq n}:=\mathcal{D}^{\geq 0}[-n]$.
In addition, the subcategories $\mathcal{D}^{\leq 0}$ and $\mathcal{D}^{\geq 0}$ of $\mathcal{D}$ are called the {\it connective} and {\it coconnective} parts of the
$t$-structure $\tau$.
\end{defn} 

Let $\tau=(\mathcal{D}^{\leq 0},\mathcal{D}^{\geq 0})$ be a $t$-structure on $\mathcal{D}$. 
Then, the {\it heart} of the $t$-structure $\tau$ is defined to be the full subcategory
$$
{}^{\tau}\mathcal{D}^{\heartsuit}:=\mathcal{D}^{\leq 0}\cap \mathcal{D}^{\geq 0} \subset \mathcal{D}.
$$
It is an abelian category.
Moreover, the $t$-structure $\tau$ is called {\it bounded} if
$$
\mathcal{D}
=
\bigcup_{n\in\mathbb{Z}}\mathcal{D}^{\leq n}
=
\bigcup_{n\in\mathbb{Z}}\mathcal{D}^{\geq n}.
$$
Equivalently, for every object $E\in \mathcal{D}$, there exist integers
$a\leq b$ such that $E\in \mathcal{D}^{\geq a}\cap \mathcal{D}^{\leq b}$.
A typical example is the derived category of an abelian category. 
For example, let $X$ be a smooth proper variety. 
Then, the derived category $\DC(X)$ has a standard bounded $t$-structure $\tau_{X}$:
$$
\mathcal{D}^{\leq 0}:=\{E\in \DC(X) \mid \mathcal{H}^{i}(E)=0 \textrm{ for }\, i>0 \}
$$
$$
\mathcal{D}^{\geq 0}:=\{E\in \DC(X) \mid \mathcal{H}^{i}(E)=0 \textrm{ for }\, i<0 \}
$$
Its heart ${}^{\tau_{X}}\DC(X)^{\heartsuit}$ is naturally identified with the abelian category ${\rm Coh}(X)$ of coherent sheaves. 
However, for the semi-orthogonal components of $\DC(X)$, the following question is still widely open.

\begin{quest}[{\cite[Question 1.1]{KLP26}}]
Let $X$ be a smooth proper variety.   
If $\mathscr{C}\subset \DC(X)$ is an admissible subcategory, then does $\mathscr{C}$ admit a bounded $t$-structure?
\end{quest}

In order to construct bounded $t$-structures on admissible subcategories, 
Kuznetsov--Liu--Perry \cite{KLP26} introduced the notion of  connectively induced $t$-structure.
Let us recall its definition:

\begin{defn}[{\cite[Definition 5.1 (a)]{KLP26}}]
Let $\mathcal{D}$ be a triangulated category with a $t$-structure $\tau = (\mathcal{D}^{\leq 0}, \mathcal{D}^{\geq 0})$ on $\mathcal{D}$.
Suppose $\mathscr{C}\subset \mathcal{D}$ is a full triangulated subcategory. 
We say that $\tau$ {\it connectively induces a $t$-structure} on $\mathscr{C}$ if the pair $\tau^{-}_{\mathscr{C}}=(\mathscr{C}^{\leq 0}, \mathscr{C}^{\geq 0})$ with
\begin{align*}
 \mathscr{C}^{\leq 0}& := {}^{\tau}\mathcal{D}^{\leq 0} \cap \mathscr{C} \\
 \mathscr{C}^{\geq 0}&:=\{E\in\mathscr{C} \mid \Hom(F, E)=0 \textrm{ for all }\; F\in \mathscr{C}^{\leq 0}[1] \}
\end{align*}
defines a $t$-structure on $\mathscr{C}$. We also say that $\tau^{-}_{\mathscr{C}}$ is the {\it connectively induced  $t$-structure} on $\mathscr{C}$ and we denote by ${}^{\tau_{\mathscr{C}}^{-}}\mathscr{C}^{\heartsuit}$ the heart of $t$-structure $\tau^{-}_{\mathscr{C}}$.
\end{defn}

This paper is mainly interested in the admissible subcategory orthogonal to a given exceptional collection. 
Recently, Kuznetsov--Liu--Perry \cite{KLP26} established the following theorem, which induces a $t$-structure on the orthogonal complement of an exceptional collection from a $t$-structure on the ambient triangulated category:  

\begin{thm}[{\cite[Theorem 1.7]{KLP26}}]\label{KLP-thm}
Let $\mathcal{D}$ be a $\CN$-linear triangulated category and $\tau = (\mathcal{D}^{\leq 0}, \mathcal{D}^{\geq 0})$ a noetherian $t$-structure on $\mathcal{D}$.
Suppose there is a semi-orthogonal decomposition
$$
\mathcal{D}=\langle L_{1}, L_{2}, \cdots, L_{m}, \mathscr{C} \rangle
$$
where $L_{1},L_{2}, \cdots, L_{m}\in {}^{\tau}\mathcal{D}^{\heartsuit}$ is an exceptional collection such that $\Hom(L_{i}, L_{j})=0$
for all $1\leq i<j\leq m$.
Then $\tau$ connectively induces a $t$-structure $\tau^{-1}_\mathscr{C}$ on $\mathscr{C}$, which is bounded if $\tau$ is bounded.
\end{thm}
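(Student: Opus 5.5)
The plan is to argue by induction on $m$, so that the whole content reduces to the case of a single exceptional object. Write $\mathcal{D}_{1}:=\langle L_{2},\cdots,L_{m},\mathscr{C}\rangle={}^{\perp}L_{1}$, so that $\mathcal{D}=\langle L_{1},\mathcal{D}_{1}\rangle$.

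\textbf{Step 1 (one exceptional object).} Suppose $\mathcal{D}=\langle L,\mathscr{C}\rangle$ with $L\in{}^{\tau}\mathcal{D}^{\heartsuit}$ exceptional and $\mathscr{C}={}^{\perp}L$. I would prove that $\tau$ connectively induces a $t$-structure on $\mathscr{C}$. The candidate aisle $\mathscr{C}^{\leq 0}={}^{\tau}\mathcal{D}^{\leq 0}\cap\mathscr{C}$ is clearly closed under extensions and under $[1]$. So the only issue is to construct, for each $E\in\mathscr{C}$, a triangle $A\to E\to B$ with $A\in\mathscr{C}^{\leq 0}$ and $B\in(\mathscr{C}^{\leq 0}[1])^{\perp}\cap\mathscr{C}$.

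I would start from $A_{0}:=\tau_{\leq 0}E$ and apply the right adjoint of $\mathscr{C}\hookrightarrow\mathcal{D}$, which is given by a triangle $\imath\imath^{!}A_{0}\to A_{0}\to L\otimes V$ with $V$ a graded vector space. Since $L$ lies in the heart, $\Ext^{<0}(A_{0},L)=0$. Hence this projection leaves $\mathcal{D}^{\leq 0}$ only by a defect concentrated in one degree, namely copies of $L[-1]$ measured by $\Hom(A_{0},L)$. I would then correct this iteratively: replace $A_{0}$ by the fibre of $A_{0}\to L\otimes\Hom(A_{0},L)^{*}$ (or its $\tau_{\leq 0}$ truncation), producing a chain of subobjects/quotients in the heart.

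\textbf{Step 2 (termination).} The noetherian hypothesis is exactly what should make this process stabilize. Each correction step produces a strictly increasing chain of subobjects of the noetherian object $\mathcal{H}^{0}_{\tau}(E)$. Once the chain is stationary, the resulting $A$ lies in $\mathscr{C}^{\leq 0}$, and its cone $B$ has $\Hom(\mathscr{C}^{\leq 0}[1],B)=0$. I expect this step to be the main obstacle. Concretely, one must identify the right chain in the heart, show it is strictly increasing until the construction terminates, and verify orthogonality of the limit. I would first try the description $\mathscr{C}^{\leq 0}=\{E\in\mathscr{C}:\mathcal{H}^{i}_{\tau}(E)=0\text{ for } i>0\}$ and work with maximal subobjects of $\mathcal{H}^{0}_{\tau}(E)$ that lift to $\mathscr{C}$.

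\textbf{Step 3 (properties of the induced $t$-structure).} Boundedness should follow directly. If $E\in\mathcal{D}^{[a,b]}$, then $E\in\mathscr{C}^{\leq b}$ by definition. Moreover, the $\mathscr{C}$-truncations differ from the $\tau$-truncations only by copies of $L$ in a bounded range of degrees, so $E\in\mathscr{C}^{\geq a-1}$. I also need the induced $t$-structure to be noetherian again, so that the induction can continue. For this I would show that the heart of $\tau_{\mathscr{C}}^{-}$ embeds into an abelian category built from ${}^{\tau}\mathcal{D}^{\heartsuit}$ with finite-length correction terms, so that ascending chains still stabilize.

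\textbf{Step 4 (induction).} Apply Step 1 to $\mathcal{D}=\langle L_{1},\mathcal{D}_{1}\rangle$ to get a bounded noetherian $t$-structure $\tau_{1}$ on $\mathcal{D}_{1}$. For $j\geq 2$, the hypothesis $\Hom(L_{1},L_{j})=0$, together with exceptionality, gives $L_{j}\in{}^{\perp}L_{1}=\mathcal{D}_{1}$. Next, $L_{j}\in\mathcal{D}^{\leq 0}\cap\mathcal{D}_{1}=\mathcal{D}_{1}^{\leq 0}$. Also $L_{j}\in\mathcal{D}_{1}^{\geq 0}$, because $\mathcal{D}_{1}^{\leq 0}[1]\subset\mathcal{D}^{\leq -1}$ and $L_{j}\in\mathcal{D}^{\geq 0}$. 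Hence $L_{2},\cdots,L_{m}$ lie in the heart of $\tau_{1}$ and still satisfy $\Hom(L_{i},L_{j})=0$ for $i<j$. By induction, $\tau_{1}$ connectively induces a bounded $t$-structure on $\mathscr{C}$. Finally, I would check that connective inducing is transitive: $\mathcal{D}_{1}^{\leq 0}\cap\mathscr{C}=\mathcal{D}^{\leq 0}\cap\mathscr{C}$. So the result is precisely $\tau^{-}_{\mathscr{C}}$.
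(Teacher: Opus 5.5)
The paper does not prove this statement; it quotes it from \cite{KLP26}. So your outline has to stand on its own, and it has a genuine gap: the induction in Step~4 runs on an invariant that is not preserved. To re-apply Step~1 to $(\mathcal{D}_{1},\tau_{1})$ you need $\tau_{1}$ to be noetherian. But the one-step claim of Step~3, that the connectively induced $t$-structure on ${}^{\perp}L$ is again noetherian, is false in general, and the paper itself shows it.

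\begin{itemize}
\item Take $\DC(X)=\langle L_{1},\cdots,L_{13},\mathscr{C}_{X}\rangle$ as in Theorem~\ref{bd-t-str-ourphant}. The $L_{i}$ are line bundles, so they lie in $\mathrm{Coh}(X)$, and their forward Homs vanish by Lemma~\ref{10pts-forward-hom-zero-main}.
\item Your Step~4 puts each $L_{k+1}$ in the heart of $\tau_{k}$ on $\mathcal{D}_{k}=\langle L_{k+1},\cdots,L_{13},\mathscr{C}_{X}\rangle$.
\item Your transitivity remark identifies the final $t$-structure with $(\tau_{X})^{-}_{\mathscr{C}_{X}}$.
\end{itemize}
So if Steps~1 and~3 held as general one-step statements, iterating them thirteen times would make $(\tau_{X})^{-}_{\mathscr{C}_{X}}$ a noetherian bounded $t$-structure on a geometric phantom. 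That contradicts \cite{Liu25}, as recorded in the Remark after Theorem~\ref{bd-t-str-ourphant}. Hence no argument along the lines sketched in Step~3 can succeed, and the inductive hypothesis is not available on $\mathcal{D}_{1}$.

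A symptom of what is missing: the hypothesis $\Hom(L_{i},L_{j})=0$ for $i<j$ is never used. In Step~4, $L_{j}\in{}^{\perp}L_{1}$ needs only exceptionality, and $L_{j}\in$ heart of $\tau_{1}$ needs only $L_{j}\in{}^{\tau}\mathcal{D}^{\heartsuit}$. This hypothesis is what lets one treat all the $L_{i}$ simultaneously inside the original noetherian heart, where ACC is actually available. For $T\in\mathcal{D}^{\leq 0}$, set $T^{(k)}:={\rm R}_{L_{k}}\cdots{\rm R}_{L_{1}}T$. One checks inductively that $T^{(k)}\in\mathcal{D}^{\leq 1}$, with $\mathcal{H}^{1}_{\tau}(T^{(k)})$ an iterated extension of quotients of sums of $L_{1},\cdots,L_{k}$. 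Hence $\Hom(T^{(k)},L_{k+1}[-1])=\Hom(\mathcal{H}^{1}_{\tau}(T^{(k)}),L_{k+1})=0$ \emph{precisely because} of forward vanishing. So the projection of $\mathcal{D}^{\leq 0}$ to $\mathscr{C}$ lands in $\mathcal{D}^{\leq 1}$ with a defect of controlled shape. Without the hypothesis, the defect can spread into degrees up to $m$. Your induction discards exactly this information.

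Step~2 also does not work as described. The correction $A\mapsto\tau_{\leq 0}\,\mathrm{fib}(A\to L\otimes\Hom(A,L)^{*})$ replaces $\mathcal{H}^{0}_{\tau}(A)$ by the kernel of $\mathcal{H}^{0}_{\tau}(A)\to L\otimes\Hom(A,L)^{*}$. So it produces a \emph{descending} chain in $\mathcal{H}^{0}_{\tau}(E)$, about which ACC says nothing. Moreover, it only kills $\Hom(-,L)$, whereas membership in ${}^{\perp}L$ also needs $\Ext^{k}(-,L)=0$ for all $k\geq 1$. The actual projection ${\rm R}_{L}$ alters every $\mathcal{H}^{j}_{\tau}$ with $j\leq 0$; for example, $\mathcal{H}^{0}_{\tau}$ picks up a quotient of $L\otimes\Ext^{1}(A,L)^{*}$. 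So the existence of truncations, which is the real content of the theorem, remains unproved.

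Two smaller slips:
\begin{itemize}
\item The triangle needs $B\in\mathscr{C}^{\geq 1}$, i.e.\ $\Hom(\mathscr{C}^{\leq 0},B)=0$, not $\Hom(\mathscr{C}^{\leq 0}[1],B)=0$.
\item Boundedness is immediate, since $\mathcal{D}^{\geq a}\cap\mathscr{C}\subseteq\mathscr{C}^{\geq a}$ directly from the definition.
\end{itemize}
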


This theorem enables us to establish the existence of bounded $t$-structures on our phantom categories.

\begin{thm}[Theorem \ref{phantom-bounded-t-structure}]\label{bd-t-str-ourphant}
Let $\mathscr{A}_{X}^{(a)}$ be a phantom category in Theorem \ref{mainthm}, where $a\geq 0$.
Then $\mathscr{A}_{X}^{(a)}$ has a bounded $t$-structure. 
\end{thm}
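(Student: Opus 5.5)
We plan to apply Theorem \ref{KLP-thm} with $\mathcal{D}=\DC(X)$ and $\tau=\tau_{X}$ the standard bounded $t$-structure, whose heart is $\mathrm{Coh}(X)$. This $t$-structure is noetherian, because $\mathrm{Coh}(X)$ is a noetherian abelian category ($X$ is a noetherian scheme, so coherent sheaves satisfy the ascending chain condition on subobjects). The one thing still needed is a semi-orthogonal decomposition of the exact shape required in Theorem \ref{KLP-thm}, namely $\DC(X)=\langle L_{1},\dots,L_{m},\mathscr{C}\rangle$ with the exceptional objects on the left.

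Theorem \ref{mainthm} defines $\RA_{X}^{(a)}$ as the right orthogonal complement of $\langle\mathbb{A}_{a}\rangle$, which gives $\DC(X)=\langle \RA_{X}^{(a)},\mathbb{A}_{a}\rangle$, with the phantom on the left. We would move it to the right using the Serre functor $\mathcal{S}=-\otimes\omega_{X}[2]$. For any admissible $\mathcal{B}$ one has ${}^{\perp}\mathcal{B}=\mathcal{S}^{-1}(\mathcal{B}^{\perp})$ and $\mathcal{S}(\langle\mathcal{A},\mathcal{B}\rangle)=\langle\mathcal{S}\mathcal{A},\mathcal{S}\mathcal{B}\rangle$. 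Hence
$$
\DC(X)=\langle \mathcal{S}^{-1}(L_{1}),\dots,\mathcal{S}^{-1}(L_{13}),\RA_{X}^{(a)}\rangle ,
$$
and after dropping shifts, which do not change the subcategory generated,
$$
\DC(X)=\langle L_{1}\otimes\omega_{X}^{-1},\dots,L_{13}\otimes\omega_{X}^{-1},\RA_{X}^{(a)}\rangle .
$$
The objects $L_{i}\otimes\omega_{X}^{-1}$ are line bundles, so they lie in the heart $\mathrm{Coh}(X)$. They form an exceptional collection because twisting by a line bundle is an autoequivalence.

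The hypothesis of Theorem \ref{KLP-thm} to check is $\Hom(L_{i}\otimes\omega_{X}^{-1},L_{j}\otimes\omega_{X}^{-1})=\Hom(L_{i},L_{j})=0$ for all $i<j$, and this is exactly Lemma \ref{10pts-forward-hom-zero-main}. Theorem \ref{KLP-thm} then says that $\tau_{X}$ connectively induces a $t$-structure on $\RA_{X}^{(a)}$, and this $t$-structure is bounded since $\tau_{X}$ is bounded.

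The only step needing care is the reordering of the semi-orthogonal decomposition: one must confirm that applying $\mathcal{S}^{-1}$ carries $\RA_{X}^{(a)}$ onto itself. It does, because the decomposition $\langle \mathcal{S}^{-1}\langle\mathbb{A}_{a}\rangle,\RA_{X}^{(a)}\rangle$ follows from the standard identity ${}^{\perp}(\mathcal{A}^{\perp})$-rotation, namely $\langle\mathcal{A},\mathcal{B}\rangle=\langle\mathcal{S}^{-1}\mathcal{B},\mathcal{A}\rangle$. Applied here, this means $\RA_{X}^{(a)}$ stays fixed and only the exceptional part is twisted. As a fallback, one could instead take the left orthogonal ${}^{\perp}\langle\mathbb{A}_{a}\rangle$, which is equivalent to $\RA_{X}^{(a)}$ via $\mathcal{S}$, and argue on it.
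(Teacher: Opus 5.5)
Your reordering step fails, and it is exactly the step you flagged. The rotation identity has the Serre functor in the wrong direction. For $\DC(X)=\langle\mathcal{A},\mathcal{B}\rangle$ one has $\Hom(\mathcal{A},\mathcal{S}\mathcal{B})\cong\Hom(\mathcal{B},\mathcal{A})^{\ast}=0$, so the correct statement is $\langle\mathcal{A},\mathcal{B}\rangle=\langle\mathcal{S}\mathcal{B},\mathcal{A}\rangle$, not $\langle\mathcal{S}^{-1}\mathcal{B},\mathcal{A}\rangle$. Already on $\PB^{1}$ your version would turn $\langle\CO,\CO(1)\rangle$ into $\langle\CO(3),\CO\rangle$, which is not semi-orthogonal since $\RHom(\CO,\CO(3))\neq 0$. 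In your situation, the decomposition you display needs $\Hom(E,\mathcal{S}^{-1}L_i[k])=0$ for all $E\in\RA_X^{(a)}$. Serre duality gives $\Hom(E,\mathcal{S}^{-1}L_i[k])\cong\Hom(L_i,\mathcal{S}^{2}E[-k])^{\ast}$. Hence $\langle L_1\otimes\omega_X^{-1},\dots,L_{13}\otimes\omega_X^{-1},\RA_X^{(a)}\rangle$ is a semi-orthogonal decomposition exactly when $\RA_X^{(a)}$ is stable under $-\otimes\omega_X^{2}$, and nothing in the setup provides that. Your claim that $\mathcal{S}^{-1}$ carries $\RA_X^{(a)}$ onto itself is likewise unfounded: $\mathcal{S}^{-1}$ carries $\RA_X^{(a)}=\langle\mathbb{A}_a\rangle^{\perp}$ onto ${}^{\perp}\langle\mathbb{A}_a\rangle$.

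The repair is to twist by $\omega_X$ instead. For $E\in\RA_X^{(a)}$ one has $\Hom(E,L_i\otimes\omega_X[2+k])\cong\Hom(L_i,E[-k])^{\ast}=0$, so $\DC(X)=\langle L_1\otimes\omega_X,\dots,L_{13}\otimes\omega_X,\RA_X^{(a)}\rangle$. These objects are line bundles, hence in $\mathrm{Coh}(X)$. By Lemma \ref{10pts-forward-hom-zero-main}, $\Hom(L_i\otimes\omega_X,L_j\otimes\omega_X)=\Hom(L_i,L_j)=0$ for $i<j$. The rest of your argument (noetherianity and boundedness of $\tau_X$, then Theorem \ref{KLP-thm}) goes through unchanged.

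Corrected this way, your proof is the paper's proof conjugated by the autoequivalence $-\otimes\omega_X$. The paper applies Theorem \ref{KLP-thm} to $\DC(X)=\langle L_1,\dots,L_{13},\mathscr{C}_X\rangle$ with $\mathscr{C}_X={}^{\perp}\langle\mathbb{A}_a\rangle$. It then transports the resulting $t$-structure along the equivalence $-\otimes\omega_X\colon\mathscr{C}_X\to\RA_X^{(a)}$, which is precisely your fallback. Since $-\otimes\omega_X$ preserves $\mathrm{Coh}(X)$, both routes give the same $t$-structure on $\RA_X^{(a)}$.
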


\begin{proof}
Let $X$ be the blow-up of $\PB^{2}$ at $10$ points in general position.
Note that in Theorem \ref{mainthm}, for each $a\geq 0$, the phantom $\mathscr{A}_{X}^{(a)}$ is right orthogonal to a non-full exceptional collection \eqref{EC-sequ-10pts-main} of line bundles of length $13$.
For simplicity, let $\mathscr{C}_{X}$ be the left orthogonal complement of the exceptional collection \eqref{EC-sequ-10pts-main}.
Then, there is an equivalence 
\begin{equation}\label{equiv-two-phs}
-\otimes \omega_{X}: \mathscr{C}_{X}\rightarrow \mathscr{A}_{X}^{(a)}.
\end{equation}
It is known that a bounded $t$-structure on $\mathscr{C}_{X}$ induces a bounded $t$-structure on $\mathscr{A}_{X}^{(a)}$ via the equivalence \eqref{equiv-two-phs}.
Consequently, it suffices to establish a bounded $t$-structure on $\mathscr{C}_{X}$.
Now consider the standard bounded $t$-structure $\tau_{X}$ on $\DC(X)$. By Theorem \ref{KLP-thm}, to show that $\tau_{X}$ connectively induces a bounded $t$-structure
$(\tau_{X})^{-}_{\mathscr{C}_{X}}$ on $\mathscr{C}_{X}$, it is sufficient to show  that all forward $\Hom$-spaces of the exceptional collection are trivial. In fact, the required forward vanishings follow from Lemma \ref{10pts-forward-hom-zero-main}.
\end{proof}

\begin{rem}
In \cite[Theorem 2.3.3]{Liu25}, Liu showed that every geometric phantom category does not admit noetherian bounded $t$-structures. 
As a result, the bounded $t$-structures constructed in Theorem \ref{bd-t-str-ourphant} are not noetherian.   
\end{rem}

On the other hand, the examples of co-connective DG algebras whose derived categories are phantom categories were first constructed by Mattoo \cite{Mat25} and subsequently by the authors \cite{MXY25}, thereby answering a question of Ben Antieau. 
Recently, using the existence of a bounded $t$-structure, Kuznetsov--Liu--Perry \cite[Corollary 6.14]{KLP26} provided more examples answering the same question.
We give furthermore examples as follows.

\begin{cor}
Let $\mathscr{A}_{X}^{(a)}$ be a phantom category in Theorem \ref{mainthm}, where $a\geq 0$.
Then there
exists a classical generator $T\in  \mathscr{A}_{X}^{(a)}$ such 
that $\Ext^{i}(T,T) =0$ for $i<0$. In particular, $\RHom(T,T)$ is a co-connective DG algebra
such that its derived category is a phantom category.  
\end{cor}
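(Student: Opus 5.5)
The plan is to build the generator from the bounded $t$-structure of Theorem \ref{bd-t-str-ourphant}. A classical generator concentrated in non-positive cohomological degrees (i.e.\ lying in the connective part $\mathscr{C}^{\leq 0}$) does not by itself give vanishing negative self-Exts. What we need is a generator lying in the \emph{heart}. For two objects $F,G$ of the heart one has $\Hom(F,G[i])=0$ for $i<0$: indeed $G[i]\in\mathscr{C}^{\geq -i}\subset\mathscr{C}^{\geq 1}$ when $i<0$, while $F\in\mathscr{C}^{\leq 0}$, so axiom (ii) of a $t$-structure applies. Hence, if $T$ is a finite direct sum of heart objects that classically generates, then $\Ext^{i}(T,T)=0$ for $i<0$. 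Consequently $\RHom(T,T)$ is co-connective, and $\mathscr{A}_{X}^{(a)}\simeq\mathrm{Perf}(\RHom(T,T))$ is a phantom by Theorem \ref{mainthm}.

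The key steps are as follows.

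\textbf{Step 1: a generator exists.} Since $\mathscr{A}_{X}^{(a)}$ is an admissible subcategory of $\DC(X)$, it has a classical generator $G$. Concretely, take $G$ to be the projection of a classical generator of $\DC(X)$ under the adjoint of the inclusion; for example, project $\bigoplus_{k=0}^{2}\CO_X(kH')$ for an ample $H'$, or project $\CO_X$ plus a few ample powers.

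\textbf{Step 2: replace $G$ by its cohomology objects.} Use the bounded $t$-structure $\tau$ on $\mathscr{A}_{X}^{(a)}$ from Theorem \ref{bd-t-str-ourphant}, transported along $-\otimes\omega_X$ from $\mathscr{C}_X$. Because $\tau$ is bounded, $G$ has finitely many nonzero cohomology objects $\mathcal{H}^{j}_{\tau}(G)$ in the heart. The truncation triangles show that $G$ lies in the thick subcategory generated by these objects. So $T:=\bigoplus_j\mathcal{H}^j_\tau(G)$ is still a classical generator, and it lies in the heart.

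\textbf{Step 3: conclude.} The vanishing argument of the first paragraph gives $\Ext^{i}(T,T)=0$ for $i<0$. For the ``in particular'' statement, apply the standard Keller/Bondal--Van den Bergh equivalence $\mathscr{A}_{X}^{(a)}\simeq\mathrm{Perf}(\RHom(T,T))$, using the DG enhancement induced from $\DC(X)$.

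The only genuinely nontrivial input is the existence of the bounded $t$-structure, and that is supplied by Theorem \ref{bd-t-str-ourphant}. The main point to be careful about is Step 2, namely that boundedness is used essentially. Without it, the cohomology objects of $G$ might be infinite in number, and the argument would fail. The rest is formal and follows \cite[Corollary 6.14]{KLP26}.
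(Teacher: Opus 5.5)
Your proposal is correct and is essentially the paper's argument. The paper invokes the proof of \cite[Corollary 6.14]{KLP26}: the cohomology objects of a generator with respect to the bounded $t$-structure sum to a generator lying in the heart, which therefore has no negative self-Exts. It runs this on $\mathscr{C}_X$ and then transports the generator to $\mathscr{A}_X^{(a)}$ via $-\otimes\omega_X$, whereas you transport the $t$-structure first and spell the argument out. One small slip: in your illustrative choice of a generator of $\DC(X)$, $H'$ should be very ample for Orlov's result to apply. This does not affect the proof, since any classical generator of $\DC(X)$ suffices.
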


\begin{proof}
Based on Theorem \ref{bd-t-str-ourphant}, the proof follows that of \cite[Corollary 6.14]{KLP26}.
In fact, by the same proof as \cite[Corollary 6.14]{KLP26}, there is a classical generator $G\in \mathscr{C}_{X}$ such 
that $\Ext^{i}(G,G) =0$ for $i<0$.
Thus, by the equivalence \eqref{equiv-two-phs}, the object 
$$
T:=G\otimes \omega_{X}\in \mathscr{A}_{X}^{(a)}
$$
is a classical generator satisfying $\Ext^{i}(T,T) =0$ for $i<0$.
\end{proof}

\subsection{Explicit objects in the heart}

In this subsection, we describe explicit objects in the heart of the bounded $t$-structures induced on the phantom categories in Theorem \ref{mainthm}. 
We adopt the strategies in \cite[\S 5.1]{Mat25} and \cite[Example 6.17]{KLP26}. 

\begin{setup}\label{more-ex-setup}
Let $X$ be the blow-up of $\PB^{2}$ at $n$ points in general position, where $n=10$ or $n=11$.
Let 
$$
\{L_{1}, L_{2}, L_{3},\cdots,L_{n}, L_{n+1}, L_{n+2}, L_{n+3}\}
$$
be a non-full exceptional collection of line bundles on $X$, satisfying:
\begin{enumerate}
\item[(i)] $L_{n+2}=\CO_{X}(-M)$, where $M$ is an ample divisor;
\item[(ii)] $\Ext^{k}(L_{i},L_{j})=0$ for $k\neq 2$ and $1 \leq i<j\leq n+3$;
\item[(iii)]  $\Ext^{2}(L_{i},L_{j})=0$ for $3 \leq i<j\leq n$; moreover, in all remaining cases, $\Ext^{2}(L_{i},L_{j})\neq 0$;
\item[(iv)] $L_{n+2}\ldotp L_{i}<0$ for all $i\neq n+2$.
\end{enumerate}
\end{setup}

Then, there is a semi-orthogonal decomposition
\begin{align*}
\DC(X) &=\langle \mathscr{A}_{X}, L_{1}, L_{2}, L_{3}, \cdots, L_{n}, L_{n+1}, L_{n+2}, L_{n+3}\rangle \\
&=\langle L_{1}, L_{2}, L_{3}, \cdots, L_{n}, L_{n+1}, L_{n+2}, L_{n+3}, \mathscr{C}_{X} \rangle.
\end{align*}
Thus, the admissible subcategories  $\mathscr{A}_{X}$ and $\mathscr{C}_{X}$ are phantom categories. 
Under the condition (ii), by Theorem \ref{KLP-thm}, the standard bounded $t$-structure $\tau_{X}$ connectively induces a bounded $t$-structure $(\tau_{X})^{-}_{\mathscr{C}_{X}}$ on $\mathscr{C}_{X}$.
Similar to Theorem \ref{bd-t-str-ourphant}, 
the bounded $t$-structure $(\tau_{X})^{-}_{\mathscr{C}_{X}}$ induces a bounded $t$-structure on $\mathscr{A}_{X}$ under the equivalence $-\otimes \omega_{X}: \mathscr{C}_{X}\rightarrow \mathscr{A}_{X}$.

In the Set-up \ref{more-ex-setup}, 
since the divisor $M$ is ample, for $m \gg 0$, there is a smooth curve $C\in |mM|$ with the genus $g$. 
Let $\jmath: C\hookrightarrow X$ be the inclusion and let $L\in \Pic^{g-1}(X)$ be
generic. 
Recall that a generic line bundle on $C$ of degree $d<g$ has no non-trivial global sections. 
In fact, the image of the Abel--Jacobi map
 $$
 \begin{array}{cccl}
 & {\rm Sym}^{d}(C)  & \longrightarrow  & \mathrm{Pic}^{d}(C)  \\
&(p_{1}+\cdots+p_{d}) &\longmapsto &
 \CO_{C}(p_{1}+\cdots+p_{d}),
\end{array}
$$
consists precisely of the effective line bundles of degree $d$.
Since the dimension
$$
\dim {\rm Sym}^{d}(C) =d<g=\dim \mathrm{Pic}^{d}(C),
$$
so the image of the Abel--Jacobi map is a proper closed subvariety. 
Therefore, a generic line bundle of degree $d<g$ has no non-trivial global sections.

Similar to \cite[Lemma 5.9]{Mat25}, we have:
\begin{lem}
Let $\mathcal{L}:=\jmath_{\ast}L$.
Then, for any $1 \leq i\leq n+2$,   $\RHom(L_{i}, \mathcal{L})$ is concentrated in degree $1$. 
\end{lem}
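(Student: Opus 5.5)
We need $\RHom(L_i,\mathcal{L})$ to be concentrated in degree $1$ for $1\le i\le n+2$. Since $L_i$ is a line bundle, adjunction for the closed embedding $\jmath$ gives
$$
\RHom(L_{i},\jmath_{\ast}L)\cong \RHom_{C}(\jmath^{\ast}L_{i},L)\cong R\Gamma\big(C,\, L\otimes \jmath^{\ast}L_{i}^{-1}\big).
$$
The group $H^{k}$ vanishes automatically for $k\ge 2$ because $C$ is a curve. So it suffices to show
$$
H^{0}\big(C, L\otimes \jmath^{\ast}L_{i}^{-1}\big)=0 \quad\text{for each } 1\le i\le n+2.
$$

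The plan is to combine a degree computation with genericity of $L$.

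\textbf{Degree computation.} Write $N_i:=L\otimes\jmath^{\ast}L_i^{-1}$. Its degree is
$$
\deg N_i = g-1-(L_i\ldotp C)=g-1-m\,(L_i\ldotp M),
$$
since $C\in|mM|$. Condition (iv) of Set-up~\ref{more-ex-setup} says $L_{n+2}\ldotp L_i<0$, i.e.\ $(-M)\ldotp L_i<0$, so $L_i\ldotp M>0$ for all $i\neq n+2$. Hence $\deg N_i<g-1<g$ for these $i$. For $i=n+2$ we have $L_{n+2}=\CO_X(-M)$, so
$$
\deg N_{n+2}=g-1+mM^{2},
$$
which exceeds $g$. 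For this index a different argument is needed; see the last step.

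\textbf{Genericity for $i\neq n+2$.} The map $L\mapsto L\otimes\jmath^{\ast}L_i^{-1}$ is an isomorphism of varieties $\Pic^{g-1}(C)\to\Pic^{d_i}(C)$ with $d_i<g$. By the Abel--Jacobi argument recalled before the statement, the effective locus in $\Pic^{d_i}(C)$ is a proper closed subset; if $d_i<0$ it is empty. Its preimage is therefore a proper closed subset of $\Pic^{g-1}(C)$. There are finitely many indices, so a generic $L$ avoids all these loci at once, and $H^0(C,N_i)=0$ for every $i\neq n+2$.

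\textbf{The case $i=n+2$ (main obstacle).} Here the degree is too large for the genericity argument, so I expect this is where the exact choice of $C$ matters. I would try two approaches.

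First, if the intended statement really excludes $i=n+2$, or reads ``$1\le i\le n+3$, $i\neq n+2$'' as in \cite[Lemma 5.9]{Mat25}, then only the other indices need to be checked and the argument above suffices.

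Otherwise, I would use a trick in the spirit of Mattoo: replace the curve with $C\in|M|$ itself, or reinterpret $L$ via $\jmath^{\ast}L_{n+2}^{-1}=\CO_C(M)$, so that $N_{n+2}$ becomes a twist of $L$ by the restriction of $\CO_X(M)$. This step will likely require a careful check against the source, since I see no way to force $H^0=0$ for a bundle of degree $>2g-2$. Assuming it reduces to a degree $<g$ situation, the same Abel--Jacobi genericity finishes the proof.
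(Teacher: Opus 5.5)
For every index other than $n+2$, your argument is the paper's argument. You restrict to $C$ and, writing $L_i=\CO_X(D_i)$, compute $\deg\big(L\otimes\jmath^{\ast}L_i^{-1}\big)=g-1-m\,(M\cdot D_i)<g$ using (iv). You then kill $H^0$ by Abel--Jacobi genericity of $L$ and get $H^{\ge 2}=0$ because $C$ is a curve. Your version is actually tidier than the written one. You note explicitly that a single generic $L$ works for all $i$ at once. The printed proof writes the degree as $g-1+mM\cdot D$ and asserts $M\cdot L_i<0$ ``by (iv)''; relative to the statement as written, these are two sign slips that cancel. Note also that the paper's proof only treats ``$i\neq 12$'', i.e.\ $i\neq n+2$ when $n=10$.

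What needs fixing is the case $i=n+2$. Your second fallback cannot succeed, so you should close this case with a disproof rather than leave it open. We have $\deg N_{n+2}=g-1+mM^2$ and $M^2>0$ since $M$ is ample. Riemann--Roch on $C$ then gives $h^0(C,N_{n+2})\ge\chi(N_{n+2})=mM^2>0$ for \emph{every} $L\in\Pic^{g-1}(C)$. You do not need degree $>2g-2$; in fact for $m\gg 0$ the degree is below $2g-2$. So $\Hom(L_{n+2},\mathcal{L})\neq 0$, and the statement as written is false at $i=n+2$. Your option (1) is the correct reading, and it is exactly what the paper proves.

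For context, the lemma is used in Lemma~\ref{J-descrip-n-pts} through $\Ext^{1}(L_i^{-1},\mathcal{L})$ for all $1\le i\le n+3$. In Example~\ref{10pts-object-in-heart} one has $M\cdot D_i<0$ for all $i$; for instance $M\cdot D_1=-4(a+1)^2$. So the inequality in (iv) of Set-up~\ref{more-ex-setup} is reversed relative to the actual application. For $\RHom(L_i^{-1},\mathcal{L})=R\Gamma(C,L\otimes\jmath^{\ast}L_i)$ under $M\cdot D_i<0$, your same argument gives degree $g-1+m\,(M\cdot D_i)<g$ for every $i$, including $i=n+2$. That is presumably the intended statement.
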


\begin{proof}
For every divisor $D$ on $X$, 
there is an isomorphism
\begin{align*}
\Ext^{\ast}(\mathcal{O}_X(D),\mathcal{L})
& =\Ext^{\ast}(\mathcal{O}_X(D),\jmath_{\ast}L) \cong H^{\ast}(C,L\otimes \CO_{C}(-C\ldotp D)) \\
&= H^{\ast}(C,L(mM \ldotp D)),    
\end{align*}
where $L(mM \cdot D):=L\otimes \CO_{C}(mM \ldotp D)$ is a line bundle of degree $g-1+mM \ldotp D$.
By (iv) in Set-up \ref{more-ex-setup}, for each $i\neq 12$, we have $M\ldotp L_{i}<0$.
Set $\CO_{X}(D):=L_{i}$ and 
then $L(mM \ldotp D)$ is a generic line bundle of degree strictly less than $g$. 
Therefore, we have
$$ 
H^{0}(C,L(mM \ldotp D))=0.
$$
Thus, by the Riemann--Roch theorem, the lemma follows.
\end{proof}

Consider the semi-orthogonal decomposition 
$$
\DC(X)=\langle \mathscr{P}_{X},L_{n+3}^{-1},\cdots,L_{1}^{-1}\rangle.
$$
Let $\imath:\mathscr{P}_{X}\rightarrow \DC(X)$ be the inclusion functor and $\imath^{\ast}: \DC(X) \rightarrow \mathscr{P}_{X}$ the left adjoint of the inclusion functor $\imath$.
Set 
$$
\mathcal{J}:=\imath^{\ast}\mathcal{L}=\imath^{\ast}\jmath_{\ast}L \in \mathscr{P}_{X}.
$$

\begin{lem}\label{J-descrip-n-pts}
The object $\mathcal{J}$ is concentrated in degrees $[0,5]$.
Moreover,
$\mathcal{H}^{0}(\mathcal{J})$ fits into the short exact sequence
\begin{equation*}
\xymatrix@C=0.5cm{
0\ar[r]& \mathcal{L}  \ar[r]& \mathcal{H}^{0}(\mathcal{J}) \ar[r]&  \displaystyle \bigoplus_{i=1}^{n+3}
L_{i}^{-1}\otimes \Ext^{1}(L_{i}^{-1},\mathcal{L}) \ar[r]& 0,
}
\end{equation*}
and for each $1\leq r\leq 5$ and a tuple $\boldsymbol{i}=(i_{0},\cdots,i_{r})$, 
$$
\mathcal{H}^{r}(J)=\bigoplus_{\boldsymbol{i}\in \mathcal{I}_{r}} L_{i_{0}}^{-1}
\otimes
\Big(
\bigotimes_{k=0}^{r-1}
\Ext^{2}\left(L_{i_{k}}^{-1},L_{i_{k+1}}^{-1}\right)
\Big)
\otimes
\Ext^{1}(L_{i_{r}}^{-1},\mathcal{L}), 
$$
where the index sets are given as follows:
\begin{align*}
\mathcal{I}_{1}
:=& \;
\{(i_{0},i_{1}) \mid 
2\leq i_{0}\leq n+3, 
1\leq i_{1}<i_{0}, 
i_{1}\leq 2 \textrm{ or }  i_{0} \geq n+1\},
\\
\mathcal{I}_{2}
:= &\;
\{(i_{0},i_{1},i_{2})\mid n+1\leq i_{0}\leq n+3,\ n+1\leq i_{1}<i_{0},\ 1\leq i_{2}<i_{1}\} \\
\cup &\; 
\{(i_{0},i_{1},i_{2})\mid n+2\leq i_{0}\leq n+3,\ 3\leq i_{1}\leq n,\ 1\leq i_{2}\leq 2\} 
\\ 
\cup&\;  
\{(i_{0},2,1)\mid 3\leq i_{0}\leq n+3\},
\\
\mathcal{I}_{3}
:= & \{(n+3,n+2,n+1,i_{3})\mid 1\leq i_{3}\leq n \} \\
\cup &\; 
\{(n+3,n+1,i_{2},i_{3})\mid 2\leq i_{2}\leq n,\ 1\leq i_{3}<i_{2} \} \\
\cup &\;
\{(i_{0},i_{1},2,1)\mid n+1\leq i_{0}\leq n+3,\ 3\leq i_{1}<i_{0} \}  \\
\cup &\; 
\{(i_{0},i_{1},i_{2},i_{3})\mid n+1\leq i_{0}\leq n+3,\ i_{1}=i_{0}-1, 3\leq i_{2}\leq n,\ 1\leq i_{3}\leq 2 \},
\\
\mathcal I_{4}
:= &\; 
\{(i_{0},i_{1},i_{2},2,1)\mid 
n+2\leq i_{0}\leq n+3,\ 
n+1\leq i_{1}<i_{0},\ 
3\leq i_{3}\leq n
\} \\
\cup & \; 
\{(n+3,n+2,n+1,i_{3},i_{4})\mid 
3\leq i_{3}\leq n,\ 
1\leq i_{4}\leq 2
\} \\
 &\; 
\{(n+3,n+2,n+1,2,1)\},
\\
\mathcal{I}_{5}:= & \;\{(n+3,n+2,n+1,i_{3},2,1)\mid 3\leq i_{3}\leq n\}.  
\end{align*}
\end{lem}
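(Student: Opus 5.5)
The plan is to compute $\imath^{\ast}\mathcal{L}$ by iterated left mutation. Since $\DC(X)=\langle \mathscr{P}_{X},L_{n+3}^{-1},\cdots,L_{1}^{-1}\rangle$, we have
\[
\imath^{\ast}=\mathrm{L}_{L_{n+3}^{-1}}\circ\cdots\circ\mathrm{L}_{L_{1}^{-1}},
\]
with $\mathrm{L}_{L_{1}^{-1}}$ applied first. Equivalently, $\mathcal{J}$ is the convolution of a twisted complex. Its terms are
\[
L_{i_{0}}^{-1}\otimes \Ext^{\bullet}(L_{i_{0}}^{-1},L_{i_{1}}^{-1})\otimes\cdots\otimes \Ext^{\bullet}(L_{i_{r-1}}^{-1},L_{i_{r}}^{-1})\otimes\RHom(L_{i_{r}}^{-1},\mathcal{L})
\]
for decreasing chains $i_{0}>i_{1}>\cdots>i_{r}$, placed in total cohomological degree $r-(\text{sum of Ext degrees})$, together with $\mathcal{L}$ itself in degree $0$. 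This is the standard Koszul/bar description of the projection onto the orthogonal of an exceptional collection, as in \cite[\S 5.1]{Mat25} and \cite[Example 6.17]{KLP26}.

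Next we feed in the inputs.

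First, dualizing gives $\Ext^{k}(L_{i_{0}}^{-1},L_{i_{1}}^{-1})\cong\Ext^{k}(L_{i_{1}},L_{i_{0}})$. By Set-up \ref{more-ex-setup}(ii), for $i_{1}<i_{0}$ this vanishes except for $k=2$, and by (iii) it is nonzero precisely when it is not the case that $3\le i_{1}<i_{0}\le n$.

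Second, $\RHom(L_{i}^{-1},\mathcal{L})$ should be concentrated in degree $1$ for every $i$. This is the analogue of the preceding lemma, applied with the dual line bundles. I would check it by the same Abel--Jacobi argument, using the degree of $L\otimes L_{i}|_{C}$ and condition (iv) on $M\cdot L_{i}$. Any index where this degree estimate fails (for instance $i=n+2$, whose restriction has positive degree) must be handled separately or checked to contribute only in degree $1$ by Riemann--Roch.

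With these inputs, a chain of length $r$ sits in total degree $r-2r-1=-r-1$ in the convolution. After the shift conventions of left mutation (each step contributes a cone, hence a $[1]$), this becomes cohomological degree $r$. Hence $\mathcal{J}$ is filtered by pieces $P_{r}$ concentrated in single degrees $r$. The filtration spectral sequence therefore degenerates for degree reasons, and since all the pieces are sheaves, $\mathcal{H}^{r}(\mathcal{J})$ is their direct sum (an extension, for $r=0$).

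For $r=0$ the pieces are $\mathcal{L}$ and $\bigoplus_{i}L_{i}^{-1}\otimes\Ext^{1}(L_{i}^{-1},\mathcal{L})$. These glue into the stated short exact sequence: the $r=0$ mutation contributions are extensions of the evaluation-type terms by $\mathcal{L}$, because the Ext sits in degree $1$.

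For $r\ge1$ one obtains the displayed formula, once the index sets $\mathcal{I}_{r}$ are identified. They are exactly the strictly decreasing chains $i_{0}>\cdots>i_{r}$ in which no consecutive pair lies in $\{3,\dots,n\}$. Enumerating these chains is combinatorial. Any step into or within $\{3,\dots,n\}$ is forbidden, so a chain can pass through that block at most once. Hence its entries, apart from one index in $\{3,\dots,n\}$, lie in $\{1,2,n+1,n+2,n+3\}$. The longest chain is $n+3>n+2>n+1>i_{3}>2>1$, of length $5$, which gives the bound $[0,5]$. I would list the chains by length and compare them with $\mathcal{I}_{1},\dots,\mathcal{I}_{5}$.

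The main obstacle is getting the degree bookkeeping and the vanishing of $\RHom(L_{i}^{-1},\mathcal{L})$ outside degree $1$ for all $i$, including $n+3$, exactly right. The index-set enumeration is tedious but routine.
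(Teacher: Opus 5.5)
Your strategy is the paper's own. Its proof simply invokes \cite[Proposition 2.17]{Mat25}, which describes $\imath^{\ast}=\mathrm{L}_{L_{n+3}^{-1}}\circ\cdots\circ\mathrm{L}_{L_{1}^{-1}}$ as a convolution indexed by decreasing chains, and then repeats the argument of \cite[Proposition 5.10]{Mat25}.

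\textbf{The gap.} The one input you leave open is exactly where the argument can fail, and your diagnosis of it is backwards. One has $\RHom(L_i^{-1},\mathcal{L})\cong H^{\ast}(C,L\otimes L_i|_C)$, where $L\otimes L_i|_C$ has degree $g-1+m\,(M\cdot L_i)$. So for generic $L$ it is concentrated in degree $1$ if and only if $M\cdot L_i\le 0$.
\begin{itemize}
\item For $i=n+2$ this is automatic: $L_{n+2}|_C=\CO_C(-M)$ has degree $-mM^2<0$. So $n+2$ is not the problematic index.
\item For $i\neq n+2$ you need $M\cdot L_i<0$, i.e.\ $L_{n+2}\cdot L_i>0$. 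This is the sign the paper actually uses in proving the preceding lemma. It also holds in Example \ref{10pts-object-in-heart}: for $L_1=\CO_X(-aE_1-E_2)$ one gets $M\cdot(-aE_1-E_2)=-4(a+1)^2$.
\end{itemize}
This sign is the \emph{opposite} of Set-up \ref{more-ex-setup}(iv) as printed. Read literally, (iv) would put every $\RHom(L_i^{-1},\mathcal{L})$ with $i\neq n+2$ in degree $0$ with nonzero $H^0$. The evaluation maps would then create an $\mathcal{H}^{-1}$, and the lemma would fail.

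For the same reason, the preceding lemma must be read with $L_i^{-1}$ in place of $L_i$: with the correct sign, $\RHom(L_i,\mathcal{L})$ sits in degree $0$. So "the same Abel--Jacobi argument applied to the dual bundles" is not a formal consequence of that lemma. You have to redo the degree count with the sign above. Riemann--Roch then gives $\dim\Ext^1(L_i^{-1},\mathcal{L})=-m\,(M\cdot L_i)>0$ for every $i$.

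\textbf{Smaller points.}
\begin{itemize}
\item \emph{Degree bookkeeping.} A chain $i_0>\cdots>i_r$ contributes $L_{i_0}^{-1}\otimes V[-(2r+1)]$, placed in position $-(r+1)$ of the twisted complex. It therefore lands in cohomological degree $(2r+1)-(r+1)=r$. Replace your ``$-r-1$, then shift'' step by this computation.
\item \emph{Degeneration and splitting.} The spectral sequence degenerates because twisted differentials only go toward shorter chains, which sit in degree at most that of the source, while a spectral-sequence differential raises degree by one. The splitting claims use $\Hom=\Ext^1=0$ between the $L_i^{-1}$, which follows from exceptionality plus (ii).
\item \emph{Combinatorics.} Steps \emph{into} $\{3,\dots,n\}$ from $n+1,n+2,n+3$ are allowed, and the length-$5$ chain uses one. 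Only steps \emph{within} the block vanish.
\item \emph{Index sets.} Your characterization of the contributing chains is the correct one, and comparing it with the printed sets will show a discrepancy.
  \begin{itemize}
  \item The printed $\mathcal{I}_2$ omits the nonzero chains $(n+1,j,c)$ with $3\le j\le n$ and $c\in\{1,2\}$. These are nonzero because $\Ext^2(L_j,L_{n+1})\neq 0$ by (iii), so the second piece of $\mathcal{I}_2$ should start at $i_0=n+1$.
  \item The printed $\mathcal{I}_3$ contains tuples such as $(n+3,n+1,i_2,i_3)$ with $3\le i_3<i_2\le n$, which contribute zero.
  \end{itemize}
  Neither discrepancy affects the range $[0,5]$ or the later use of the lemma.
\end{itemize}
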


\begin{proof}
The idea is to apply \cite[Proposition 2.17]{Mat25} to calculate the object $\imath^{\ast}\mathcal{L}$.
Under the Set-up \ref{more-ex-setup},
the argument follows exactly the same lines as the proof of \cite[Proposition 5.10]{Mat25}.    
\end{proof}

In particular, following \cite[Example 6.17]{KLP26}, 
we obtain some explicit objects in the heart of an induced bounded $t$-structure on $\mathscr{C}_{X}$.
Note that there is an equivalence between two phantoms
$$
\RCH(-,\CO_{X}): \mathscr{P}_{X} \longrightarrow \mathscr{C}_{X}.
$$

\begin{prop}\label{uniform-example}
Let $\mathcal{J}^{\prime}
:=\RCH(\mathcal{J},\CO_{X})[1]\in \mathscr{C}_{X}$. 
Then the object 
$\mathcal{J}^{\prime}\in ^{(\tau_{X})_{\mathscr{C}_{X}}^{-}}\mathscr{C}_{X}^{\heartsuit}$.
\end{prop}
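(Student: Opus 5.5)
We need to show that $\mathcal{J}^{\prime}$ lies in the heart of the connectively induced $t$-structure on $\mathscr{C}_{X}$. That is, we need $\mathcal{J}^{\prime}\in {}^{\tau_X}\DC(X)^{\leq 0}\cap\mathscr{C}_{X}$, and $\Hom(F,\mathcal{J}^{\prime})=0$ for all $F\in\mathscr{C}_{X}^{\leq 0}[1]$.

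First, membership in $\mathscr{C}_{X}$ is automatic. We have $\mathcal{J}\in\mathscr{P}_{X}$, and the duality $\RCH(-,\CO_{X})$ sends $\mathscr{P}_{X}$ to $\mathscr{C}_{X}$.

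\textbf{Connective part.} Lemma \ref{J-descrip-n-pts} shows that $\mathcal{J}$ is a complex with cohomology in degrees $[0,5]$. Every $\mathcal{H}^{r}(\mathcal{J})$ with $r\ge1$ is a direct sum of line bundles, while $\mathcal{H}^{0}(\mathcal{J})$ is an extension of line bundles by the torsion sheaf $\mathcal{L}=\jmath_{\ast}L$, supported on the curve $C$.

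Dualizing uses $\RCH(\text{line bundle},\CO_X)=$ line bundle in degree $0$ and $\RCH(\jmath_\ast L,\CO_X)=\jmath_\ast(L^{-1}\otimes\omega_C\otimes\omega_X^{-1}|_C)[-1]$, a sheaf in degree $1$. Running the hypercohomology spectral sequence for $\RCH(\mathcal{J},\CO_X)$ over the canonical filtration of $\mathcal{J}$, the piece from $\mathcal{H}^{r}(\mathcal{J})$ contributes in degree $-r$ for $r\ge1$, and $\mathcal{H}^0$ contributes in degrees $0$ and $1$. So $\RCH(\mathcal{J},\CO_X)$ has cohomology in $[-5,1]$, and after the shift $[1]$, $\mathcal{J}'$ has cohomology in $[-6,0]$. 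Hence $\mathcal{J}'\in\DC(X)^{\leq0}$. This gives the connective part.

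\textbf{Coconnective part.} This is the real content, and I would follow \cite[Example 6.17]{KLP26} and \cite[Lemma 5.11]{Mat25}. The key is to characterize $\mathscr{C}_{X}^{\geq 0}$ in terms of the ambient $t$-structure. From the construction in the proof of Theorem \ref{KLP-thm}, an object $E\in\mathscr{C}_X$ lies in $\mathscr{C}_X^{\ge0}$ iff $\Hom(F,E)=0$ for $F\in\DC(X)^{\le -1}\cap\mathscr{C}_X$. It suffices to check the stronger condition $\Hom(F,\mathcal{J}')=0$ for all $F\in \DC(X)^{\le -1}$ with $F\in\mathscr{C}_X$.

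Using the duality $\RCH(-,\CO_X)$, which is an anti-equivalence $\mathscr{P}_X\to\mathscr{C}_X$, I would transport the problem back to $\mathscr{P}_X$. The condition becomes
\[
\Hom(\mathcal{J}[1],F^\vee)=\Hom(\mathcal{J},F^{\vee}[-1])=0,
\]
where $F^\vee$ ranges over duals of connective objects of $\mathscr{C}_X$.

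Since $\mathcal{J}=\imath^\ast\mathcal{L}$, adjunction gives
\[
\Hom(\mathcal{J},P)=\Hom(\mathcal{L},P)\quad\text{for } P\in\mathscr{P}_X.
\]
So it suffices to show $\Hom(\mathcal{L},P[-1])=0$, i.e. $\Ext^{-1}(\mathcal{L},P)=0$, for the relevant $P$.

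For a sheaf $\mathcal{L}$ with one-dimensional support, $\Ext^{-1}(\mathcal{L},P)$ is controlled by the local Ext sheaves $\CExt^{j}(\mathcal{L},\mathcal{H}^{i}(P))$ with $j-i=-1$, and $\CExt^{j}(\mathcal{L},-)$ vanishes unless $0\le j\le 1$. So the vanishing reduces to showing that $P=F^\vee$ has no cohomology sheaves in degrees $\le 0$ that admit maps from a torsion sheaf. Equivalently, the duals of connective objects of $\mathscr{C}_X$ are coconnective objects with torsion-free lowest cohomology, since duals of sheaves are concentrated in degrees $\ge0$ and $\mathcal{H}^0$ of a dual is reflexive, hence locally free on a surface.

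\textbf{Main obstacle.} The delicate point is this bookkeeping between degree $0$ and degree $1$ under duality. Specifically, one must show that $\mathcal{H}^{0}(F^\vee)$ is torsion-free and that $\Hom(\mathcal{L},\mathcal{H}^{0}(F^\vee))=0$. This holds because a torsion sheaf admits no nonzero maps to a locally free sheaf; the $\CExt^{1}$ term shifts degrees out of range.

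I would verify this exactly as in \cite[Example 6.17]{KLP26}, whose argument only uses that $\mathcal{J}$ has the shape described in Lemma \ref{J-descrip-n-pts}: locally free higher cohomology, and $\mathcal{H}^0$ an extension of line bundles by $\jmath_\ast L$.
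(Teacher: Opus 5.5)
Your route differs from the paper's and is sound in outline, but the coconnective step has a shift error that, as written, breaks the argument. Put $\mathbb{D}:=\RCH(-,\CO_{X})$. Since $\mathbb{D}(E[1])=\mathbb{D}(E)[-1]$, one has $\mathbb{D}(\mathcal{J}')=\mathcal{J}[-1]$. So for $F\in\mathscr{C}_{X}\cap\DC(X)^{\leq -1}$,
\[
\Hom(F,\mathcal{J}')\cong\Hom(\mathcal{J}[-1],\mathbb{D}F)=\Hom(\mathcal{J},\mathbb{D}F[1])\cong\Hom(\mathcal{L},\mathbb{D}F[1]),
\]
and not $\Hom(\mathcal{J}[1],\mathbb{D}F)$. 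As an identity in $\DC(X)$, yours already fails for $\mathcal{J}=\CO_{X}$, $F=\CO_{X}[1]$: it gives $0$ instead of $\CN$.

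The group $\Ext^{-1}(\mathcal{L},\mathbb{D}F)$ you reduce to vanishes for trivial degree reasons, since $\mathbb{D}F\in\DC(X)^{\geq 1}$; that is a warning sign. Consistently, for such $F$ the sheaf $\mathcal{H}^{0}(F^{\vee})$ in your ``main obstacle'' is zero. The correct group is
\[
\Hom(\mathcal{L},\mathbb{D}F[1])=\Hom(\mathcal{L},\mathcal{H}^{1}(\mathbb{D}F)),
\]
where $\mathcal{H}^{1}(\mathbb{D}F)=\mathcal{H}om(\mathcal{H}^{-1}(F),\CO_{X})$. This sheaf is reflexive, hence locally free on the surface $X$, so it receives no nonzero map from the torsion sheaf $\mathcal{L}=\jmath_{\ast}L$. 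Equivalently, writing $F=G[1]$ with $G\in\mathscr{C}_{X}^{\leq 0}$, the group is $\Hom(\mathcal{L},\mathbb{D}G)=\Hom(\mathcal{L},\mathcal{H}om(\mathcal{H}^{0}(G),\CO_{X}))$. This is exactly the fact in your last paragraph: your key idea is right, but it must be applied one degree up relative to $F$, not one degree down. Your local-$\mathcal{E}xt$ paragraph then becomes unnecessary. The adjunction step is legitimate, because $\mathbb{D}$ interchanges $\mathscr{C}_{X}={}^{\perp}\langle L_{1},\dots,L_{n+3}\rangle$ and $\mathscr{P}_{X}=\langle L_{n+3}^{-1},\dots,L_{1}^{-1}\rangle^{\perp}$. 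Your connective half agrees with the paper.

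With this fix, the comparison is as follows. The paper never transports the Hom-vanishing back to $\mathscr{P}_{X}$. Instead it computes all cohomology sheaves of $\mathcal{J}'$: $\mathcal{H}^{0}(\mathcal{J}')\cong\jmath_{\ast}L^{-1}\otimes\CO_{X}(C)$, and each $\mathcal{H}^{-k}(\mathcal{J}')$, $1\leq k\leq 6$, is a direct sum of copies of the $L_{i}$. It then invokes \cite[Lemma 5.8(a)]{KLP26}. In effect, the ambient truncation $\tau^{\leq -1}\mathcal{J}'$ lies in $\langle L_{1},\dots,L_{n+3}\rangle$, which receives no maps from $\mathscr{C}_{X}$, while $\tau^{\geq 0}\mathcal{J}'$ receives none from $\DC(X)^{\leq -1}$.

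The paper's route needs the full shape in Lemma \ref{J-descrip-n-pts} (every $\mathcal{H}^{r}(\mathcal{J})$, $r\geq 1$, a sum of $L_{i}^{-1}$) for the coconnective half as well, and in return gives an explicit description of $\mathcal{J}'$. Your corrected route uses that shape only for connectivity. For coconnectivity it uses only that $\mathcal{J}=\imath^{\ast}\mathcal{L}$ with $\mathcal{L}$ torsion, so it shows more generally that $\mathbb{D}(\imath^{\ast}T)[1]\in\mathscr{C}_{X}^{\geq 0}$ for every torsion sheaf $T$.
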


\begin{proof}
By Lemma \ref{J-descrip-n-pts} and the spectral sequence
\begin{equation*}
E_{2}^{p,q} 
=\CExt^p(\mathcal{H}^{-q}(\mathcal{J}), \CO_{X}) \Rightarrow \CExt^{p+q}(\mathcal{J}, \CO_{X}),
\end{equation*}
it follows that  $\mathcal{J}^{\prime}$  is concentrated in degrees $[-6,0]$.
By Grothendieck--Verdier duality and projection formula, we have
\begin{align*}
\RCH(\mathcal{L},\CO_{X})= &\; \RCH(\jmath_{\ast} L,\CO_{X})\cong \jmath_{\ast}\RCH(L, \omega_{C}\otimes\omega_{X}^{-1})[-1] \nonumber \\
\cong & \;  \jmath_{\ast}\RCH(L, \CO_{C}(C))[-1] \cong \jmath_{\ast}(L^{-1}\otimes \CO_{C}(C))[-1] \nonumber \\
\cong &\; \jmath_{\ast}L^{-1} \otimes \CO_{X}(C)[-1]. 
\end{align*}
Moreover, by direct computations,
the $0$-th cohomology sheaf
$$
\mathcal{H}^{0}(\mathcal{J}^{\prime})
\cong 
\CExt^{1}(\mathcal{H}^{0}(\mathcal{J}),\CO_{X})
\cong
\CExt^{1}(\mathcal{L},\CO_{X})
\cong
\jmath_{\ast} L^{-1}\otimes \CO_{X}(C),
$$ 
$\mathcal{H}^{-k}(\mathcal{J}^{\prime})$ is a direct sum of copies of $L_{k}, \cdots, L_{n+3}$ for $1\leq k\leq 3$, and 
$\mathcal{H}^{-k}(\mathcal{J}^{\prime})$ is a direct sum of copies of $L_{n+k-3},\cdots, L_{n+3}$ for $4\leq k\leq 6$.
This yields that the object $\mathcal{J}^{\prime}$ satisfies the conditions of \cite[Lemma 5.8(a)]{KLP26} and thus $\mathcal{J}^{\prime}\in ^{(\tau_{X})_{\mathscr{C}_{X}}^{-}}\mathscr{C}_{X}^{\heartsuit}$.
\end{proof}

As an application, we have:

\begin{ex}\label{10pts-object-in-heart}
Let $X$ be the blow-up of $\PB^{2}$ at $10$ points in general position. 
For every integer $a\geq 0$, twisting the non-full exceptional collection \eqref{NEC-10pts-case} by $\CO_{X}(-aE_{1}-E_{2})$, we get a non-full exceptional collection of line bundles
\begin{align}\label{twisted-10pts-EC}
& \{\CO_{X}(-aE_{1}-E_{2}),\CO_{X}(A-aE_{1}-E_{2}),
\begin{smallmatrix} \CO_{X}(B_{3}-aE_{1}-E_{2}) \\ \cdots \\ \CO_{X}(B_{10}-aE_{i}-E_{2})\end{smallmatrix}, \CO_{X}(G-aE_{1}-E_{2}), \nonumber \\
 & \;\;\;\; \CO_{X}(F_{a}-aE_{1}-E_{2}),\CO_{X}(F_{a+1}-aE_{1}-E_{2})\}.
\end{align}
Then, its right orthogonal complement is the phantom category 
$$
\RA_{X}^{(a)} \otimes \CO_{X}(-aE_{1}-E_{2}):=\{ \mathcal{F} \otimes \CO_{X}(-aE_{1}-E_{2}) \mid \mathcal{F}\in \RA_{X}^{(a)} \}.
$$
Following the proof of \cite[Proposition 5.1]{Mat25}, for any positive integers $c$ and $d$ such that $\frac{d}{c}>\frac{37}{228}$, the divisor $-cK_{X}+dH$ is ample.
In particular, the divisor $-4K_{X}+H$ is ample.
Thus, the divisor 
$$
-F_{a}+aE_{1}+E_{2}=(a+1)(-4K_{X}+H)
$$ 
is ample, for every $a\geq 0$. 
By Proposition \ref{10pts-ext2-main} and Proposition \ref{10pts-ext1-main}, a direct computation yields that the sequence \eqref{twisted-10pts-EC} satisfies the conditions in Set-up \ref{more-ex-setup}.
Let $\mathscr{C}_{X}$  be the left orthogonal complement of \eqref{twisted-10pts-EC}.
Then, there is an equivalence
$
-\otimes \CO_{X}(K_{X}): \mathscr{C}_{X}\rightarrow \RA_{X}^{(a)} \otimes \CO_{X}(-aE_{1}-E_{2}).
$
By Proposition \ref{uniform-example}, there exists an object 
$\mathcal{J}^{\prime}  \in ^{(\tau_{X})_{\mathscr{C}_{X}}^{-}}\mathscr{C}_{X}^{\heartsuit}$.
As a result, it gives an explicit object in the heart of the induced bounded $t$-structure on $\RA_{X}^{(a)}$ in Theorem \ref{bd-t-str-ourphant}.
\end{ex}

%=====================================================================
 
\section{Final remarks}\label{Final-remarks}

In this section, we propose possible ways to construct further new phantom categories via mutations and Krah's approach.

\subsection{One more phantom on ten-point blow-up}

Let $X$ be the blow-up of $\PB^{2}$ at $10$ points in general position.
Then, $X$ can be viewed as the blow-up of $S$ at $7$ points, where $S$ is the blow-up of $\PB^{2}$ at three points in general position.
In \cite{KN98}, Karpov--Nogin showed that $\DC(S)$ admits a three-block decomposition
$$
\DC(S)=\langle \CO_{S}, \CO_{S}(h_{1}), \CO_{S}(h_{2}), \CO_{S}(h_{3}), \CO_{S}(H),\CO_{S}(H^{\prime}) \rangle,
$$
where $H^{\prime}:=2H-E_{1}-E_{2}-E_{3}$ and $h_{i}:=H-E_{i}$, $i=1,2,3$.
Hence, by Orlov's blow-up formula and mutations, we obtain a full exceptional collection of line bundles on $X$
\begin{equation}\label{one-moreFEC-10pts} 
\{\CO_{X}, \begin{smallmatrix} \CO_{X}(E_{4}) \\ \cdots \\ \CO_{X}(E_{10}) \end{smallmatrix}, \begin{smallmatrix} \CO_{X}(H-E_{1}) \\ \CO_{X}(H-E_{3}) \\ \CO_{X}(H-E_{2}) \end{smallmatrix}, \CO_{X}(H),\CO_{X}(2H-E_{1}-E_{2}-E_{3})\}.
\end{equation}
By construction, both the full exceptional collections \eqref{one-moreFEC-10pts} and \eqref{standard-FEC-linebund} are mutation-equivalent to \eqref{standard-FEC} for $n=10$,
so \eqref{one-moreFEC-10pts} and \eqref{standard-FEC-linebund} are mutation-equivalent for $n=10$.
Now applying the involution \eqref{10pts-involution} to \eqref{one-moreFEC-10pts}, we get some new divisors:
$$
\begin{array}{r@{\hspace{3pt}}lr@{\hspace{3pt}}l}
G_{i}&:=4K_{X}-H+E_{i}, & M_{1}&:=6K_{X}-H, \\
M_{2}&:=6K_{X}-2H+E_{1}+E_{2}+E_{3},
\end{array}
$$
where $1\leq i\leq 3$. 
Likewise, using Dumnicki's diagram-cutting method, we can obtain the following:

\begin{thm}
Let $X$ be the blow-up of $\PB^{2}$ at $10$ points in general position.
Then the sequence 
$$
\{\CO_{X},\CO_{X}(B_{4}),\cdots,\CO_{X}(B_{10}),\CO_{X}(G_{1}),\CO_{X}(G_{2}),\CO_{X}(G_{3}),\CO_{X}(M_{1}),\CO_{X}(M_{2})\}
$$
is a non-full exceptional collection of line bundles of maximal length.
In particular, its right orthogonal complement $\RT_{X}$ is a universal phantom subcategory.
Moreover, the second Hochschild cohomology $\HH^{2}(\RT_{X})=\CN^{12}$ and the third Hochschild cohomology $\HH^{3}(\RT_{X})=\CN^{194}$.
\end{thm}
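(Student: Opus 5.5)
Let $L_{1}=\CO_{X}$, $L_{2},\dots,L_{8}=\CO_{X}(B_{4}),\dots,\CO_{X}(B_{10})$, $L_{9},L_{10},L_{11}=\CO_{X}(G_{1}),\CO_{X}(G_{2}),\CO_{X}(G_{3})$, $L_{12}=\CO_{X}(M_{1})$ and $L_{13}=\CO_{X}(M_{2})$. The plan follows the proof of Theorem \ref{mainthm} and Proposition \ref{HH-A-phantom}.

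\textbf{Step 1: numerical exceptionality.} Since $\iota$ preserves $\chi$, $K_{X}$ and the intersection form, the sequence is numerically exceptional because \eqref{one-moreFEC-10pts} is exceptional.

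\textbf{Step 2: exceptionality.} It suffices to show $\hom(L_{j},L_{i})=0$ and $\ext^{2}(L_{j},L_{i})=0$ for $i<j$. The $\Ext^{2}$-groups equal $h^{0}(K_{X}+D_{i}-D_{j})$. I expect each such divisor to meet the nef class $H$ negatively, so they vanish. For the $\Hom$-groups we need $h^{0}(D_{i}-D_{j})=0$. The pairs $B_{i}-B_{j}=E_{j}-E_{i}$, $G_{i}-G_{k}=E_{k}-E_{i}$ and $B_{j}-G_{i}$ with negative $H$-degree are handled directly. The remaining classes are:
\begin{itemize}
\item $-B_{j}=6H-E_{j}-\sum_{i\ne j}2E_{i}$;
\item $-G_{i}=13H-5E_{i}-\sum_{k\ne i}4E_{k}$;
\item $B_{j}-G_{i}=7H-3E_{i}-3E_{j}-\sum 2E_{k}$;
\item $-M_{1}=19H-\sum 6E_{k}$;
\item $-M_{2}=20H-7(E_{1}+E_{2}+E_{3})-\sum_{k\ge4}6E_{k}$;
\item $G_{i}-M_{1}$, $G_{i}-M_{2}$, $B_{j}-M_{\ell}$ and $M_{1}-M_{2}$.
\end{itemize}
Most of these fall under Proposition \ref{10pts-NoSect-case1} and Corollary \ref{10pts-NoSect-case1-cor} with $a=0,1$, after permuting points and applying Cremona transformations (Proposition \ref{Cremnona-transform-consequ}). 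For the ones that do not, namely the near-uniform systems $\mathcal{L}(19,6^{10})$ and $\mathcal{L}(20,7^{3},6^{7})$, I first apply Cremona reductions. Where a reduction does not reach a known case, I apply Proposition \ref{Dumnicki-method} with an explicit affine partition of the same shape as in Step 1 of Proposition \ref{10pts-NoSect-case1}: one corner triangle for each point of larger multiplicity, plus staircase pieces cut by lines of slopes $\pm1$, $\pm3$ and $-1/3$.

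\textbf{Step 3: non-fullness.} Check that all forward $\Hom(L_{i},L_{j})$ vanish, using negative $H$-degree and non-collinearity of general points as in Lemma \ref{10pts-forward-hom-zero-main}. This gives $e(L_{i},L_{j})\ge1$, hence $\phac\ge0$ and $\ph\ge2$, so Lemma \ref{ht-ge-ph-nonfull} shows the collection is not full. Proposition \ref{prop:univ_phantom} then shows $\RT_{X}$ is a universal phantom.

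\textbf{Step 4: Hochschild cohomology.} I will prove analogues of Propositions \ref{10pts-ext2-main} and \ref{10pts-ext1-main}:
\begin{itemize}
\item forward $\Ext^{\bullet}(L_{i},L_{j})$ is concentrated in degree $2$;
\item $\Ext^{\bullet}(L_{j},L_{i}\otimes\omega_{X}^{-1})$ is concentrated in degree $1$.
\end{itemize}
For the first, the Euler characteristic is computed via Lemma \ref{cubic-criterion} with the elliptic curves $C_{k}$, which reduces to further non-effectivity statements of the same kind. For the second, one shows vanishing of $\Hom$ and $\Ext^{2}$, the latter by negative $H$-degree. With both in hand, Kuznetsov's spectral sequence (Proposition \ref{NHH--spectral-prop}) is supported on $q=2p+3$ and degenerates, giving $h=4$. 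Therefore
\[
\HH^{2}(\RT_{X})\cong\HH^{2}(X)=\CN^{12}, \qquad \HH^{3}(\RT_{X})\cong E_{1}^{-1,5}.
\]
Here $E_{1}^{-1,5}$ is the sum over pairs $i<j$ of $\chi(L_{j}\otimes L_{i}^{-1})\cdot\bigl(-\chi(L_{i}\otimes L_{j}^{-1}\otimes\omega_{X}^{-1})\bigr)$. This is a finite Riemann--Roch computation that should yield $194$.

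\textbf{Main obstacle.} The hardest step is establishing emptiness of the high-multiplicity near-uniform systems such as $\mathcal{L}(19,6^{10})$ and $\mathcal{L}(20,7^{3},6^{7})$, together with the auxiliary systems needed for Step 4. Expected dimension zero is not enough here, and these may not Cremona-reduce to the cases already treated. My first attempt is to Cremona-reduce maximally. If that fails, I will construct a Dumnicki partition by hand, guided by the picture in Figure \ref{affine-partition}, and verify the staircase line counts for each piece.
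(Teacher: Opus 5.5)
Your proposal is correct and follows the route the paper indicates for this theorem: the paper gives no separate proof and says only that the result follows ``likewise'' by the scheme of Theorem \ref{mainthm} and Proposition \ref{HH-A-phantom} using Dumnicki's method. The Riemann--Roch count in Step 4 does give $14+18+12+12+42+42+42+6+6=194$. Your ``main obstacle'' is not a real obstacle, since every needed vanishing already follows from the paper's results with $a\in\{0,1\}$ and no new partition is required:
\begin{itemize}
\item $\mathcal{L}(19,6^{10})$ is exactly the case $a=1$ of the second system in Proposition \ref{10pts-NoSect-case1}.
\item $\mathcal{L}(20,7^{3},6^{7})$ reduces to $\mathcal{L}(19,6^{10})$ by one Cremona transformation.
\item The hardest Step 4 system, $\mathcal{L}(22,7^{10})$, is the case $a=1$ of the first system in Proposition \ref{10pts-NoSect-case2}.
\end{itemize}
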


\begin{rem}
Similar to \cite{Kra24,KKL+26,MXY25}, this result can also be obtained by using the SHGH conjecture, as established in \cite{DJ07}. 
Moreover, likewise to Theorem \ref{bd-t-str-ourphant}, we can show that the phantom category $\RT_{X}$ has a bounded $t$-structure.
\end{rem}

\subsection{Countably many phantoms on eleven-point blow-up}
Let $X$ be the blow-up of $\mathbb{P}^{2}$ at $11$ points in general position. 
Then, for each integer $b\geq 0$, by Remark \ref{Ori-FEC-mut-equ-standard}, the full exceptional collection \eqref{Original-FEC} is mutation-equivalent to \eqref{standard-FEC-linebund} for $n=11$.
Similarly, applying the involution 
\begin{equation}\label{involution-Pic-11pts}
\begin{array}{cccl}
  \iota: & \mathrm{Pic}(X) & \longrightarrow  & \mathrm{Pic}(X)  \\
&D&\longmapsto &
-D-(D\cdot K_{X})K_{X},
\end{array}
\end{equation}
to \eqref{Original-FEC} for $n=11$, we get the divisors 
$$
\begin{array}{r@{\hspace{3pt}}lr@{\hspace{3pt}}l}
S:=& K_{X}-H+E_{1}+E_{2}, 
& T_j:=& K_{X}-E_{j}, \\
R:=&2K_{X}-H+E_{1}, &
N_{b}:=&2(b+1)K_{X}-(b+1)H+bE_{1}+E_{2},
\end{array}
$$
where $3\leq j\leq 11$.
Using Dumnicki's diagram-cutting method, in the forthcoming paper \cite{MXY26}, we will prove the following:

\begin{thm}
Let $X$ be the blow-up of $\mathbb{P}^{2}$ at $11$ points in general position.
Then, for every integer $b\geq 0$, the sequence
$$
\{\CO_{X},\CO_{X}(S),\CO_{X}(T_{3}),\cdots,\CO_{X}(T_{11}),\CO_{X}(R),\CO_{X}(N_{b}),\CO_{X}(N_{b+1})\}
$$
is a non-full exceptional collection of line bundles of maximal length. 
In particular, its right orthogonal complement $\RB_{X}^{(b)}$ is a universal phantom subcategory.
Moreover, the phantom category $\RB_{X}^{(b)}$ has a bounded $t$-structure.
\end{thm}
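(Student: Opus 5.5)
The plan is to follow the same three-stage scheme as the proof of Theorem \ref{mainthm}, now on $X=\mathrm{Bl}_{11\,\mathrm{pts}}\PB^{2}$. Here $K_{X}=-3H+\sum_{i=1}^{11}E_{i}$, $K_{X}^{2}=-2$, and $\iota$ is the involution \eqref{involution-Pic-11pts}.

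\textbf{Numerical exceptionality.} A direct Riemann--Roch check shows that $\iota$ preserves the intersection form and fixes $K_{X}$. Hence $\chi(\iota(D))=\chi(D)$ for every divisor $D$. Applying $\iota$ to the full exceptional collection \eqref{Original-FEC} with $n=11$ then shows that
$$
\{\CO_{X},\CO_{X}(S),\CO_{X}(T_{3}),\cdots,\CO_{X}(T_{11}),\CO_{X}(R),\CO_{X}(N_{b}),\CO_{X}(N_{b+1})\}
$$
is numerically exceptional of length $14=\mathrm{rk}\,K_{0}(X)$. I would record the intersection table as in Section \ref{10pts-pf-mianthm1}: $S^{2}=T_{j}^{2}=-1$, $R^{2}=0$, $R\cdot N_{b}=1$, and so on.

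\textbf{Exceptionality.} As in Proposition \ref{EC-10pts-main-prop}, it suffices to kill the backward $\Hom$ and $\Ext^{2}$ groups.
\begin{enumerate}
\item All backward $\Ext^{2}$ groups are $h^{0}$ of divisors with negative $H$-degree, so they vanish because $H$ is nef.
\item Some backward $\Hom$ groups are elementary: $T_{i}-T_{j}=E_{j}-E_{i}$, and $S-T_{j}$ has negative $H$-degree.
\item The remaining backward $\Hom$ groups are $h^{0}$ of explicit plane systems, such as
\begin{align*}
-S&=4H-2E_{1}-2E_{2}-\textstyle\sum_{i\ge3}E_{i},\\
-N_{b}&=(7b+7)H-(3b+2)E_{1}-(2b+3)E_{2}-\textstyle\sum_{i\ge3}(2b+2)E_{i},
\end{align*}
together with $S-N_{b}$, $T_{j}-N_{b}$, $R-N_{b}$, $-R$, $S-R$ and $T_{j}-R$.
\end{enumerate}
Their multiplicities grow linearly in $b$, so the SHGH results for small multiplicities do not apply. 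I would first use Cremona transformations (Proposition \ref{Cremnona-transform-consequ}) to reduce these systems to one or two normal forms $\mathcal{L}(7b+c,\,3b+c',\,(2b+c'')^{10})$. I would then prove emptiness of these normal forms by Proposition \ref{Dumnicki-method}, with an affine partition of $\Delta_{d}$ into $11$ pieces. Following the pattern of Proposition \ref{10pts-NoSect-case1}, the construction would be:
\begin{itemize}
\item three corner triangles for the point of multiplicity $3b+\cdots$ and two of the others;
\item two slanted strips cut by lines of slope $\pm1$;
\item the remaining pieces cut by lines of slopes $3$, $1/3$, etc.
\end{itemize}
Each piece should contain exactly $\binom{m+1}{2}$ lattice points, distributed on $m$ lines with $1,2,\dots,m$ points, where $m$ is the multiplicity of the corresponding point. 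This uniform-in-$b$ diagram cutting is the step I expect to be the main obstacle. I would first guess the cut lines numerically for $b=0,1,2$ and then verify the counts by parametrizing each piece row by row, as in Step 2 of that proof.

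\textbf{Non-fullness, universality and $t$-structure.}
\begin{itemize}
\item \emph{Forward Homs.} I would prove the analogue of Lemma \ref{10pts-forward-hom-zero-main}: all forward $\Hom(L_{i},L_{j})$ vanish. Most cases follow from negative $H$-degree. The exceptions, such as $T_{j}-S=H-E_{1}-E_{2}-E_{j}$, $E_{i}-E_{j}$ and $N_{0}-R=E_{2}-E_{1}$, are not effective for general points.
\item \emph{Non-fullness.} Forward $\Hom$ vanishing gives $e(L_{i},L_{j})\ge1$ for $i<j$. Hence $\phac\ge0$ and $\ph\ge2>0$, so Lemma \ref{ht-ge-ph-nonfull} shows the collection is not full.
\item \emph{Universality.} Since the collection has maximal length, Proposition \ref{prop:univ_phantom} shows that $\RB_{X}^{(b)}$ is a universal phantom.
\item \emph{Bounded $t$-structure.} As in Theorem \ref{bd-t-str-ourphant}, the same forward $\Hom$ vanishing lets Theorem \ref{KLP-thm} induce a bounded $t$-structure on the left orthogonal complement. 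Transporting it through the equivalence $-\otimes\omega_{X}$ gives a bounded $t$-structure on $\RB_{X}^{(b)}$.
\end{itemize}
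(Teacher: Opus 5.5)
Your three-stage outline is the route the paper itself indicates for this statement. The paper does not prove the theorem here; it announces it for \cite{MXY26} ``using Dumnicki's diagram-cutting method'', i.e.\ the scheme of Theorem \ref{mainthm}. The formal parts of your proposal are correct:
\begin{itemize}
\item $\iota$ is an isometry fixing $K_X$, because $K_X^2=-2$.
\item Every backward $\Ext^{2}$ is $h^0$ of a divisor of negative $H$-degree.
\item The only forward $\Hom$'s of non-negative $H$-degree are $H-E_1-E_2-E_j$, $E_i-E_j$ and, for $b=0$, $E_2-E_1$.
\item Forward $\Hom$-vanishing then gives $\ph\ge 2$, universality, and the bounded $t$-structure via Theorem \ref{KLP-thm} and $-\otimes\omega_X$, exactly as in Sections \ref{10pts-pf-mianthm1} and \ref{appl-t-struture}.
\end{itemize}
What is missing is the only non-formal input: emptiness of the high-multiplicity systems for every $b$. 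You describe the shape of a partition but never produce one, so at that step the proposal is a plan rather than a proof. The paper likewise leaves this step to \cite{MXY26}.

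To pin down what remains, the backward $\Hom$ systems reduce as follows:
\begin{itemize}
\item $T_j-N_b$ becomes $S-N_b=\mathcal{L}(7b+3,3b,(2b+1)^{10})$ after one Cremona transformation at $p_1,p_2,p_j$ (with $c=1$).
\item $R-N_b=-N_{b-1}$, which equals $E_1-E_2$ when $b=0$.
\item $N_b-N_{b+1}=-R$, which is $-N_0$ with $E_1,E_2$ swapped.
\item The rest are elementary: $-S$ and $T_j-R$ are both $\mathcal{L}(4,2^2,1^9)$, while $-T_j$ and $S-R$ are cubics through ten general points.
\end{itemize}
So everything reduces to two families, $\mathcal{L}(7b+3,3b,(2b+1)^{10})$ and $\mathcal{L}(7b+7,3b+2,2b+3,(2b+2)^{9})$.

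The second family is already Cremona-reduced, since $m_1+m_2+m_3=d$. It therefore cannot be brought to your homogeneous shape $(2b+c'')^{10}$. It needs its own cutting, with a separate piece for the multiplicity $2b+3$, as in Appendix \ref{technique-prop2}.

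Both families have virtual dimension exactly $0$, so the cutting has no slack. For every $b$, each of the eleven pieces must contain exactly $\binom{m_i+1}{2}$ lattice points, with exactly $k$ of them on its $k$-th line. Writing down these two uniform partitions and checking the row-by-row counts is the actual content of the theorem.
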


\subsection{Countably many phantoms on nine-point blow-up}

Let $X$ be the blow-up of the second Hirzebruch surface $\mathbf{F}_{2}$ at $9$ points in general position.
Following the notation in \cite[Section 4]{KKL+26}, we denote $C$ the pullback to $X$ of the negative section of $\mathbf{F}_{2}$, $F$ the pullback of the fiber of $\mathbf{F}_{2}$, and $E_{1}, \cdots, E_{9}$ the exceptional divisors. Then, the canonical divisor 
$$
K_{X}=-2C-4F+\sum_{i=1}^{9} E_{i}
$$ and the intersection numbers
$$ 
C^{2}=-2,\, C\ldotp F=1,\, F^{2}=0, 
$$
$$
C\ldotp E_{i}=0,\, F\ldotp E_{i}=0, \,E_{i}^{2}=-1,\, \textrm{ and } E_{i}\ldotp E_{j}=0\; (i\neq j).
$$
Then, by Orlov's blow-up formula,
for every integer $c\geq 0$, the sequence of line bundles
\begin{equation}\label{F2-family-FEC}
\{\mathcal{O},\mathcal{O}(E_{1}),\mathcal{O}(E_{2}),\cdots,\mathcal{O}(E_{9}),\mathcal{O}(F),\mathcal{O}(C+cF),\mathcal{O}(C+(c+1)F)\}  \end{equation}
is a full exceptional collection.
In \cite{KKL+26}, Kemboi et al.  considered the involution
\begin{equation}\label{involution-Pic-F2}
\begin{array}{cccl}
  \iota: & \mathrm{Pic}(X) & \longrightarrow  & \mathrm{Pic}(X)  \\
&D&\longmapsto &
-D-2(D\cdot K_{X})K_{X}.
\end{array}
\end{equation}
Applying the involution \eqref{involution-Pic-F2} to \eqref{F2-family-FEC},
there is a numerically exceptional collection
\begin{align}\label{F2-family-EC}
& \{\CO_{X},\CO_{X}(2K_{X}-E_{1}),\cdots,\CO_{X}(2K_{X}-E_{9}),\CO_{X}(4K_{X}-F), \nonumber\\
&\;\;\;\;\; \CO_{X}(c(4K_{X}-F)-C),\CO_{X}((c+1)(4K_{X}-F)-C)\}. 
\end{align}
With computer-assisted computations in Macaulay2 as explained in \cite[Appendix B]{KKL+26}, Kemboi et al. proved in \cite[Theorem 1.2]{KKL+26} that for 
$c=2$, the sequence \eqref{F2-family-EC} is a non-full exceptional collection.
Its right orthogonal complement gives the first example of universal phantom subcategory on the blow-up of $\mathbf{F}_{2}$ at $9$ points in general position.
It is important to note that for different integers $c$, the full exceptional collections \eqref{F2-family-FEC} are mutation-equivalent to each other.
In the forthcoming paper \cite{MXY26},
applying Dumnicki's diagram-cutting method without relying on Macaulay2, we will prove the following:

\begin{thm}
For every integer $c\geq 2$, the sequence \eqref{F2-family-EC} is a non-full exceptional collection. In particular, its right orthogonal complement $\mathscr{C}_{X}^{(c)}$ is a universal phantom subcategory. Moreover, the phantom category $\mathscr{C}_{X}^{(c)}$ has a bounded $t$-structure.
\end{thm}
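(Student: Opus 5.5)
The plan follows the structure of the proof of Theorem \ref{mainthm}, transported from $\PB^{2}$ to $\mathbf{F}_{2}$. Write $N:=4K_{X}-F$, $B_{i}:=2K_{X}-E_{i}$ and $D_{c}:=cN-C$. The map \eqref{involution-Pic-F2} preserves $\chi$ and the intersection form. Since \eqref{F2-family-FEC} is a full exceptional collection, the sequence \eqref{F2-family-EC} is therefore numerically exceptional: $\chi(L_{j}\otimes L_{i}^{-1})=0$ for all $i<j$. As in Proposition \ref{EC-10pts-main-prop}, exceptionality then reduces to proving, for every $i<j$,
\[
h^{0}(L_{i}\otimes L_{j}^{-1})=0 \quad\text{and}\quad h^{0}(K_{X}+L_{j}\otimes L_{i}^{-1})=0.
\]

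\textbf{Step 1: the $\Ext^{2}$-vanishings.} I would expand each divisor $K_{X}+L_{j}\otimes L_{i}^{-1}$ in the basis $C,F,E_{k}$. I expect every one of them to meet a nef class negatively, for instance $F$ or $C+2F$ (both pullbacks of nef classes on $\mathbf{F}_{2}$). This uses $K_{X}\cdot F=-2$ and $K_{X}\cdot(C+2F)=-4$, and is a one-line intersection check in each case.

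\textbf{Step 2: the backward $\Hom$-vanishings.} The relevant divisors are $-B_{i}$, $-N$ (which also equals $D_{c}-D_{c+1}$), $-D_{c}$, $B_{i}-N$, $B_{i}-D_{c}$, and $N-D_{c}$. After multiplying out, each has the form
\[
\alpha C+\beta F-\sum_{k} m_{k}E_{k},
\]
with coefficients linear in $c$. A few of them are small; these I would settle by the same elementary arguments as in Lemma \ref{10pts-forward-hom-zero-main}, using that no curve in a small class passes through general points. The families depending on $c$ are the real content. For these I would apply Dumnicki's criterion (Proposition \ref{Dumnicki-method}) in its toric form, as indicated in Remark \ref{pf-thm1.2-rem}. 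The monomials of $\alpha C+\beta F$ on $\mathbf{F}_{2}$ correspond to the lattice points of the trapezoid
\[
\{(x,y)\mid 0\le y\le \alpha,\ 0\le x\le \beta-2y\}.
\]
I would cut this trapezoid by affine functions into nine pieces $\mathrm{D}_{1},\dots,\mathrm{D}_{9}$, modelled on the partition in the proof of Proposition \ref{10pts-NoSect-case1}. The corner triangles take care of the points of largest multiplicity. The interior pieces are bounded by lines of slopes $\pm1$, $\pm3$ and $-\tfrac13$, each piece covered by $m_{k}$ lines meeting it in $1,2,\dots,m_{k}$ points. To keep this uniform in $c$, I would use a parameter $\sigma$ to treat several families simultaneously, exactly as was done there. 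Where possible I would reduce related families to each other by the $\mathbf{F}_{2}$-analogue of Cremona transformations, namely elementary transformations at a point, which preserve $h^{0}$.

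\textbf{Step 3: conclusion.} The forward $\Hom$-vanishing $\Hom(L_{i},L_{j})=0$ for $i<j$ follows by intersecting with a nef class, plus a few direct checks such as $h^{0}(E_{i}-E_{j})=0$. This gives $\phac\ge 0$, hence $\ph\ge 2$, so the collection is not full by Lemma \ref{ht-ge-ph-nonfull}. Since the length $12$ equals $\operatorname{rk}K_{0}(X)$, Proposition \ref{prop:univ_phantom} shows that $\mathscr{C}_{X}^{(c)}$ is a universal phantom. The same forward vanishings, fed into Theorem \ref{KLP-thm} together with the equivalence $-\otimes\omega_{X}$ as in Theorem \ref{bd-t-str-ourphant}, give the bounded $t$-structure.

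\textbf{Main obstacle.} The hard step is Step 2: finding a single affine partition of the trapezoid that works uniformly for all $c\ge 2$. The restriction $c\ge 2$ should be exactly where the partition closes up, consistent with \cite[Theorem 1.2]{KKL+26} for $c=2$. I would first construct the partitions numerically for $c=2,3,4$ and then interpolate the slopes and intercepts linearly in $c$.
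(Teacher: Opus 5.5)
Your reduction is sound, and it matches what the paper does for Theorem~\ref{mainthm} and announces for this statement. The involution gives numerical exceptionality; note the vanishing is $\chi(L_i\otimes L_j^{-1})=0$ for $i<j$, not $\chi(L_j\otimes L_i^{-1})$. Every $\Ext^2$ and every forward $\Hom$ dies against the nef class $C+2F$: all $L_j\otimes L_i^{-1}$ meet it non-positively, with $0$ only for $E_k-E_l$. Hence $\ph\ge 2$, and universality and the $t$-structure follow from Proposition~\ref{prop:univ_phantom} and Theorem~\ref{KLP-thm}; the length is $13=\operatorname{rk}K_0(X)$, not $12$.

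This is only the formal part. The proposal does not prove the theorem, because the one substantive input, the backward $\Hom$-vanishing, is left as a plan. You need $h^0=0$, for all $c\ge 2$, for
\[
-D_c=(8c+1)C+17cF-4c\textstyle\sum_k E_k,\qquad N-D_c=(8c-7)C+17(c-1)F-4(c-1)\textstyle\sum_k E_k,
\]
\[
B_i-D_c=(8c-3)C+(17c-8)F-(4c-1)E_i-(4c-2)\textstyle\sum_{k\neq i}E_k ,
\]
and for the same divisors with $c+1$ in place of $c$. Each has $\chi=0$ and exactly as many lattice points in its trapezoid as imposed conditions. So any Dumnicki partition must be perfect: every piece has exactly $\binom{m_k+1}{2}$ points and satisfies the line condition. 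You exhibit no such pieces, and ``compute $c=2,3,4$ and interpolate linearly'' is a heuristic, not an argument. A proof must give explicit $c$-parametrized pieces and verify the counts uniformly, as in Proposition~\ref{10pts-NoSect-case1}. That is precisely the part the paper itself defers to \cite{MXY26}.

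Several auxiliary claims in Step 2 would also fail as stated.
\begin{itemize}
\item The $c$-independent divisors $-B_i=4C+8F-E_i-2\sum_{k\ne i}E_k$, $-N=8C+17F-4\sum_kE_k$ and $B_i-N=4C+9F-3E_i-2\sum_{k\ne i}E_k$ are not ``small'' in the sense of Lemma~\ref{10pts-forward-hom-zero-main}. For example, $H^0(\mathbf{F}_2,8C_0+17f)$ is $90$-dimensional against $90$ conditions. Only $E_l-E_k$ is trivial; the rest need the same kind of argument, or the $c=2$ computer check of \cite{KKL+26}.
\item Elementary transformations do not give reductions. At a general point you land on $\mathbf{F}_1$ with the new point on the $(-1)$-section, so the new system is no longer one at general points of $\mathbf{F}_2$.
\item The correct analogue of the quadratic Cremona is the reflection in $C+2F-E_{i_1}-E_{i_2}-E_{i_3}-E_{i_4}$. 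This is an honest Cremona once $X$ is viewed as $\PB^{2}$ blown up at $q_1$, a point $q_2$ infinitely near $q_1$, and eight general points. But it lowers none of the three families, since $\sum_{k\in I}m_k-\beta\le 1-c<0$, so each family needs its own partition.
\item The bound $c\ge 2$ is not a feature of the cutting. The divisors $-D_0=C$ and $N-D_1=C$ are effective, so exceptionality genuinely fails for $c=0,1$.
\item At $c=2$ the family $N-D_c$ changes shape: $\beta-2\alpha=c-3<0$, so $C$ is a fixed component and $h^0(N-D_2)=h^0(-N)$. That case must be handled apart from any uniform trapezoid partition.
\end{itemize}
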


On the other hand, since $H^{0}(X, \CO_{X}(C))\neq 0$, 
the sequence \eqref{F2-family-EC} fails to be exceptional when $c=0,1$. 
Consequently, there exist two mutation-equivalent full exceptional collections of line bundles on $X$, after applying the involution \eqref{involution-Pic-F2}, one becomes a non-full exceptional collection, but the other does not become an exceptional collection.
This phenomenon demonstrates that mutation-equivalence alone does not determine the behavior of a full exceptional collection under the involution \eqref{involution-Pic-F2}.
In contrast, for the blow-ups of $\PB^{2}$ at $n$ points in general position ($n=10$ or $n=11$), it is currently unknown whether an analogous phenomenon can occur.
Motivated by this observation, we formulate the following: 

\begin{quest}
Let $X$ be the blow-up of $\PB^{2}$ at $n$ points in general position, where $n=10$ or $n=11$.
Given any two mutation-equivalent full exceptional collections of line bundles on $X$,
if after applying the involution \eqref{involution-Pic} or \eqref{involution-Pic-11pts}, one becomes a non-full exceptional collection, does the same hold for the other?   
\end{quest} 

%=============================================================================================================================

\appendix 

\section{Proof of Proposition \ref{10pts-NoSect-case2}}\label{technique-prop2}

This appendix is to present the proof of Proposition \ref{10pts-NoSect-case2}.
The strategy of the proof is the same as that of Proposition \ref{10pts-NoSect-case1}.
To show Proposition \ref{10pts-NoSect-case2}, we use Proposition \ref{Dumnicki-method} to prove that 
the following linear systems 
$$
\mathscr{L}_{a}^{0}:=\mathcal{L}(13a+9,5a+2,(4a+3)^{9})
\textrm{ and }
\mathscr{L}_{a}^{1}:=\mathcal{L}(13a+16,5a+5,4a+6,(4a+5)^{8})
$$
are empty, for every integer $a\geq 0$.
To deal with the two systems simultaneously, 
we set $\sigma\in\{0,1\}$ and fix some notation:
$d=13a+9+7\sigma$,
$M=5a+2+3\sigma$,
$m=4a+3+2\sigma$
and
$q=4a+3+3\sigma$.
Then, we have $\mathscr{L}_a^{\sigma}=\mathcal{L}(d,M,q,m^{8})$.  

{\bf Step 1: Construct an affine partition.} 
For a small $0<\varepsilon<1$, we  define $9$ affine functions as follows:
\begin{align*}
f_{1}(x,y)&=-x-y+M-1+\varepsilon,&
f_{2}(x,y)&=x-d+q-1+\varepsilon,\\
f_{3}(x,y)&=y-d+m-1+\varepsilon,&
f_{4}(x,y)&=x-y-5a-4-2\sigma+\varepsilon,\\
f_{5}(x,y)&=-x+y-5a-4-3\sigma+\varepsilon,&
f_{6}(x,y)&=3x-y-15a-10-7\sigma+\varepsilon,\\
f_{7}(x,y)&=-3x+y+3a+1+\sigma
+\varepsilon,&
f_{8}(x,y)&=-x+y-a-1-\sigma+\varepsilon,\\
f_{9}(x,y)&=x+3y-15a-7-9\sigma+\varepsilon. &&
\end{align*}
Since $0<\varepsilon<1$, so no lattice point lies on one of the separating affine lines.
We denote $R_{0}:=\Delta_{d}$ and $R_{i}=R_{i-1}\cap\{(x,y)\mid f_{i}(x,y)<0\}$.    
Inductively, the diagrams $\mathrm{D}_{1},\cdots,\mathrm{D}_{10}$ are defined by
\begin{equation}\label{partition-def2}
\mathrm{D}_{i}:=R_{i-1}\cap \{(x,y)\mid f_{i}(x,y)>0\}, \; 1\leq i\leq 9,
\end{equation}
and $\mathrm{D}_{10}:=R_{9}$.  
Therefore, we get an affine partition
$\Delta_d=\mathrm{D}_1\cup\cdots\cup \mathrm{D}_{10}$; see for example, Figure \ref{cutting-diag2-figure} for $a=2$, $\sigma=0$ and $\varepsilon=0.1$.

\begin{figure}[H]
\centering
\begin{tikzpicture}[scale=0.18, font=\sffamily]
\draw[line width=0.8pt] (0, 0) -- (35.9, 0) -- (0, 35.9) -- cycle;
% f3: y = 24.9 
\draw[semithick] (0, 24.9) -- (11.0, 24.9);
% f5: y = x + 13.9 
\draw[semithick] (0, 13.9) -- (11.0, 24.9);
% f2: x = 24.9
\draw[semithick] (24.9, 0) -- (24.9, 11.0);
% f4: y = x - 13.9 
\draw[semithick] (13.9, 0) -- (24.9, 11.0);
% f1: y = -x + 11.1 
\draw[semithick] (0, 11.1) -- (11.1, 0);
% f7: y = 3x - 7.1
\draw[semithick] (4.55, 6.55) -- (10.51, 24.43);
% f8: y = x + 2.9
\draw[semithick] (5.0, 7.9) -- (16.5, 19.4);
% f6: y = 3x - 39.9
\draw[semithick] (13.3, 0) -- (18.95, 16.95);
% f9: y = -x/3 + 12.3
\draw[semithick] (7.5, 10.36) -- (15.66, 7.08);
%=================================
\node at (4, 29) {\tiny $\mathrm{D}_{\mathbf{3}}$};
\node at (5.5, 26) {\tiny $f_{3}$};
\node at (4, 21) {\tiny $\mathrm{D}_{\mathbf{5}}$};
\node at (6.3, 21.5) {\tiny $f_{5}$};
\node at (4.5, 13.6) {\tiny $\mathrm{D}_{\mathbf{7}}$};
\node at (6.7, 15.4) {\tiny $f_{7}$};
\node at (3.8, 3.8) {\tiny $\mathrm{D}_{\mathbf{1}}$};
\node at (5.6, 5) {\tiny $f_{1}$};
\node at (12, 19) {\tiny $\mathrm{D}_{\mathbf{8}}$};
\node at (10, 14.5) {\tiny $f_{8}$};
\node at (13.4, 12) {\tiny $\mathrm{D}_{\mathbf{9}}$};
\node at (11.3, 9.8) {\tiny $f_{9}$};
\node at (10, 5.8) {\tiny $\mathrm{D}_{\mathbf{10}}$};
\node at (19.5, 10) {\tiny$\mathrm{D}_{\mathbf{6}}$};
\node at (18.3, 12) {\tiny $f_{6}$};
\node at (21.5, 3.9) {\tiny $\mathrm{D}_{\mathbf{4}}$};
\node at (18, 3) {\tiny $f_{4}$};
\node at (28.5, 4) {\tiny $\mathrm{D}_{\mathbf{2}}$};
\node at (26.1, 6) {\tiny $f_{2}$};
\end{tikzpicture}
\caption{An affine partition of $\Delta_{35}$}
\label{cutting-diag2-figure}
\end{figure}

{\bf Step 2: 
Verify the conditions of Proposition \ref{Dumnicki-method}.}
We discuss the diagrams $\mathrm{D}_{1},\cdots, \mathrm{D}_{10}$ case-by-case.

To begin with, it follows directly from \eqref{partition-def2} that for the diagrams $\mathrm{D}_{1}$, $\mathrm{D}_{2}$, $\mathrm{D}_{3}$, $\mathrm{D}_{4}$ and $\mathrm{D}_{5}$, we have:
\begin{enumerate}
\item[(1)] For the diagram $\mathrm{D}_{1}$, we have
$
\displaystyle 
\mathrm{D}_{1} =\bigcup_{r=1}^{M} \{(r-1,y)\mid 0\leq y\leq M-r\}.
$
Thus, the cardinality $\# \mathrm{D}_{1}=\frac{M(M+1)}{2}$. 
We take $M$ vertical lines $\ell_{1,k}:=\{x=M-k\}$, 
$1\leq k\leq M$.
Then, the cardinality $\#(\mathrm{D}_{1}\cap \ell_{1,k})=k$, $1\leq k\leq M$.

\item[(2)] For the diagram $\mathrm{D}_{2}$,
we have
$\displaystyle 
\mathrm{D}_{2} =\bigcup_{r=1}^{q}
 \{(d-q+r,y) \mid 0\leq y\leq q-r\}.
$
Thus, the cardinality $\# \mathrm{D}_{2}=\frac{q(q+1)}{2}$.
We take $q$ vertical lines $\ell_{2,k}:=\{x=d-k+1\}$, 
$1\leq k\leq q$.
Then the cardinality $\#(\mathrm{D}_{2}\cap \ell_{2,k})=k$, $1\leq k\leq q$.

\item[(3)] For the diagram $\mathrm{D}_{3}$, 
we have
$\displaystyle 
\mathrm{D}_{3} =\bigcup_{r=1}^{m}
 \{(x,d-m+r)\mid 0\leq x\leq m-r\}.
$
Thus, the cardinality $\# \mathrm{D}_{3}=\frac{m(m+1)}{2}$.
We take $m$ horizontal lines $\ell_{3,k}:=\{y=d-k+1\}$, 
$1\leq k\leq m$.
Then, the cardinality $\#(\mathrm{D}_{3}\cap \ell_{3,k})=k$, $1\leq k\leq m$.

\item[(4)] For the diagram $\mathrm{D}_{4}$,
we have 
$\displaystyle 
\mathrm{D}_{4}=\bigcup_{r=1}^{m}
 \{(5a+3+2\sigma+r,y)\mid 0\leq y\leq r-1\}.
$
Thus, the cardinality $\# \mathrm{D}_{4}=\frac{m(m+1)}{2}$.
We take $m$ vertical lines $\ell_{4,k}:=\{x=5a+3+2\sigma+k\}$, 
$1\leq k\leq m$.
Then, the cardinality $\#(\mathrm{D}_{4}\cap \ell_{4,k})=k$, $1\leq k\leq m$.

\item[(5)] For the diagram $\mathrm{D}_{5}$, 
we have 
$\displaystyle 
\mathrm{D}_{5} =\bigcup_{r=1}^{m}
 \{(r-1,y) \mid 5a+3+3\sigma+r\leq y\leq 9a+6+5\sigma\}.
$
Thus, the cardinality $\# \mathrm{D}_{5}=\frac{m(m+1)}{2}$.
We take $m$ horizontal lines $\ell_{5,k}:=\{x=4a+3+2\sigma-k\}$, 
$1\leq k\leq m$.
Then, the cardinality $\#(\mathrm{D}_{5}\cap \ell_{5,k})=k$, $1\leq k\leq m$.
\end{enumerate}

Additionally, the discussions are the same for the diagrams $\mathrm{D}_{6}$, $\mathrm{D}_{7}$ and $\mathrm{D}_{8}$:
\begin{enumerate}
\item[(i)]For the diagram $\mathrm{D}_{6}$, the conditions of not belonging to $\mathrm{D}_{4}$, belonging to $\mathrm{D}_{6}$, and lying in $\Delta_{d}$ yield the condition:
$
x-5a-3-2\sigma\leq y\leq \min\{3x-15a-10-7\sigma,d-x\}.
$
Thus, we have 
$$ \displaystyle 
\mathrm{D}_{6}=\bigcup_{r=1}^{m}
 \{(5a+3+2\sigma+r,y) \mid r\leq y\leq\min\{3r-1-\sigma,8a+6+5\sigma-r\}\}.
$$
If $\sigma=0$, we take $m$ vertical lines as follows: 
(i) if $k$ is even, then $\ell_{6,k}:=\{x=5a+3+\frac{k}{2}\}$;
(ii) if $k$ is odd, then $\ell_{6,k}:=\{x=9a+6-\frac{k-1}{2}\}$.
If $\sigma=1$, 
we take $m$ vertical lines as follows: 
(i) if $k$ is even, then $\ell_{6,k}:=\{x=9a+11-\frac{k}{2}\}$;
(ii) if $k$ is odd, then $\ell_{6,k}:=\{x=5a+5+\frac{k+1}{2}\}$.
Thus, the cardinality $\#(\mathrm{D}_{6}\cap \ell_{6,k})=k$, $1\leq k\leq m$, and then $\# \mathrm{D}_{6}=\frac{m(m+1)}{2}$.

\item[(ii)] For the diagram $\mathrm{D}_{7}$, after simplifying the preceding inequalities, one obtains the constraint conditions:
$ 0\leq x\leq m-1$ and $\max\{M-x,3x-3a-1-\sigma\}\leq y\leq x+5a+3+3\sigma$.
Thus, we get
$$\displaystyle 
\mathrm{D}_{7}=\bigcup_{r=1}^{m}
  \{(r-1,y) \mid \max\{M-r+1,3r-3a-4-\sigma\}\leq y \leq 5t+2+3\sigma+r
 \}.
$$
We take $m$ vertical lines as follows: (i) if $k$ is even, then $\ell_{7,k}=\{x=\frac{k}{2}\}$; (ii) if $k$ is odd, then $\ell_{7,k}=\{x=m-\frac{k+1}{2}\}$.
Thus, the cardinality $\#(\mathrm{D}_{7}\cap \ell_{7,k})=k$, $1\leq k\leq m$, and then $\# \mathrm{D}_{7}=\frac{m(m+1)}{2}$.

\item[(iii)] For the diagram $\mathrm{D}_8$, the constraint conditions are
$2a+2+\sigma\leq x \leq 2a+1+m+\sigma$ and $x+a+1+\sigma\leq y \leq \min\{3x-3a-2-\sigma,d-x\}$.
Hence, we have
$$\displaystyle 
\mathrm{D}_{8}=\bigcup_{r=1}^{m}
 \left\{(2a+1+\sigma+r,y) \mid
 l_{8}(r)\leq y\leq
 u_{8}(r)
 \right\},
$$
where $l_{8}(r):=3a+2+2\sigma+r$ and $u_{8}(r):=\min\{3a+1+3r+2\sigma,11a+8+6\sigma-r\}$.
We take $m$ vertical lines as follows: (i) if $k$ is even, then $\ell_{8,k}=\{x=2a+1+\sigma+\frac{k}{2}\}$; (ii) if $k$ is odd, then $\ell_{8,k}=\{x=4a+2\sigma+3-\frac{k-1}{2}\}$.
Thus, the cardinality $\#(\mathrm{D}_{8}\cap \ell_{8,k})=k$, $1\leq k\leq m$, and $\# \mathrm{D}_{8}=\frac{m(m+1)}{2}$.
\end{enumerate}

Finally, the discussions are also the same for the last two diagrams:
\begin{enumerate}
\item[(1)]For the diagram $\mathrm{D}_{9}$,
set $x=3a+1+2\sigma+r$. The new inequality $x+3y-15a-7-9\sigma+\epsilon>0$, together
with the complements of the $\mathrm{D}_{6}$- and $\mathrm{D}_{8}$-inequalities and the boundary
of $\Delta_{d}$, gives the condition: 
$l_{9}(r) \leq y \leq u_{9}(r)$,
where 
$l_{9}(r):=\max \{
 4a+2+2\sigma-\left\lfloor\frac{r-\sigma}{3}\right\rfloor,
 3r-6a-6-\sigma \}$ and 
 $u_{9}(r):=\min\{4a+1+3\sigma+r,\,10a+8+5\sigma-r\}$.
Thus, we have
$$\displaystyle 
\mathrm{D}_{9}=\bigcup_{r=1}^{m}
 \{(3a+1+2\sigma+r,y)\mid l_9(r)\leq y\leq u_{9}(r)\}.
$$
We take $m$ vertical lines as follows:
\begin{enumerate}
\item[(i)] If $\sigma=0$, then there are four cases:
(1) if $k\equiv 0\; ({\rm mod}\; 4)$, then $\ell_{9,k}=\{x=3a+1+\frac{3}{4}k\}$;
(2) if $k\equiv 1\; ({\rm mod}\; 4)$, then $\ell_{9,k}=\{x=3a+1+\frac{3k+1}{4}\}$;
(3) if $k\equiv 2\;({\rm mod}\; 4)$, then $\ell_{9,k}=\{x=3a+1+\frac{3k+2}{4}\}$;
(4) if $k\equiv 3\; ({\rm mod}\; 4)$, then $\ell_{9,k}=\{x=3a+1+m-\frac{k-3}{4}\}$.
Thus, the cardinality$\#(\mathrm{D}_{9}\cap\ell_{9,k})=k$, $1\leq k\leq m$ and $\#\mathrm{D}_{9}=\frac{m(m+1)}{2}-1$.
\item[(ii)] If $\sigma=1$, then there exist four cases:
(1) if $k\equiv 0\; ({\rm mod}\; 4)$, then $\ell_{9,k}=\{x=3a+3+\frac{3}{4}k\}$;
(2) if $k\equiv 1\; ({\rm mod}\; 4)$, then $\ell_{9,k}=\{x=3a+3+m-\frac{k-1}{4}\}$;
(3) if $k\equiv 2\; ({\rm mod}\; 4)$, then $\ell_{9,k}=\{x=3a+3+\frac{3k-2}{4}\}$;
(4) if $k\equiv 3\; ({\rm mod}\; 4)$, then $\ell_{9,k}=\{x=3a+3+\frac{3k-1}{4}\}$.
Thus, the cardinality$\#(\mathrm{D}_{9}\cap\ell_{9,k})=k$, $1\leq k\leq m$ and $\#\mathrm{D}_{9}=\frac{m(m+1)}{2}-1$.
\end{enumerate}

\item[(2)]For the diagram $\mathrm{D}_{10}$, on the horizontal line
$y=r-\sigma$, the inequalities defining the residual piece $\mathrm{D}_{10}$
reduce to $ l_{10}(r) \leq x \leq u_{10}(r)$,
where 
$l_{10}(r):=\max\{q+1-\sigma-r,\,r-\sigma-a\}$ 
and
$u_{10}(r):=\min\left\{
 q+\left\lfloor\frac{r-\sigma}{3}\right\rfloor,
 15a+6+\sigma-3r \right\}$.
Thus, we have
$$ \displaystyle 
\mathrm{D}_{10}
=\bigcup_{r=1}^{m}
 \{(x,r-\sigma)\mid l_{10}(r)\leq x\le u_{10}(r)\}.
$$
We take $m$ horizontal lines as follows:
\begin{enumerate}
\item[(i)] If $\sigma=0$, then there are four cases:
(1) if $k\equiv 0\; ({\rm mod}\; 4)$, then $\ell_{10,k}=\{y=\frac{3}{4}k-1\}$;
(2) 
if $k=1$, then $\ell_{10,k}=\{y=m-1\}$;
if $k\equiv 1\; ({\rm mod}\; 4)$ ($k\geq 5$), then $\ell_{10,k}=\{y=\frac{3k-11}{4}\}$;
(3) if $k\equiv 2\;({\rm mod}\; 4)$, then $\ell_{10,k}=\{y=\frac{3(k-2)}{4}\}$;
(4) if $k\equiv 3\; ({\rm mod}\; 4)$, then $\ell_{10,k}=\{y=4a+1-\frac{k-3}{4}\}$.
Thus, 
in case (1), (3) and (4), we have $\#(\mathrm{D}_{10}\cap\ell_{10,k})=k$
for the corresponding value of $k$,
whereas in case (2) one has
the cardinality $\#(\mathrm{D}_{10}\cap\ell_{10,k})<k$.
It follows that $\#\mathrm{D}_{10}=\frac{m^{2}+1}{2}$.
\item[(ii)] If $\sigma=1$, then there exist four cases:
(1) if $k\equiv 0\; ({\rm mod}\; 4)$, then $\ell_{10,k}=\{y=\frac{3}{4}k-1\}$;
(2) if $k\equiv 1\; ({\rm mod}\; 4)$, then $\ell_{10,k}=\{y=m-1-\frac{k-1}{4}\}$;
(3) if $k\equiv 2\; ({\rm mod}\; 4)$, then $\ell_{10,k}=\{y=\frac{3k-2}{4}\}$;
(4) if $k\equiv 3\; ({\rm mod}\; 4)$, then $\ell_{10,k}=\{y=\frac{3k-9}{4}\}$.
Thus, 
in case (1), (2) and (3), we have $\#(\mathrm{D}_{10}\cap\ell_{10,k})=k$
for the corresponding value of $k$,
whereas in case (4) one has
the cardinality $\#(\mathrm{D}_{10}\cap\ell_{10,k})<k$.
By direct computations, we conclude that $\#\mathrm{D}_{10}=\frac{m^{2}+1}{2}$.
\end{enumerate}
\end{enumerate}
As a result, by Proposition \ref{Dumnicki-method}, we get $H^{0}(X,\CO_{X}(D))=0$.  
This completes the proof of Proposition \ref{10pts-NoSect-case2}.
 
%=====================================================================


\begin{thebibliography}{100}

\bibitem{AO13}
V. Alexeev, D. Orlov,
{\it Derived categories of Burniat surfaces and exceptional collections}, 
Math. Ann. {\bf 357} (2013), 743--759.

\bibitem{BBC+12}
T. Bauer, C. Bocci, S. Cooper, S. Di Rocco, M. Dumnicki, B. Harbourne, and K. Jabbusch, A.L. Knutsen, A. K\"{u}ronya, R. Miranda, J. Ro\'{e}, H. Schenck, T. Szemberg, and Z. Teitler, 
{\it Recent Developments and Open Problems in Linear Series}, Contributions to algebraic geometry, 93--140, EMS Ser. Congr. Rep., Eur. Math. Soc., Z\"{u}rich, 2012.

\bibitem{BGvBKS15}
C. B\"{o}hning, H.-C. Graf von Bothmer, L. Katzarkov, P. Sosna,
{\it Determinantal Barlow surfaces and phantom categories}, 
J. Eur. Math. Soc. {\bf 17} (2015), 1569--1592.

\bibitem{BGvBS13}
C. B\"{o}hning, H.-C. Graf von Bothmer, P. Sosna,
{\it On the derived category of the classical Godeaux surface}, 
Adv. Math. {\bf 243} (2014), 203--231.
 
\bibitem{Bon90}
A. Bondal, 
{\it Representations of associative algebras and coherent sheaves}, 
Math. USSR-Izv. {\bf 34} (1990), 23--42.

\bibitem{BK25}
L. Borisov and K. Kemboi, 
{\it Non-existence of phantoms on some non-generic blowups of the projective plane}, 
Proc. Amer. Math. Soc. {\bf 153} (2025), 963--968.
 
\bibitem{CL18}
Y. Cho, Y. Lee,
{\it Exceptional collections on Dolgachev surfaces associated with degenerations},
Adv. Math. {\bf 324} (2018), 394--436.

\bibitem{CM11}
C. Ciliberto, R. Miranda, 
{\it Homogeneous interpolation on ten points}, 
J. Algebraic Geom. {\bf 20} (2011), 685--726.

\bibitem{Dum07}
M. Dumnicki, 
{\it Cutting diagram method for systems of plane curves with base points}, 
Ann. Polon. Math. {\bf 90} (2007), 131--143.

\bibitem{DJ07}
M. Dumnicki, W. Jarnicki,
{\it New effective bounds on the dimension of a linear system in $\mathbb{P}^{2}$},
J. Symb. Comput. {\bf 42} (2007), 621--635.

\bibitem{GKMS15}
S. Galkin, L. Katzarkov, A. Mellit, E. Shinder,
{\it Derived categories of Keum's fake projective planes},
Adv. Math. {\bf 278} (2015), 238--253.

\bibitem{GS13}
S. Galkin, E. Shinder,
{\it Exceptional collections of line bundles on the Beauville surface},
Adv. Math. {\bf 244} (2013), 1033--1050.
 
\bibitem{GO13} 
S. Gorchinskiy,  D. Orlov, 
{\it Geometric phantom categories}, 
Publ. Math. Inst. Hautes \'{E}tudes Sci. {\bf 117} (2013), 329--349.

\bibitem{KN98}
B.V. Karpov, D.Yu. Nogin,
{\it Three-block exceptional sets on del Pezzo surfaces}, 
Izv. Math. {\bf 62} (1998), 3--38.

\bibitem{KK23}
I. Karzhemanov, L. Katzarkov,
{\it Exceptional collections and phantoms of special Dolgachev surfaces},
\href{https://arxiv.org/abs/2310.13319v1}{arXiv:2310.13319v1}.

\bibitem{KKL+26}
K. Kemboi, D. Krashen, T. Liu, Y. Liu, E. Mackall, S. Makarova, A. Perry, A. Robotis, S. Venkatesh,
{\it A looming of phantoms}, 
Adv. Math. {\bf 499} (2026), 111046.

\bibitem{Kra24}
J. Krah,
{\it A phantom on a rational surface}, 
Invent. Math. {\bf 235} (2024), 1009--1018.

\bibitem{KO95}
S. Kuleshov, D. Orlov,
{\it Exceptional sheaves on del Pezzo surfaces}, 
Russian Acad. Sci. Izv. Math. {\bf 44} (1995), 479--513.
 
\bibitem{Kuz15}
A. Kuznetsov, 
{\it Height of exceptional collections and Hochschild cohomology of quasiphantom categories},
J. Reine Angew. Math. {\bf 708} (2015), 213--243.

\bibitem{KLP26}
A. Kuznetsov, S. Liu, A. Perry,
{\it Inducing $t$-structures on semiorthogonal components},
\href{https://arxiv.org/abs/2606.26193}{arXiv:2606.26193v1}.

\bibitem{Liu25}
Y. Liu,
{\it Geometric phantom categories do not admit Noetherian $t$-structures},
\href{https://arxiv.org/abs/2503.02052}{arXiv:2503.02052v1}.

\bibitem{MXY25}
S. Ma, Y. Xiong, S. Yang,
{\it A new phantom on a rational surface}, 
\href{https://arxiv.org/abs/2511.07114v2}{arXiv:2511.07114v2}. 

\bibitem{MXY26}
S. Ma, Y. Xiong, S. Yang,
{\it Echoes of phantoms on rational surfaces II},
In preparation.

\bibitem{Ma68}
Y. Manin,
{\it Correspondences, motifs and monoidal transformations},
Math. USSR, Izv. {\bf 21} (1983), 307--340.

\bibitem{Mat25}
A. Mattoo,
{\it Objects of a phantom on a rational surface},
\href{https://arxiv.org/abs/2510.26107v1}{arXiv:2510.26107v1}.

\bibitem{Orl93}
D. Orlov, 
{\it Projective bundles, monoidal transformations, and derived categories of coherent sheaves},
Russian Acad. Sci. Izv. Math. {\bf 41} (1993), 133--141.
 
\bibitem{Pir23}
D. Pirozhkov,
{\it Admissible subcategories of del Pezzo surfaces}, 
Adv. Math. {\bf 424} (2023), 109046.
 
\bibitem{Sos20}
P. Sosna, 
{\it Some remarks on phantom categories and motives}, 
Bull. Belg. Math. Soc. Simon Stevin {\bf 27} (2020), 337--352.
 
\end{thebibliography}
\end{document}